\definecolor{DeepGreen}{RGB}{0, 100, 0}     
\definecolor{DeepGreen}{HTML}{006400}        
\newcommand{\C}{\mathbb{C}}
\newcommand{\N}{\mathbb{N}}
\newcommand{\Z}{\mathbb{Z}}
\newcommand{\NN}{\mathcal{N}}
\newcommand{\eps}{\varepsilon}
\newcommand{\diff}{\mathop{}\!\mathrm{d}}
\newcommand{\norm}[1]{\left\| #1 \right\|}
\newcommand{\pfrac}[2]{\frac{\partial #1}{\partial #2}}
\newcommand{\CC}{\mathbb{C}}
\newcommand{\Bsigma}{\mathcal{B}_\sigma}
\newtheorem{defin}{Definition}[section]
\newenvironment{definition}{\begin{defin}\rm}{\end{defin}}
\newtheorem{theorem}[defin]{Theorem}
\newtheorem{exa}[defin]{Example}
\newenvironment{example}{\begin{exa}\rm}{\end{exa}}
\newtheorem{lemma}[defin]{Lemma}
\newtheorem{corollary}[defin]{Corollary}
\newenvironment{proof}
{\noindent{\it Proof.}}{\hfill $\Box$\par\vspace{2.5mm}}
\newtheorem{que}{Question}
\newtheorem{pro}{Problem}
\numberwithin{equation}{section}
\title{\bf\Large  
On the Goldberg-Ostrovskii Problem for Linear Differential Equations with Exponential Polynomial Coefficients
}
\author{Xing-Yu Li}
\date{}
\begin{document}
	\maketitle
	\setcounter{tocdepth}{2}
	
	\begin{abstract}
	
	The Goldberg–Ostrovskii problem asks whether finite-order solutions of a linear differential equation inherit the property of completely regular growth (c.r.g.) from its coefficients. While Bergweiler’s counterexample demonstrated that the answer is negative in general, this paper proves that when the coefficients are restricted to the classical and rich subclass of exponential polynomials, the regularity transmission does hold. Thereby we affirm the conjecture posed by Heittokangas, Ishizaki, Tohge and Wen.
		 
	Our results reveal the closed nature of exponential polynomials in the context of regularity transfer from the perspective of equation dynamics, and provide a new perspective for the study of the structure of related function classes.
	
		\bigskip

		\noindent
		\textbf{Keywords:}  
		Exponential polynomials; Completely regular growth; 
		  Goldberg-Ostrovskii's problem;
		  Asymptotic analysis
		
		\medskip
		\noindent
		\textbf{2020 MSC:} 30D15, 34M10, 30D35
		
	\end{abstract}

\tableofcontents 

\section{Introduction}
  \subsection*{Completely regular growth and classic problems on equations} 
   The theory of linear differential equations in the complex domain represents one of the most profound intersections of complex analysis, operator theory, and dynamical systems. At the heart of this theory lies the fundamental problem of understanding the growth and distribution of zeros of entire function solutions. The Ukrainian mathematician B.~Ja.~Levin revealed a crucial connection through the concept of \emph{completely regular growth} (c.r.g.), which establishes a deep relationship between growth control and zero distribution on sectors.
   
   The Phragm\'en-Lindel\"of \emph{indicator function} of an entire function $f(z)$ of finite order $\rho(f)>0$ is given by
   $$
   h_f(\theta)=\limsup_{r\to\infty}r^{-\rho(f)}\log |f(re^{i\theta})|
   $$
   for $\theta\in [0,2\pi)$. We say that $f$ is of \emph{completely regular growth} (briefly denoted by c.r.g., see \cite[pp.~139-140]{Levin1} or \cite[pp.~6-8]{Ronkin}) if there exists a sequence of Euclidean discs $D(z_k,r_k)$ satisfying
   \begin{equation}\label{r.eq}
   	\sum_{|z_k|\leq r}r_k=o(r)
   \end{equation}
   such that
   \begin{equation}\label{crg}
   	\log |f(re^{i\theta})|= (h_f(\theta)+o(1))r^{\rho(f)}, \quad re^{i\theta}\not\in\bigcup_k D(z_k,r_k),
   \end{equation}
   as $r\to\infty$ uniformly in $\theta$.  A function of c.r.g. is equivalent to the regular distribution of its zeros, see  \cite[p.~158]{Levin1}.

   \bigskip
      A. A. Goldberg and I. V. Ostrovski{i} stated a classic problem in the  book \cite[p.~300]{problembook}:
   
   \bigskip
   \noindent
   \textbf{(Goldberg-Ostrovski{i})} \emph{Suppose that $f$ is a finite order transcendental solution of 
   	$$
   	f^{(n)}+a_{n-1}f^{(n-1)}+\cdots+a_1f'+a_0f=0,
   	$$
   	whose coefficients are entire functions of c.r.g.. Is $f$ also of c.r.g.?}

   \bigskip
   This problem was formulated by V.P. Petrenko in  \cite[p.~132]{Petru} but he omitted the condition of c.r.g. of the coefficients $a_j(z)$, for $1\leq j\leq n-1$. As the book was published posthumously without the author's final proofreading, this omission may have been accidental. Crucially, without this c.r.g. condition, the answer to the problem is negative, see \cite[p.~300]{problembook}.  
     Bergweiler \cite{Bergweiler} constructed a counterexample showing that additional conditions are necessary, thus motivating the search for the precise conditions under which regularity is inherited.  
    This makes us wonder, what should the ``missed condition" on coefficients be  that makes all the finite order solutions of c.r.g.?

     \bigskip
    We wish to recognize just what functions are of c.r.g., which is an important consideration on recapturing this problem. Detailed studies of functions of c.r.g. can be found in \cite{Levin1} and \cite{Ronkin}. Obviously, non-transcendental functions can not be functions of c.r.g. and $e^z$ should be the simplest one without zeros. 
    
    The combination of exponential terms and polynomials, such as $1-z+e^z$, can be also easily verified from the condition \eqref{crg}.  Mathematicians such as Poly\`a \cite{Polya20},  Schwengeler \cite{Schwengeler} and Dickson \cite{Dickson}  have found that the functions -- exponential sums, see \eqref{expsum.eq}, has a distribution of zeros that asymptotic to some rays,  i.e., just as $1-z+e^z$ does. Although they give different estimates of the asymptotic trend, the regularly distributed zero property tells us such functions are equivalently of c.r.g.. 
    
      A more general function class comparing exponential sums is known to us below.
    An \emph{exponential polynomial} is an entire function of the form 
    \begin{equation}\label{exppoly.eq1}
    	G = P_1e^{Q_1} + \cdots + P_le^{Q_l}
    \end{equation}
    where $P_j,Q_j, j=1,\ldots,l,$  are polynomials, $l\in\mathbb{N}$. It is known that exponential sums are exponential polynomials with all $Q_j$ of degree 1.
     Transcendental exponential polynomial functions are also of c.r.g., see \cite[Lemma 1.3]{GOP} for example. 
     
     \bigskip
     
     For the more complicated functions of c.r.g., it was not possible to write the explicit forms, but it was found that the exponential terms played a crucial role. To explain it clearly, we know different exponential terms control the growth of an  exponential sum in different sectors. Such as $e^{-z}$ controls the growth of $e^{-z}+e^z$ on the left-half plane and $e^z$ controls it on the right-half plane. In the area around the positive and negative imaginary axes, there will be some sudden changes in the function values to produce zeros, such a phenomenon is known as the \emph{Stokes phenomenon}, see Wasow \cite[Sec.~15]{Wasow}.
     
      Although the asymptotic analysis theory on Stokes pheonomenon does not only deals with entire functions, such as also the example -- Hankel functions -- which are the linearly independent solutions of Bessel's differential equation introduced in \cite[p.~78]{Wasow}, the well-known Airy functions $\mathrm{Ai}(z)$ and $\mathrm{Bi}(z)$, as the solutions of the Airy equation
      $$
     f''-zf=0,
      $$ has the typical Stokes phenomenon on the negative axis, see \cite[Chap.~9]{DLMF}.
      Petrenko \cite[pp.~104-112]{Petru} and Steinmetz \cite{Stein}  independently proved that the entire solutions of linear differential equations with polynomial coefficients (Steinmetz indicated this result also holds for rational coefficients) are of c.r.g.. More such entire special functions of c.r.g., e.g. generalized Airy functions $A_n(z)$ and $B_n(z)(n\in\Z)$, Bessel functions of the first kind $J_n(z)(n\in\Z)$,  and Kummer functions $M(a,b,z)(a,b\in\C)$, can be seen separately in \cite[\S9.13,~\S10.2,~\S13.2]{DLMF}. 
   
   \bigskip
   The fact that so many c.r.g. special functions (or associated forms) fit into the framework of exponential polynomials indicates that this class is a natural candidate for the study of regularity transmission—being both broad yet well-behaved. Any exponential polynomial can still be constructed to be a solution to a linear differential equation whose coefficients are polynomials, see \cite{VPT}. Heittokangas et al. are expert in exponential polynomials and they indicated the regularity that for certain LDE the class of finite order  solutions is always larger (at least not smaller) than the class of coefficients, see the survey \cite{HITW2022} on exponential polynomials. Therefore, in \cite[p.~33]{HITW2022} they proposed a relevant question  by strengthening the assumption on G.-O.'s problem.
     
        \bigskip
     \noindent
      \textbf{(H.-I.-T.-W.)}\emph{
     	 If the coefficients $a_0(z),\ldots, a_{n-1}(z)$ of \begin{equation}\label{eq.basic}
     		f^{(n)}+a_{n-1}f^{(n-1)}+\cdots+a_1f'+a_0f=0,
     	\end{equation}
     	are exponential polynomials, and if it  possesses a transcendental entire solution $f$ of finite order of growth,
     	then is it true that $f$ is of c.r.g.?}
    
      \bigskip
      Our recent work in \cite{Li-Wang-Wen} provides a partial answer to this line of inquiry, showing that for linear differential equations whose coefficients are exponential sums (a subclass of exponential polynomials) with rational frequencies, all finite-order solutions are themselves exponential sums and consequently of completely regular growth.
      
    \textbf{This paper is to give a complete answer to this problem}. We give an estimation of $\log|f|$ on every small sector except for some areas around a finite number of  rays from the origin, resulting $f$ of c.r.g.. See Theorem \ref{Thm.main}, Theorem \ref{Thm.main.Tj}, and Theorem \ref{Thm.main.exp-poly}. In summary, this paper completely proves that every finite-order transcendental entire solution of a linear differential equation with exponential polynomial coefficients is of completely regular growth. This conclusively settles the Heittokangas–Ishizaki–Tohge–Wen conjecture.

 \subsection*{Organization}
 Section 2 recalls necessary concepts: functions of c.r.g. on rays and its compactification, the asymptotic theory of ODEs in sectors by Wasow, and Steinmetz's application to LDEs.  
Section 3 presents our main results for equations with exponential sum coefficients (Theorem \ref{Thm.main}, Theorem \ref{Thm.main.Tj}) and with exponential polynomial coefficients (Theorem \ref{Thm.main.exp-poly}), including a description of the asymptotic sectors and exceptional sets. Section 4 contains the complete proofs, detailing the reduction to polynomial-coefficient equations in sectors, the analysis near critical and Stokes rays, and the final verification of the c.r.g. property via the compactness principle. 
Section 5 offers concluding remarks and outlines future directions, including the exploration of the solution space structure via operator-theoretic methods.
 Section 6 (Appendix) provides a necessary modification of Wasow's sectorial asymptotic theorem to regions with holes, which underpins the proof when dealing with exceptional sets.
 
 \section{Basic concepts and lemmas}   
\subsection{Compactification for functions of 
	c.r.g. on the rays}\label{Sec.crg}	We have given the definition of functions of completely regular growth above.
	In fact, completely regular growth is not only a global property but a local one. A set which can be covered by a sequence of discs $D(z_k,r_k)$ satisfying \eqref{r.eq}  will be called 
	a $C_0$-set. The \emph{relative measure of a set $E$ of positive numbers} is defined to be the limit
	$$
	m^*(E)=\lim\limits_{r\rightarrow\infty}\frac{mes(E_r)}{r},
	$$
	where $E_r$ denotes the intersection of the set $E$ with the interval $(0,r)$, and mes$(E_r)$ is the measure of $E_r$. 
	 It is obvious that the 
	set of those values of $r$ for which the circle $|z| = r$ intersects a $C_0$-set is of zero relative measure, which is denoted as a $E_0$-set.
	
	In the definition of c.r.g.,  the exception condition $z\notin C_0$ and   $|z|\notin E_0$ is equivalent by the reason of the correspondence of regular distribution of zeros, see \cite[Thm.~1\&2, Chap.~1]{Levin1} and \cite[Thm.4,~p.~158]{Levin1}, or \cite[Thm.~1.2.1]{Ronkin}.
	
	\bigskip
	 Refering to the definitions in \cite[p.~139]{Levin1}, 
	  a function $f$ that is holomorphic in $\{re^{i\theta}:\theta_1<\theta<\theta_2,\theta_1,\theta_2\in\mathbb{R},r>0\}$ and of order $\rho$ 
	will be called a function of \emph{completely regular growth on the ray} $\arg z=\theta$ for some $\theta\in(\theta_1,\theta_2)$  if the limit
	\begin{equation*}
		h_f(\theta)=\displaystyle\lim_{r\to\infty\atop r\notin E_\theta}\frac{\log|f(re^{i\theta})|}{r^\rho}
	\end{equation*}
	exists, where $E_\theta$ is a $E_0$-set depending on $\theta$. Denote $R_\theta=\{re^{i\theta},r\notin E_\theta,r>0\}$ for some angle $\theta$, a subset $\mathbb{M}\subset(\theta_1,\theta_2)$ and $$R_\mathbb{M}=\{re^{i\theta}:r>0,r\notin\displaystyle\bigcup_{\theta\in\mathbb{M}\subset(\theta_1,\theta_2)}E_\theta\}\subset\displaystyle\bigcup_{\theta\in\mathbb{M}\subset(\theta_1,\theta_2)}R_\theta.$$ We will say a function \emph{of completely regular growth on the set of rays}  
	$ R_{\mathbb{M}} 
$	 if 	\begin{equation*}
	 	H_{f}(r,\theta)=\frac{\log|f(re^{i\theta})|}{r^\rho},~ r\notin\displaystyle\bigcup_{\theta\in\mathbb{M}}E_\theta,
	 \end{equation*}
	 converges uniformly to $h_f(\theta)$ for all $\theta, \theta\in \mathbb{M}.$
	Additionally, Levin \cite[p.~140]{Levin1} has proved that $f(z)$ is a function of completely regular growth on 
	every ray which is a limiting ray of the set of rays along which $f(z)$ is of 
	completely regular growth.
	
	\begin{lemma}\label{lem.levin.lim.crg}
		If a holomorphic function  of growth order $\rho$ is of completely regular growth on each ray  $R_\theta$ for $\theta\in\mathbb{M}$,  then it is of completely 
		regular growth on the set of rays $R_{\overline{\mathbb{M}}}$.
	\end{lemma}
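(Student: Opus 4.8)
The plan is to first extend the hypothesis from $\mathbb{M}$ to $\overline{\mathbb{M}}$, and then to upgrade the ray‑by‑ray convergence to convergence that is uniform in the angle. For the extension, observe that each ray $\arg z=\theta$ with $\theta\in\overline{\mathbb{M}}$ is either one of the rays $R_\theta$, $\theta\in\mathbb{M}$, or a limiting ray of that family; by the result of Levin quoted above (\cite[p.~140]{Levin1}), $f$ is of completely regular growth on $R_\theta$ in both cases, so $h_f(\theta)=\lim_{r\to\infty,\,r\notin E_\theta}\log|f(re^{i\theta})|/r^\rho$ is defined for every $\theta\in\overline{\mathbb{M}}$, and it is continuous there, since it coincides with the restriction to $\overline{\mathbb{M}}$ of the trigonometrically $\rho$‑convex indicator of $f$ on the sector. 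Hence we may replace $\mathbb{M}$ by $\overline{\mathbb{M}}$, and it remains to prove that $H_f(r,\theta)\to h_f(\theta)$ uniformly for $\theta\in\mathbb{M}$ as $r\to\infty$ outside $E:=\bigcup_{\theta\in\mathbb{M}}E_\theta$.

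For the uniformity I would argue by contradiction and compactness. If it fails, there are $\varepsilon_0>0$, radii $r_k\to\infty$ with $r_k\notin E$, and $\theta_k\in\mathbb{M}$ with $|H_f(r_k,\theta_k)-h_f(\theta_k)|\ge\varepsilon_0$; passing to a subsequence, $\theta_k\to\theta_0\in\mathbb{M}$, and we may assume $\theta_k\ne\theta_0$ for all $k$, since otherwise a subsequence of the $r_k$ contradicts completely regular growth on the single ray $R_{\theta_0}$. Using the continuity of $h_f$ and the convergence along the fixed ray $R_{\theta_0}$ (valid because $r_k\to\infty$ and $r_k\notin E_{\theta_0}$), this forces the angular oscillation of $\phi\mapsto\log|f(r_ke^{i\phi})|/r_k^\rho$ between $\theta_0$ and $\theta_k$ to stay bounded below while $|\theta_k-\theta_0|\to0$. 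To contradict this I would localize in a thin sector $S_k$ whose two radial sides lie on rays $R_{\phi_k^{\pm}}$ with $\phi_k^{\pm}\in\mathbb{M}$, $\phi_k^-<\theta_k<\phi_k^+$ and $\phi_k^{\pm}\to\theta_0$ (such rays exist because $\theta_0$ is a non‑isolated point of the closed set $\mathbb{M}$), and whose radial extent is $r_k^{1-\sigma_k}<|z|<r_k^{1+\sigma_k}$ for a suitable $\sigma_k\to0$. On the radial sides, completely regular growth on $R_{\phi_k^{\pm}}$ (together with $r_k\notin E\supseteq E_{\phi_k^-}\cup E_{\phi_k^+}$ and the continuity of $h_f$) pins $\log|f|$ down to $(h_f(\theta_0)+o(1))|z|^\rho$, while the finite order of $f$ bounds $\log|f|$ on the two circular ends by $|z|^{\rho+\varepsilon}$; a Phragm\'en--Lindel\"of / harmonic‑majorant estimate in $S_k$, the harmonic measure of whose ends is made small by the choice of $\sigma_k$, then confines $\log|f(r_ke^{i\theta_k})|/r_k^\rho$ to $h_f(\theta_0)+o(1)$, the desired contradiction, and the lemma follows.

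The main obstacle I anticipate is twofold. First, the lower estimate for $\log|f(r_ke^{i\theta_k})|$ inside $S_k$: since $\log|f|$ is only subharmonic, the harmonic majorant gives the upper bound at once, but the matching lower bound must absorb a possible accumulation of zeros of $f$ near $r_ke^{i\theta_k}$; this I would handle by writing $\log|f|=v-p$ on $S_k$ with $v$ harmonic (controlled as above) and $p\ge0$ the Green potential of the zeros of $f$ in $S_k$, bounding $p(r_ke^{i\theta_k})$ by means of the estimate $n(t,f)=O(t^{\rho+\varepsilon})$ for the zero‑counting function and enlarging $E$ by a further set of radii of zero relative measure. Second — and this is the truly delicate point — the aperture $w_k=\phi_k^+-\phi_k^-$, the exponent $\sigma_k$ and $\log r_k$ must be made mutually compatible (one needs $\sigma_k\log r_k\to0$ and yet $\sigma_k\log r_k/w_k\to\infty$ and $e^{-c/\sqrt{w_k}}r_k^{\varepsilon}\to0$), and this cannot be arranged by the choice of $\sigma_k$ alone when $\mathbb{M}$ accumulates slowly at $\theta_0$; the honest remedy is to extract the required uniformity already from the proof of the limiting‑ray theorem near $\theta_0$, or, more cleanly, to route the argument through the equivalence in \cite{Levin1} between completely regular growth on a set of rays and regular (angular) distribution of the zeros in the corresponding angles, which makes all the potential‑theoretic estimates uniform over $\overline{\mathbb{M}}$ and gives the lemma directly.
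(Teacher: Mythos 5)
First, a point of reference: the paper offers no proof of this lemma at all. It is quoted from Levin \cite[p.~140]{Levin1}, where it appears as the classical theorem that the set of rays of completely regular growth of a function of finite order is closed, together with uniformity of the convergence of $\log|f(re^{i\theta})|/r^{\rho}$ to $h_f(\theta)$ on closed subsets of that set. So the only question is whether your sketch would constitute an independent proof of Levin's theorem, and as written it does not; the obstacles you flag at the end are not technicalities but the substance of the result.

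The gap is concentrated in the thin-sector step. For the upper bound your Phragm\'en--Lindel\"of comparison is essentially the trigonometric convexity of the indicator and is fine, but the argument needs, on the radial sides $R_{\phi_k^{\pm}}$, the estimate $\log|f(te^{i\phi_k^{\pm}})|=(h_f(\theta_0)+o(1))t^{\rho}$ for $t\asymp r_k$ with an error small \emph{uniformly in $k$}. The hypothesis only gives a rate of convergence depending on the individual ray, and the rays $\phi_k^{\pm}$ are chosen jointly with the radii $r_k$; a uniform rate over such a family is exactly the conclusion of the lemma, so the argument is circular. If instead you fix finitely many bracketing rays first and then let $r\to\infty$, the aperture $w$ stays bounded below and your own bookkeeping shows the requirements $\sigma_k\log r_k\to 0$ and $\sigma_k\log r_k/w\to\infty$ are then incompatible. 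Independently of this, the lower bound at $r_ke^{i\theta_k}$ cannot be closed the way you propose: writing $\log|f|=v-p$ with $p$ the Green potential of the zeros, the crude bound $n(t,1/f)=O(t^{\rho+\varepsilon})$ only controls the \emph{total} Riesz mass in the sector, which is already of the order of the main term $r_k^{\rho}$; to show $p(r_ke^{i\theta_k})=o(r_k^{\rho})$ off a small exceptional set one needs the angular equidistribution of the zeros, i.e., Levin's equivalence between c.r.g.\ on a set of rays and regular distribution of zeros. Your proposed ``remedy'' of routing the argument through that equivalence is therefore not a repair of the sketch but a replacement of it by Levin's own proof, at which point one should simply cite \cite[p.~140]{Levin1} --- which is precisely what the paper does.
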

It follows from this, in particular, that an entire function of completely 
regular growth on rays that form an everywhere dense set is a function of completely regular growth on the whole plane.

	\subsection{The asymptotic solutions of differential equations in a sector}\label{Sec.asym}
		Considering the general differential equation
	\begin{equation}\label{lde}
		f^{(n)}(z)+a_{n-1}(z)f^{(n-1)}(z)+\cdots +a_1(z)f'(z)+a_0(z)f(z)=0
	\end{equation}
	with entire coefficients $a_0(z),\ldots,a_{n-1}(z)$,
	Petrenko \cite[pp.~104--112]{Petru} has shown that transcendental
	solutions to linear differential equations 	
	\eqref{lde} with polynomial
	coefficients are of completely regular growth. 
	Steinmetz \cite[Theorem 1]{Stein} has proved a more generalized theorem in the following. The differential equation
	\begin{equation}\label{lde.p}
		f^{(n)}(z)+R_{n-1}(z)f^{(n-1)}(z)+\cdots +R_1(z)f'(z)+R_0(z)f(z)=0
	\end{equation}
	allows rational coefficients $R_0(z),\ldots,R_{n-1}(z)$. And it is only considered the solutions of \eqref{lde.p} are  meromorphic functions. 

	\begin{lemma}[Steinmetz]\label{Thm1.stein}
		Let $f(z)$ be a non-rational  meromorphic solution of \eqref{lde.p} and let $\arg z =\theta $ be an arbitrary 
		direction. Then there is $p\in\mathbb{N}$, $h>0$, and a polynomial $Q$ in $z^{1/p}$ such that
		\begin{equation}\label{eq.log|f|}
			\log |f(z)|= \Re Q(z^{1/p})+ O(\log |z|)
		\end{equation}
		as $z\rightarrow\infty$ in $\theta\leq\arg z\leq \theta+h,$ possibly outside an exceptional set consisting of
		\begin{enumerate}
			\item
			(countably many) disks $|z-z_\mu|<|z_\mu|^{1-\varepsilon},$
			where $\varepsilon$ is positive and the counting function of the sequence \{$z_\mu$\} is $O(\log r)$;
			\item
			a logarithmic semi-strip $0\leq\arg z-\theta<\dfrac{\log^+|z|}{|z|^{\frac{1}{p}}}$.
		\end{enumerate}
		The latter occurs only for some finite numbers of rays. 
	\end{lemma}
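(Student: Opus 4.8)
The plan is to derive the estimate from the classical asymptotic integration of linear differential equations at the irregular singular point $z=\infty$, followed by a dominance analysis adapted to the prescribed sector. First I would recall the structure of a fundamental system of \eqref{lde.p} near infinity: substituting $w=f'/f$ into \eqref{lde.p} gives, to leading order, the algebraic equation $w^{n}+R_{n-1}(z)w^{n-1}+\cdots+R_{1}(z)w+R_{0}(z)=0$, whose Newton--Puiseux analysis (carried out in \cite{Stein}) produces branches $y_{v}(z)=\gamma_{v}z^{\mu_{v}}(1+o(1))$ near infinity, and integrating gives exponential parts $P_{v}(z)=\tfrac{\gamma_{v}}{\mu_{v}+1}z^{\mu_{v}+1}+\cdots$ when $\mu_{v}>-1$ and $P_{v}=0$ otherwise. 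After a substitution $z=\zeta^{p}$ clearing fractional powers --- and, when several branches agree to high order, admitting logarithms --- one obtains a fundamental system $f_{1},\dots,f_{n}$ with $f_{v}(z)\sim e^{P_{v}(z)}z^{\sigma_{v}}(\log z)^{m_{v}}\bigl(c_{v,0}+c_{v,1}z^{-1/p}+\cdots\bigr)$, $c_{v,0}\neq0$, each $P_{v}$ a polynomial in $z^{1/p}$. The decisive analytic input, which I would take from \cite{Wasow}, is that these are \emph{genuine} (not merely formal) sectorial asymptotics, valid uniformly on every closed sector of sufficiently small opening; in particular on $S=\{\theta\le\arg z\le\theta+h\}$ once $h>0$ is chosen small and $p$ large.

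Next I would fix $\arg z=\theta$, write the given solution as $f=c_{1}f_{1}+\cdots+c_{n}f_{n}$ in $S$, and then shrink $h$ so that the half-open sector $(\theta,\theta+h]$ meets none of the finitely many \emph{critical directions} $\vp$ --- those along which $\Re\bigl(P_{v}(re^{i\vp})-P_{\mu}(re^{i\vp})\bigr)$ stays bounded for some pair $v\neq\mu$ with $c_{v}c_{\mu}\neq0$. On the open sector $\theta<\arg z<\theta+h$ there is then a unique group $V$ of surviving indices sharing one exponential part $e^{Q(z)}$, with $Q$ a polynomial in $z^{1/p}$, that strictly dominates the rest, i.e.\ $\Re\bigl(Q(z)-P_{\mu}(z)\bigr)\to+\infty$ for every $\mu\notin V$ with $c_{\mu}\neq0$. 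Dividing out $e^{Q(z)}$ leaves $f(z)e^{-Q(z)}=g(z)+o(1)$, where $g$ is a nonzero algebraic--logarithmic expression $g=\sum_{j}c_{j}z^{s_{j}}\log^{k_{j}}z$ assembled from the surviving coefficients $c_{v}c_{v,0}$, $v\in V$. Taking absolute values and logarithms then yields $\log|f(z)|=\Re Q(z^{1/p})+O(\log|z|)$ as $z\to\infty$ in $S$, away from the zeros and poles of $f$, which is \eqref{eq.log|f|}.

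It then remains to describe the exceptional set. The poles of $f$ can accumulate only at the finitely many (bounded) poles of $R_{0},\dots,R_{n-1}$, so they cost just finitely many disks. In the open sector the factor $g$ is bounded away from $0$ except near its zeros; but in the variable $u=\log z$ the sum $g$ becomes $\sum_{j}c_{j}u^{k_{j}}e^{s_{j}u}$, an exponential polynomial on the horizontal half-strip $\theta\le\Im u\le\theta+h$, $\Re u\ge0$, so its zeros with $\Re u\le T$ number $O(T)$ --- that is, $g$ has only $O(\log r)$ zeros in $|z|\le r$, and the true zeros of $f$, displaced from those of $g$ by only $O(|z|^{1-1/p})$, are caught inside disks $|z-z_{\mu}|<|z_{\mu}|^{1-\veps}$ with $\veps>0$ small. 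Finally, if $\theta$ itself is one of the finitely many critical directions, then two groups with distinct dominant exponentials $e^{P_{v}},e^{P_{k}}$ and $\Re(P_{v}-P_{k})$ bounded along $\arg z=\theta$ compete near that ray, where $f$ may carry $O(r)$ zeros crowding it; since $\Re\bigl(P_{v}(z)-P_{k}(z)\bigr)$ acquires a definite sign once $\arg z-\theta$ exceeds a fixed multiple of $|z|^{-1/p}\log|z|$ (after enlarging $p$ if necessary), all such zeros lie in the logarithmic semi-strip $0\le\arg z-\theta<\log^{+}|z|/|z|^{1/p}$, needed only for finitely many $\theta$.

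The step I expect to be the main obstacle is the passage from \emph{formal} to \emph{genuine} sectorial asymptotics, together with the bookkeeping forced by coinciding (confluent) exponents $P_{v}$ --- the source of the $(\log z)^{m_{v}}$ factors --- since this is exactly where the sectorial normalization results of \cite{Wasow} must be applied with care, and where one must verify that the remainder estimates remain uniform right up to the bounding ray $\arg z=\theta$. A secondary difficulty is quantifying how fast $\Re(P_{v}-P_{k})$ separates from a bounded value as one moves off a critical direction, since this is what pins the width of the semi-strip to $|z|^{-1/p}\log|z|$ and the count of the remaining ``interior'' zeros to $O(\log r)$.
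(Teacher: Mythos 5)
The paper does not actually prove this lemma; it is quoted as Steinmetz's Theorem~1 from \cite{Stein}, and your reconstruction follows exactly the argument Steinmetz gives there (and which the paper itself gestures at when defining Stokes rays): sectorial asymptotic integration via the Newton--Puiseux branches $y_v=\gamma_vz^{\mu_v}(1+o(1))$ and Wasow-type genuine asymptotics, selection of the dominant exponential part $Q$ off the finitely many critical directions, reduction of the exceptional-disk count to the $O(\log r)$ zeros of an exponential sum in $u=\log z$, and confinement of the remaining zeros near a Stokes ray to the semi-strip of width $\log^+|z|/|z|^{1/p}$. Your outline is correct and contains all the essential ingredients of the cited proof.
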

	
	This kind of exceptional rays  are called  \emph{Stokes rays} of $f(z).$
	Concerning the definition of Stokes ray, one can refer to Steinmetz \cite{Stein}, and the general definition in Wasow \cite[Sec.~15]{Wasow}. 
	In proving our results, our goal is not to assume anymore that the coefficients of the equation \eqref{lde.p} are rational functions, but to assume that the coefficients have an asymptotic expansion at $\infty$ in a sector $S$.
	From this point of view, it is necessary to define the asymptotic symbol $\sim$.
	We write
	$$
	f(z)\sim C_0+\frac{C_1}{z}+\frac{C_2}{z^2}+\cdots,
	$$
	when
	$$
	\lim_{z\to\infty}\left(f(z)-C_0-\frac{C_1}{z}-\frac{C_2}{z^2}-\cdots-\frac{C_m}{z^m}\right)z^m=0,
	$$ for every positive integer $m\geq0$, or using the symbol $O(\cdot)$ to denote,
	$$
	f(z)=C_0+\frac{C_1}{z}+\frac{C_2}{z^2}+\cdots+\frac{C_m}{z^m}+O\left(\frac{1}{z^{m+1}}\right).
	$$
	
	A linear system which has a pole at the origin written in the matrix expression is 
	\begin{equation}\label{matrix.pole0}
		x^h\frac{d\bm Y}{dx}=\bm B(x)\bm Y,\enspace h\in\mathbb{N},
	\end{equation}
	where $\bm Y,\bm B$ are $n$-by-$n$ matrixes, $n\in\mathbb{N^*}$, and the matrix $\bm B(x)$ is holomorphic at $x=0$. When $h>1,$ the singular point is irregular. We want to make a transformation $z=1/x$ taking into the equation \eqref{matrix.pole0}, affording
	\begin{equation}\label{matrix.infinity}
		z^{-d}\frac{d\bm Y}{dz}=\bm A(z)\bm Y,
	\end{equation}
	with $d=h-2$ and $\bm A(z)=-\bm B(1/x)$. The integer $d+1$ is called the \emph{rank} of the singular point.  Then for $n$-by-$n$ constant matrixes $\bm A_r$, the  matrix $\bm A(z)$ could have its expansion with form
	\begin{equation}\label{ep.A.infinity}
		\bm A(z)=\sum_{j=0}^{\infty}\bm A_jz^{-j},\quad |z|>z_0, 
	\end{equation}
	which is holomorphic at $z=\infty$, or an expansion of this form
	asymptotic to $\bm A(z)$ in some sector $S$, i.e.
	\begin{equation}\label{asym.A(x)}
		\bm A(z)\sim\sum_{j=0}^{\infty}\bm A_jz^{-j},\enspace z\in S.	
	\end{equation} 
	We pay more interest to the latter case \eqref{asym.A(x)} and wonder if there exists an asymptotic expression of the solution $\bm Y$ in the same area. 
	The matrix solution in the sector $S$ is 
	summarized in the following theorem, see \cite[p.111]{Wasow}:
	\begin{lemma}\label{thm.Wasow}
		Let $\bm A(z)$ be an $n$-by-$n$ matrix function, which is holomorphic and has the asymptotic form \eqref{asym.A(x)} in $S$. Then, the differential equation \eqref{matrix.infinity} possesses a fundamental matrix solution of the form
		\begin{equation}\label{matrix.sol}
			\bm Y(z)=\hat{\bm Y}(z)z^{\bm G}e^{\bm Q(z)}
		\end{equation}
		corresponding to every sufficiently narrow subsector of $S$. Here $\hat{\bm Y}(z)$ permits an asymptotic series in power of $z^{-1/p},p\in\mathbb{N^*}$, in this subsector as $z\rightarrow\infty;$ $\bm G$ is a constant matrix, $\bm Q(z)$ is a diagonal matrix whose diagonal elements are polynomials in $z^{1/p}$.
	\end{lemma}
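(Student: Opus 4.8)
The plan is to follow the classical two-stage treatment of an irregular singular point of rank $d+1$: first to reduce the system \eqref{matrix.infinity} \emph{formally} to a canonical block form for which a fundamental matrix can be written down explicitly, and then to \emph{realize} the resulting formal fundamental solution by a genuine holomorphic one on each sufficiently narrow subsector of $S$. This is precisely the Hukuhara--Turrittin--Levelt theory, and for the complete technical details I would appeal to Wasow's monograph \cite{Wasow}; below I only describe the skeleton.

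For the formal part I would work in the differential ring of formal series in $x^{-1/p}$ and look for a transformation $\bm Y=\hat{\bm T}(x)\bm Z$ with $\hat{\bm T}(x)\sim\sum_{j\ge0}\bm T_jx^{-j/p}$, $\det\bm T_0\ne0$ (the symbol $\sim$ being the one fixed above), that carries \eqref{matrix.infinity} into a system whose coefficient matrix splits into smaller diagonal blocks. The engine is the splitting/shearing lemma: when the leading matrix $\bm A_0$ has at least two distinct eigenvalues, the coefficients $\bm T_j$ are found recursively by solving at each order a Sylvester equation $\bm A_0^{(1)}\bm X-\bm X\bm A_0^{(2)}=(\text{already determined})$, uniquely solvable exactly because the spectra of the blocks $\bm A_0^{(1)},\bm A_0^{(2)}$ are disjoint. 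If $\bm A_0$ has a single eigenvalue one first subtracts it, then applies a shearing $\operatorname{diag}(1,x^{-1/p},x^{-2/p},\dots)$ and, if still needed, a ramification $x\mapsto x^{1/p}$, so as to create a new leading coefficient with distinct eigenvalues; iterating this while inducting on the rank (Turrittin's theorem) one reaches a formal fundamental matrix of the asserted shape $\hat{\bm Y}(x)\,x^{\bm G}e^{\bm Q(x)}$, with $\bm Q$ diagonal and polynomial in $x^{1/p}$ and $\bm G$ constant.

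For the analytic part I would invoke the main asymptotic existence theorem: given the formal series $\hat{\bm Y}(x)$ just produced, on every sector of sufficiently small opening there is an actual matrix $\hat{\bm Y}(x)$, holomorphic for large $|x|$ and asymptotic to that series, for which $\bm Y(x)=\hat{\bm Y}(x)x^{\bm G}e^{\bm Q(x)}$ genuinely solves \eqref{matrix.infinity}; this comes from rewriting \eqref{matrix.infinity} as an integral equation integrated along suitably chosen rays and solving it by successive approximations, the estimates being uniform only when the subsector is narrow enough that no two of the exponentials $e^{q_i(x)-q_j(x)}$ interchange dominance inside it. That narrowness, and the fact that neighbouring subsectors generally demand different realizations of $\hat{\bm Y}$, is the Stokes phenomenon and cannot be removed. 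The main obstacle in the whole scheme is the degenerate case of the formal reduction, where $\bm A_0$ — or the leading term produced after a shearing — fails to have distinct eigenvalues: then the reduction does not terminate in one pass and one must interleave shearings, ramifications and an induction on the rank, all the while controlling that the ramification index $p$ stays finite and the polynomial entries of $\bm Q$ stay of bounded degree. Since this is exactly Turrittin's theorem as developed in \cite{Wasow}, I would cite it rather than reproduce the bookkeeping.
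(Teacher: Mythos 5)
Your proposal is correct and in essence coincides with what the paper does: the paper states this lemma as a direct citation of Wasow \cite[p.~111]{Wasow} without reproducing a proof, and your sketch is an accurate outline of precisely that source's argument (formal reduction via the splitting/shearing lemmas and Turrittin's theorem, followed by the asymptotic existence theorem on sufficiently narrow subsectors). Nothing in your outline conflicts with the statement, and deferring the bookkeeping of the degenerate shearing/ramification cases to \cite{Wasow} is exactly what the paper itself does.
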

	
	\bigskip
	
	Suppose a differential equation 
	\begin{equation}\label{eq.A.coeff.}
		f^{(n)}(z)+A_{n-1}(z)f^{(n-1)}(z)+\cdots +A_1(z)f'(z)+A_0(z)f(z)=0
	\end{equation}
	possesses coefficients $A_k(z)$ with the asymptotic expansions	$$ A_k(z)\sim z^{d_k}\sum_{j=0}^{\infty} A_{kj}z^{-j},\enspace z\in S,A_{kj}\in\mathbb{C},d_k\in\mathbb{N},k=0,\cdots,n-1.$$
	If we transform the \eqref{eq.A.coeff.} into a linear system,  setting $\bm y=\{f,\ldots,f^{(n-1)}\}^T,$ where $T$ represents the transpose of matrix, and $\tilde{\bm A}$ as the companion matrix
	\begin{equation*}
		\tilde{\bm A}=\begin{pmatrix}
			0&1&&&\\
			&0&1&&\\
			& &\ddots&\ddots&\\
			&&&0&1\\
			-A_{0}&-A_{1}&\cdots&-A_{n-2}&-A_{n-1}
		\end{pmatrix},
	\end{equation*}
	it  follows that \eqref{eq.A.coeff.} satifies the linear system $$z^{-d}\frac{d \bm y}{dz}=\bm A(z)\bm y, \enspace z\in S,$$
	with $z^d\bm A=\tilde{\bm A}$ and integer $d=\max\limits_{k=0,\ldots,n-1}\{d_k\}$. Therefore, we transform equation \eqref{eq.A.coeff.} to a linear system \eqref{matrix.infinity} with $\bm A$ in the form of \eqref{asym.A(x)} after set $\bm Y=\{\bm y_1,\ldots,\bm y_n\}$, $\bm y_i=\{f_i,\ldots,f^{(n-1)}_i\}^T,$ where $f_i$ are $n$ linearly independent solutions of \eqref{eq.A.coeff.}, $i=1,\ldots,n$. The elements of $\bm A_j$ in \eqref{asym.A(x)} consisted of some of the $A_{kj},0,1$, and $k=0,\ldots,n-1, j\in\mathbb{N}.$
	
	By Lemma \ref{thm.Wasow} and  fundamental calculations of elements of Matrixes, the $n$ linear independent solutions $f_i$ in $z$ in $S$ are 
	\begin{equation}\label{asym.form.sector}
		f_i(z)=e^{\varLambda_{i}(z^{\frac{1}{p}})}z^{c_{i}}\varOmega_{i}(z,\log z),\quad 1\leq i\leq n.
	\end{equation}
	Here $\varLambda_{i}$ is a polynomial in $z^{1/p}$, $p\in\mathbb{N^*}$, $c_i\in\mathbb{C}$, and $\varOmega_i$ is a polynomial in $\log z$ with coefficients $\beta_{i,j}(z)$ possessing asymptotic forms
	$$
	\beta_{i,j}(z)\sim\sum_{s=0}^{\infty}\alpha_{i,j,s}z^{-s/p}, \quad\alpha_s\in\mathbb{C},1\leq i\leq n,0\leq j\leq n-1.
	$$
	
	Repeating the proof of Lemma \ref{Thm1.stein} given by Steinmetz \cite{Stein}, we get the following lemma:
	
	\begin{lemma}\label{lem.stein.modify}
		Let $f(z)$ be any solution of \eqref{eq.A.coeff.} in the sector $S$ 
		and let $\arg z =\theta $ be an arbitrary 
		direction in the interior of $S$. Then there is $p\in\mathbb{N}, h>0,$ a polynomial $Q_\theta$ in $z^{1/p}$,  such that
		\begin{equation}\label{eq.log|f|.sector}
			\log |f(z)|= \Re Q_\theta(z^{1/p})+ O(\log |z|)
		\end{equation}
		as $z\rightarrow\infty$ in $\theta\leq\arg z\leq \theta+h$ possibly outside an exceptional set consisting of
		\begin{enumerate}
			\item
			(countably many) disks $|z-z_\mu|<|z_\mu|^{1-\varepsilon},$
			where $\varepsilon$ is positive and the counting function of the sequence \{$z_\mu$\} is $O(\log r)$;
			\item
			a logarithmic semi-strip $0\leq\arg z-\theta<\dfrac{\log^+|z|}{|z|^{\frac{1}{p}}}$.
		\end{enumerate}
		The latter occurs only if $\arg z=\theta$ is a Stokes ray in $S$. 
	\end{lemma}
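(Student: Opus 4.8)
The plan is to repeat Steinmetz's proof of Lemma~\ref{Thm1.stein} in \cite{Stein} almost verbatim, the one substantial change being that the fundamental system he uses near $\infty$ is replaced by the sectorial one \eqref{asym.form.sector}; what has to be checked is that every estimate in that proof involves the coefficients only through their asymptotic behaviour at $\infty$, which is exactly the hypothesis $A_k(x)\sim x^{d_k}\sum_{j=0}^{\infty}A_{kj}x^{-j}$ on $S$. First I fix $\arg x=\theta$ in the interior of $S$ and pick $\delta>0$ so small that $\theta-\delta\le\arg x\le\theta+\delta$ lies in $S$. By Lemma~\ref{thm.Wasow} and the reduction leading to \eqref{asym.form.sector}, in a subsector containing this one there is a fundamental system $f_1,\dots,f_n$ with $f_i(x)=e^{\varLambda_i(x^{1/p})}x^{c_i}\varOmega_i(x,\log x)$, the $\varOmega_i$ being polynomials in $\log x$ whose coefficients are asymptotic to power series in $x^{-1/p}$; write the given solution as $f=c_1f_1+\dots+c_nf_n$ and, after relabelling, assume $c_1,\dots,c_m\neq0$ and $c_{m+1}=\dots=c_n=0$.

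Group $1,\dots,m$ by the value of the polynomial $\varLambda_i$. There are only finitely many \emph{Stokes directions}, namely those $\theta$ for which $\Re(\varLambda_i-\varLambda_j)(re^{i\theta})$ fails to tend to $\pm\infty$ as $r\to\infty$ for some pair of distinct blocks, because there are finitely many pairs and the leading term of $\varLambda_i-\varLambda_j$ is a monomial in $x^{1/p}$. Assume first that $\theta$ is not a Stokes direction; then, shrinking $\delta$ if need be, a unique block with representative $v$ strictly dominates, i.e.\ $\Re(\varLambda_v-\varLambda_i)(x^{1/p})\to+\infty$ as $x\to\infty$ in $\theta-\delta\le\arg x\le\theta+\delta$ whenever $i$ lies in another block. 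Factoring out $e^{\varLambda_v(x^{1/p})}$,
$$
f(x)=e^{\varLambda_v(x^{1/p})}\bigl(G(x)+E(x)\bigr),\qquad G(x)=\sum_{i:\,\varLambda_i=\varLambda_v}c_i\,x^{c_i}\varOmega_i(x,\log x),
$$
with $E(x)$ exponentially small compared with $G(x)$. Up to a factor $x^{c}$, where $\Re c$ is largest among the $c_i$ in this block, $G(x)$ differs by an $x^{-1/p}$-small term from a nonzero finite combination of $x^{i\gamma}\log^{k}x$ ($\gamma\in\R$), so $\log|G(x)|=\Re(c\log x)+O(\log\log|x|)=O(\log|x|)$ wherever $|G(x)|$ is not too small; since $E/G\to0$, there $\log|f(x)|=\Re\varLambda_v(x^{1/p})+O(\log|x|)$, and one may take $Q_\theta=\varLambda_v$.

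It remains to deal with the points where $|f|$ is small and, separately, with the Stokes directions. Exactly as in \cite{Stein}, a Jensen-type count in the thin sector $\theta\le\arg x\le\theta+\delta$ bounds the number of zeros of $G$ --- hence of $f$ --- of modulus $\le r$ there by $O(\log r)$, since after removing $x^{c}$ the function $G$ is a small perturbation of a finite combination of $x^{i\gamma}\log^{k}x$; enclosing the $\mu$-th such zero in $|x-x_\mu|<|x_\mu|^{1-\varepsilon}$ and running a Borel--Carath\'eodory-type minimum-modulus estimate for $G$ off these disks gives \eqref{eq.log|f|.sector}, with no semi-strip needed. If instead $\theta$ is one of the finitely many Stokes directions, the argument above still applies on $(\theta,\theta+h]$, where a single block dominates, except that the strict separation $\Re(\varLambda_v-\varLambda_k)(x^{1/p})\to+\infty$ from the nearest competing block $k$ sets in only once $\arg x-\theta$ exceeds $\log^{+}|x|/|x|^{1/p}$ (because $\varLambda_v-\varLambda_k$ is a polynomial in $x^{1/p}$), so the logarithmic semi-strip $0\le\arg x-\theta<\log^{+}|x|/|x|^{1/p}$ must be excluded.

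The genuine obstacle is to confirm that Steinmetz's estimates survive the weakening from rational (hence convergent) coefficients to merely asymptotic ones on a sector: since the $\hat{\bm Y}$ of Lemma~\ref{thm.Wasow} is only asymptotic, the $f_i$ are available only as $f_i(x)\sim e^{\varLambda_i(x^{1/p})}x^{c_i}\varOmega_i(x,\log x)$ and not as exact products, so one must check that the residual error is always dominated by the strict separation $\Re(\varLambda_v-\varLambda_i)(x^{1/p})\to+\infty$ away from Stokes directions and hence absorbed into the $O(\log|x|)$ term; moreover the zero-counting and minimum-modulus arguments must be run inside a thin sector of $S$ rather than in a full punctured neighbourhood of $\infty$, which is what dictates how small $h$ and $\delta$ must be taken relative to the Stokes structure of the $\varLambda_i$.
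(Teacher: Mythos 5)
Your proposal follows essentially the same route as the paper, which proves this lemma in a single sentence by invoking ``repeating the proof of Lemma~\ref{Thm1.stein} given by Steinmetz'' with the sectorial fundamental system \eqref{asym.form.sector} supplied by Lemma~\ref{thm.Wasow}; you carry out exactly that substitution and in fact supply considerably more detail (the block decomposition by the $\varLambda_i$, the Stokes-direction dichotomy, the zero count, and the honest flagging of the issue that $\hat{\bm Y}$ is only asymptotic) than the paper itself does.
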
  
	\section{Main results on solutions of c.r.g.}\label{Sec.result}
	\subsection{LDE with exponential sums coefficients}\label{Sec.LDE.exp-sum}
	An \emph{exponential sum} is an entire
	function of the form
	\begin{equation}\label{expsum.eq}
		 F_1(z)e^{\lambda_1z} + \cdots + F_m(z)e^{\lambda_mz},
	\end{equation}
	where $\lambda_1,\ldots,\lambda_m\in\C$ are distinct constants called  \emph{leading coefficients} (or \emph{frequencies}) of the exponential sum \eqref{expsum.eq},
	and the coefficients $F_1(z),\ldots,F_m(z)$ are polynomials.
	
	Firstly, we consider the linear differential equation 
	 \begin{equation}\label{expdiff.eq}
		f^{(n)}(z)+A_{n-1}(z)f^{(n-1)}(z)+\cdots+A_1(z)f'(z)+A_0(z)f(z)=0
	\end{equation}
	with exponential sums coefficients $A_i(z)$ for $i=0,1,\ldots,n-1$ of the form \eqref{expsum.eq}. The equation \eqref{expdiff.eq} can be transformed into a normal form
	\begin{equation}\label{standard.eq}
		\sum_{j=1}^{N_0}e^{\lambda_j z}\left(a_{n,j}(z)f^{(n)}(z)+a_{n-1,j}(z)f^{(n-1)}(z)+\cdots +a_{0,j}(z)f(z)\right)=0,
	\end{equation}
    where $a_{n,j}(z)\in\{0,1\}$, $a_{n-1,j}(z),\ldots,a_{0,j}(z)$ are polynomials, and $\{\lambda_j\}$ is the set of all the mutually different leading coefficients  of $A_{0}(z),\ldots,A_{n-1}(z)$. 
    $$a_{n,j}(z)f^{(n)}(z)+a_{n-1,j}(z)f^{(n-1)}(z)+\cdots +a_{0,j}(z)f(z)=0 $$ are called \emph{coefficient differential equations} about $\lambda_j$ (or at $\overline{\lambda}_j$) individually for $j=1,\ldots,N_0$. Set $W=\left\{\overline\lambda_j\right\}$, the \emph{convex hull} $co(W)$ of $W$ is defined as the intersection of all closed  convex sets  containing $W$, also to be said as the convex hull to the differential equation \eqref{expdiff.eq}.
    Suppose the vertexes of $co(W)$ consist a set $\tilde{W}=\{\overline\omega_j\}(j=1,\ldots,N_0')$ with elements arranged anticlockwise, $N_0'\leq N_0$, and denote $s_j$ the segment between $\overline\omega_j$ and $\overline\omega_{j+1}$. The \emph{critical ray} $\arg z=\eta_j$ is defined as the ray originated at 0 with the direction of the outer normal to $s_j$. 
    For small enough $\varepsilon>0,$ we define
    \begin{equation*}
    	S_j(\varepsilon)=\{z|\eta_{j-1}+\varepsilon\leq\arg z\leq\eta_j-\varepsilon\},\enspace	T_j(\varepsilon)=\{z|\eta_{j}-\varepsilon<\arg z<\eta_j+\varepsilon\}
    \end{equation*}
    and 
    \begin{equation*}
    	\overline{S}_j=\{z|\eta_{j-1}\leq\arg z\leq\eta_j\}.
    \end{equation*}
   See Figure \ref{fig.convex.hull.1}.

  \begin{figure}[h]
  	\begin{tikzpicture}
  		\draw[<->](5.5,0)--(0,0)--(0,5.3);
  		\draw(-5,0)--(0,0)--(0,-3);
  		\draw[ -](0.5,1)--(1.5,4)--(4,4.5)--(5,1)--(3.5,-1)--(0.5,1)  node at (0.5,0.6){$\overline{\omega}_{j+1}$} node at (1.5,4.3){$\overline{\omega}_{j}$}
  		node at (4,4.8){$\overline{\omega}_{j-1}$}
  		node at (5.4,1.0){$\overline{\omega}_{1}$}
  		node at
  		(3.5,-1.3){$\overline{\omega}_{N_0'}$}
  		node at
  		(3,2.2){${\tilde{W}}$};
  		\draw[domain=-5:0] plot(\x,-0.25*\x) node at (-5.3,1.5){$\arg z=\eta_j$};
  		\draw[dashed,domain=-4.8:0] plot(\x,-0.4*\x);
  		\draw[dashed,domain=-5:0] plot(\x,-0.08*\x);
  		\draw[<->]	(-2,0.5)	arc(170:155:1)
  		node at (-2.3,0.75){$\varepsilon$};
  		\draw[<->]	(-2,0.5)	arc(170:175:4)
  		node at (-2.4,0.35){$\varepsilon$};
  		\draw[dashed,domain=-1.6:0] plot(\x,-3*\x);
  		\draw [<->]	(-0.4,2)	arc(110:117:2)
  		node at (-0.6,2.4){$\varepsilon$};
  		\draw [<->]	(-1,3)	arc(110:155:3.5)
  		node at (-2.3,2.8){$S_j(\varepsilon)$};
  		\draw [<->]	(-0.9,4.5)	arc(110:161.5:5.5)
  		node at (-3,4){$\overline{S}_j$};
  		\draw [<->]	(-3.5,1.4)	arc(150:172:3)
  		node at (-4.3,0.7){$T_j(\varepsilon)$};
  		\draw[domain=-1:0,smooth] plot(\x,-5*\x) node at (-1,5.6){$\arg z=\eta_{j-1}$};
  		\draw[domain=0:1.5] plot(3.5*\x,\x) node at (5.5,1.5){$\eta_1$};
  		\draw[domain=-0.8:0] plot(2*\x,3*\x) node at (-1.6,-2.6){$\eta_{j+1}$};
  		\draw[domain=0:1.6] plot(2*\x,-1.5*\x) node at (3.6,-2.6){$\eta_{N_0'}$};
  	\end{tikzpicture}
  \caption{$\tilde{W},\arg z=\eta_j, S_j(\varepsilon),T_j(\varepsilon),\overline{S}_j $}
  \label{fig.convex.hull.1}
  \end{figure}

	\begin{lemma}\label{lem.convex.Sj}
		If $\overline\lambda\in{W},$ then for every $z\in S_j(\varepsilon)$,
		\begin{equation*}
			\Re((\omega_j-\lambda)z)\geq|z|\cdot|\omega_j-\lambda|\cdot|\sin\varepsilon|.
		\end{equation*}
	
	If a given exponential point $\overline{\lambda}_k\notin s_j$, then for a given number $\psi_\varepsilon> 0$  with $\psi_\varepsilon\neq 0$, such that for every $z\in \overline{S}_j\cap T_j(\theta)$ the inequality
	\begin{equation*}
		\Re((\omega_j-\lambda_k)z)\geq|z|\cdot|\omega_j-\lambda_k|\cdot|\sin\psi_\varepsilon|
	\end{equation*}
    is valid.
    
    If a given exponential point $\overline{\lambda}_k\notin s_j$, then for a given number $\psi_\varepsilon> 0$  with $\psi_\varepsilon\neq 0$, such that for every $z\in \overline{S}_{j+1}\cap T_j(\theta)$ the inequality
    \begin{equation*}
    	\Re((\omega_j-\lambda_k)z)\geq|z|\cdot|\omega_{j+1}-\lambda_k|\cdot|\sin\psi_\varepsilon|
    \end{equation*}
    is valid.
	\end{lemma}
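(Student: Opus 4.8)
\emph{Approach.} All three inequalities are statements about the support geometry of the convex polygon $P:=co(W)$. The only computation needed is the translation: for any frequency $\mu$ and $z=re^{i\theta}$ one has $\Re(\mu z)=r\,\Re(\mu e^{i\theta})$, and $\Re(\mu e^{i\theta})$ is exactly the orthogonal projection of the exponential point $\overline\mu$, viewed as a vector in $\mathbb R^2$, onto the unit vector $e^{i\theta}$; hence
$$\Re\big((\omega_j-\lambda)z\big)=r\,\langle\,\overline\omega_j-\overline\lambda,\ e^{i\theta}\,\rangle,\qquad |\overline\omega_j-\overline\lambda|=|\omega_j-\lambda|,$$
where $\langle\cdot,\cdot\rangle$ is the real inner product. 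The structural fact I would record first is that, by the very definition of the critical rays, $e^{i\eta_{j-1}}$ and $e^{i\eta_j}$ are the outer normals of the two edges of $P$ meeting at the vertex $\overline\omega_j$, so the set of directions $e^{i\theta}$ in which $\overline\omega_j$ is the (unique, for $\theta$ in the open range) maximizer of $\langle\,\cdot\,,e^{i\theta}\rangle$ over $P$ is the closed arc $\eta_{j-1}\le\theta\le\eta_j$, of angular length equal to the exterior angle of $P$ at $\overline\omega_j$ and hence $<\pi$. Since, for a nonzero vector $w$, the directions on which $w$ has positive projection form an open arc of length $\pi$, and that arc contains $(\eta_{j-1},\eta_j)$ when $w=\overline\omega_j-\overline\mu$ with $\overline\mu\in P\setminus\{\overline\omega_j\}$, comparing endpoints yields
$$\eta_j-\tfrac\pi2\ \le\ \arg(\overline\omega_j-\overline\mu)\ \le\ \eta_{j-1}+\tfrac\pi2\qquad(\overline\mu\in P,\ \overline\mu\neq\overline\omega_j).$$
The first assertion now drops out: taking $\overline\mu=\overline\lambda$ and $\beta=\arg(\overline\omega_j-\overline\lambda)$, the displayed bound gives $|\theta-\beta|\le\tfrac\pi2-\varepsilon$ throughout $[\eta_{j-1}+\varepsilon,\eta_j-\varepsilon]$, so $\Re((\omega_j-\lambda)z)=r|\omega_j-\lambda|\cos(\theta-\beta)\ge|z|\,|\omega_j-\lambda|\sin\varepsilon$ (the case $\overline\lambda=\overline\omega_j$ being trivial).

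For the remaining two assertions I would add the observation that, $e^{i\eta_j}$ being the outer normal of the edge $s_j$, the face of $P$ in direction $e^{i\eta_j}$ is exactly $s_j=[\overline\omega_j,\overline\omega_{j+1}]$; consequently $\overline\lambda_k\notin s_j$ forces $\langle\overline\omega_j-\overline\lambda_k,e^{i\eta_j}\rangle>0$ and also $\langle\overline\omega_{j+1}-\overline\lambda_k,e^{i\eta_j}\rangle=\langle\overline\omega_j-\overline\lambda_k,e^{i\eta_j}\rangle>0$, i.e. both $\arg(\overline\omega_j-\overline\lambda_k)$ and $\arg(\overline\omega_{j+1}-\overline\lambda_k)$ lie strictly within $\tfrac\pi2$ of $\eta_j$; since $W$ is finite there is one $\delta\in(0,\tfrac\pi2)$ making this into ``within $\tfrac\pi2-\delta$'' uniformly over the relevant $k$. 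Shrinking the small parameter $\varepsilon$ so that $\varepsilon\le\delta/2$, every $z\in T_j(\varepsilon)$ then satisfies $|\arg z-\arg(\overline\omega_j-\overline\lambda_k)|\le\tfrac\pi2-\tfrac\delta2$ and likewise with $\overline\omega_{j+1}$ in place of $\overline\omega_j$; the former is exactly the second assertion, with $\psi_\varepsilon=\delta/2$. For the third, on $\overline S_{j+1}\cap T_j(\varepsilon)$ I write $\overline\omega_j-\overline\lambda_k=(\overline\omega_{j+1}-\overline\lambda_k)+(\overline\omega_j-\overline\omega_{j+1})$ and use that $\overline\omega_j-\overline\omega_{j+1}$ lies along the edge $s_j$, hence is orthogonal to $e^{i\eta_j}$, so $|\langle\overline\omega_j-\overline\omega_{j+1},e^{i\theta}\rangle|=|\omega_j-\omega_{j+1}|\,|\sin(\arg z-\eta_j)|\le|\omega_j-\omega_{j+1}|\,\varepsilon$, giving $\Re((\omega_j-\lambda_k)z)\ge r|\omega_{j+1}-\lambda_k|\sin\tfrac\delta2-r|\omega_j-\omega_{j+1}|\,\varepsilon$; as $\overline\lambda_k\notin s_j$ implies $\overline\lambda_k\neq\overline\omega_{j+1}$, the factor $|\omega_{j+1}-\lambda_k|$ is positive, and one final shrinking of $\varepsilon$ (finitely many $k$) makes the error term at most half the main term, whence $\Re((\omega_j-\lambda_k)z)\ge|z|\,|\omega_{j+1}-\lambda_k|\sin\psi_\varepsilon$ for any $\psi_\varepsilon$ with $\sin\psi_\varepsilon=\tfrac12\sin\tfrac\delta2$.

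There is no deep obstacle here; the content is entirely the normal-cone and support-face description of the vertices of $P$. The one place that needs care is the passage to the displayed angular bound: one has to fix angular representatives (say $\eta_{j-1}<\eta_j<\eta_{j-1}+2\pi$) and check that each arc in play has length $<\pi$, so that ``arc contained in arc'' legitimately produces the endpoint inequalities; and in the third assertion one should perform the two successive shrinkings of $\varepsilon$ — first to control $|\arg z-\eta_j|$, then to absorb the edge-parallel error term — in that order. Finally it is worth a sentence that if $co(W)$ degenerates to a point there are no critical rays and nothing to prove, while if it degenerates to a segment the same argument applies with the endpoint normal ``cones'' being half-planes.
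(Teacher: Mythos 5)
Your proposal is correct. For this lemma the paper itself supplies no argument at all --- it simply writes ``See Dickson [Lemma 1]'' and defers to that thesis --- so there is nothing internal to compare against; what you have done is reconstruct a complete, self-contained proof from the normal-cone and support-face description of $\co(W)$, and the reconstruction holds up. The key translation $\Re((\omega_j-\lambda)z)=|z|\,\langle\overline\omega_j-\overline\lambda,e^{i\arg z}\rangle$ is exactly why the paper works with the \emph{conjugated} frequencies; your identification of $[\eta_{j-1},\eta_j]$ as the normal cone at $\overline\omega_j$ (of width the exterior angle, hence $<\pi$) gives the endpoint bounds $\eta_j-\tfrac\pi2\le\arg(\overline\omega_j-\overline\lambda)\le\eta_{j-1}+\tfrac\pi2$, from which the first inequality with constant $\sin\varepsilon$ follows, including the boundary case $\overline\lambda\in s_j\setminus\{\overline\omega_j\}$ where the left endpoint bound is attained. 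Your handling of the third inequality is worth highlighting: as printed it is genuinely asymmetric ($\omega_j$ on the left, $|\omega_{j+1}-\lambda_k|$ on the right), and your decomposition $\overline\omega_j-\overline\lambda_k=(\overline\omega_{j+1}-\overline\lambda_k)+(\overline\omega_j-\overline\omega_{j+1})$, with the edge-parallel term orthogonal to $e^{i\eta_j}$ and hence of size $O(\varepsilon)$ on $T_j(\varepsilon)$, proves the statement as literally written rather than the symmetric variant one might have guessed was intended. Two harmless liberties you take, both consistent with the paper's standing hypothesis ``for small enough $\varepsilon>0$'': you read the evident typos $T_j(\theta)$ as $T_j(\varepsilon)$ and ``for a given number $\psi_\varepsilon>0$'' as an existence claim, and your constants $\psi_\varepsilon$ are produced only after finitely many shrinkings of $\varepsilon$ (uniform over the finitely many $\overline\lambda_k\notin s_j$), which is exactly how the lemma is used later. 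No gap.
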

	See Dickson \cite[Lemma, 1]{Dickson}.

	\bigskip
    Our conclusions are as follows.
    
	\begin{theorem}\label{Thm.main}
		 Every finite order trancendental entire   solution $f$ of \eqref{expdiff.eq} satisfies
		\begin{equation*}
		\log|f(z)|=\Re G_{j\theta}(z^{1/p_j})+O(\log|z|),
		\end{equation*}
		 	for some polynomial $G_{j\theta}$ in $z^{1/p_j}$, $p_j\in\mathbb{N^*},j=1,\ldots,N_0'$, outside a $r$-value set $E$ of finite linear measure, in $\theta\leq\arg z\leq\theta+h, \theta\in(0,2\pi]$, sufficiently small $h>0$, and enough large $|z|$, besides two kinds of areas:
		\begin{enumerate}
			\item
			$z$ in $T_j(\varepsilon)$, $\varepsilon>0$;
			\item
			 the logarithmic semi-strips 
			\begin{equation}\label{log.strip}
				 0\leq \arg z-\xi_{jk}<\frac{\log^+|z|}{{|z|}^\frac{1}{p_j}}, k, p\in\mathbb{N^*}
			\end{equation}
			 in $S_j(\varepsilon)$, where $\arg z=\xi_{jk}$ are the Stokes rays to the coefficient differential equations at $\overline\omega_j$ in $S_j$,  
		\end{enumerate}
		  $j=1,\ldots,N_0'$ with $N_0'$ the number of sides to the convex hull $co(W)$.
		%
	\end{theorem}
	\noindent \textbf{Note.}
	\begin{enumerate}
		\item
		 The polynomial $G_{j,\theta}$ will change according to the rotation of $\arg z=\theta$ from one side of $S_j$ to the other inside $S_j$, but this change occurs only when $\arg z=\theta$ is a Stokes ray in $S_j, j=1,\ldots,N_0'$.
		 \item
		Steinmetz \cite{Stein} indicated us that the exceptional set in the form of logarithmic semi-strips does not affect c.r.g., since this strip does not contain any ray except the boundary ray, the Stokes ray. Namely, if there exists a $\theta_0>0$, and outside the strip  \eqref{log.strip} $f$ satisfies c.r.g. on the set of rays $$R_\mathbb{M}=\{re^{i\theta}:r>0,r\notin\displaystyle\bigcup_{\theta\in\mathbb{M}\subset(\xi_{jk},\xi_{jk}+\theta_0)}E_\theta\}\subset\displaystyle\bigcup_{\theta\in\mathbb{M}\subset(\xi_{jk},\xi_{jk}+\theta_0)}R_\theta,$$
        where $R_\theta=\{re^{i\theta},r\notin E_\theta,r>0\}$,  $E_\theta$ is a $E_0$-set depending on $\theta$ (see the definitions in Section \ref{Sec.crg}), then $f$ is still of c.r.g. on any ray $\theta\in\mathbb{M}\subset\left(\xi_{jk},\xi_{jk}+\theta_0\right)$. Then, by the compactification of  $\left(\xi_{jk},\xi_{jk}+\theta_0\right)$, Lemma \ref{lem.levin.lim.crg}, $f$ is of c.r.g. on $R_{[\xi_{jk},\xi_{jk}+\theta_0]}$. 
        \item
        To obtain an estimate of $\log|f|$ on $T_j(\varepsilon)$, we need to perform a more detailed characterization of the convex hull $co(W)$ of $W=\{\overline{\lambda}_j\}$ to \eqref{standard.eq}. Since $T_j(\varepsilon)$ is a narrow sector between $S_j(\varepsilon)$ and $S_{j+1}(\varepsilon)$, and each $S_j(\varepsilon)$ is dominated by the vertex $\overline{\omega}_j$, it is natural to consider whether $\log|f|$ in $T_j(\varepsilon)$ is dominated by both vertices $\overline{\omega}_j$ and $\overline{\omega}_{j+1}.$ Here we brief introduce, $\log|f|$ in $T_j(\varepsilon)$ might be dominated by all the elements of $W$ on the side $s_j$ between $\overline{\omega}_j$ and $\overline{\omega}_{j+1}$.
	\end{enumerate} 
	
	\bigskip
	In fact,  \cite[Lem.~8]{Jurgen} inspires us the degeneration rules of the coefficients $A_{m}(z)$, $m=0,\ldots,n-1$, to \eqref{expdiff.eq} in $T_j(\varepsilon)$. From the following conclusion, it can be seen that only the conjugate indicators  which are on the boundary $s_j$ control the growth of these coefficients $A_m(z)$ in $T_j(\varepsilon)$.
	
	\begin{corollary}\label{Cor.normal-form.Tj}
		The normal form \eqref{standard.eq} of \eqref{expdiff.eq} in $T_j(\varepsilon)$ has an asymptotic form of
		\begin{equation}\label{eq.asym.Tj}
			\sum_{\overline{\lambda}_k\in s_j}e^{\lambda_k z}
					\sum_{m=0}^{n}c_{m,k}z^{d_{m,k}}(1+\varepsilon(z))f^{(m)}(z)
				=0,
		\end{equation}
	where $c_{m,k}z^{d_{m,k}}$ is the highest-order term of the polynomial $a_{m,k}(z)$ in \eqref{standard.eq}. The side $s_j$ of $co(W)$ has at least two boundary points $\overline{\omega}_j$ and $\overline{\omega}_{j+1}.$
	\end{corollary}  
	 

Consider $\overline{\lambda}_1,\ldots,\overline{\lambda}_{K}\in s_j$ with $j$ fixed. 
	  Let $\tau_{k,m}=\overline{\lambda}_{k}+id_{m,k}e^{i\eta_j}$, where $d_{m,k}$ is the degree of $a_{m,k}(z)$, $\arg z=\eta_{j}$ is the critical ray inside $T_j(\varepsilon)$
	  . Let $\Omega_j$ be the smallest closed convex set containing $\overline{\omega}_j,\overline{\omega}_{j+1}$ and all $\tau_{k,m}$. Then, without generality, we assume the vertices of $\Omega_j$ is anticloskwise set by $\tilde\omega_{1},\ldots,\tilde\omega_{K'}$,  $\overline{\omega}_{j+1},$ and $\overline{\omega}_{j}$, 
	    where $\tilde\omega_{k}=\overline{\lambda}_{k}+i\max_m\{d_{m,k}\}e^{i\eta_j}$,  $\overline{\omega}_j=\overline{\lambda}_{1}$, $\overline{\omega}_{j+1}=\overline{\lambda}_{K'}$,   and $2\leq K'\leq K.$ 
	  Set the slope of the segment $\tilde{s}_k=[\tilde{\omega}_k,\tilde{\omega}_{k+1}]$ as $$\mu_k=\frac{\max_m\{d_{m,k}\}-\max_m\{d_{m,k+1}\}}{|\overline{\lambda}_{k}-\overline{\lambda}_{k+1}|},$$ and two strips
	  \begin{equation*}
	  	\begin{split}
	  		U_{1}=&\{z|\Im(ze^{-i\eta_j})\geq0,\Re(ze^{-i\eta_j})+\mu_1\log|z|\geq \delta\},\\
	  			U_{K'}=&\{z|\Im(ze^{-i\eta_j})\geq0,\Re(ze^{-i\eta_j})+\mu_{K'}\log|z|\leq - \delta\},
	  	\end{split}
	  \end{equation*}
	  for $\delta>0$. Since when $z\to\infty$, $\C\backslash(U_1\cup U_{K'})$ cannot cover any sector or ray (except for $\arg z=\eta_j$), it is sufficient for us to consider the growth of functions in $T_j(\varepsilon)\cap U_1$ and $T_j(\varepsilon)\cap U_{K'}$ to get c.r.g. in $T_j(\varepsilon)$,  outside an exceptional set that does not contain any sector. For example, we consider a trivial case -- if $K'=2$ and $\mu_1=0$ (More specifically, all $d_{m,k}$ are zero for $\overline{\lambda}_k\in s_j$), then $T_j(\varepsilon)\backslash(U_1\cup U_{2})$ is a  strip parallel to the critical ray $\arg z=\eta_j$ and having it as its axis of symmetry, with a width of $2H$.  A typical conclusion is given by \cite[Lemma~10]{Dickson}, applied to the result below.
	  
	  \begin{corollary}\label{Cor.U1.UK'}
	  	Given the symbols as defined above.
	  	 Let $z\in U_1.$ 	 
	  	If $\tau_{p,m}\notin \tilde{s}_1$, there exists some $v>0$, then $$|z^{d_{m,p}}e^{\lambda_{p}z}|<|z|^{-v}\cdot \left|z^{\max_m\{d_{m,1}\}}e^{\omega_{j}z}\right|.$$
	  		If $\tau_{p,m}\in \tilde{s}_1$, then $$|z^{d_{m,p}}e^{\lambda_{p}z}|\leq|z^{\max_m\{d_{m,1}\}}e^{\omega_{j}z}|\cdot \exp{(-\delta|\lambda_p-\omega_{j}|)}.$$
	  		
	  		It is similar to the case $z\in U_{K'}$ by substituting $d_{m,1}$, $\omega_{j}$, and $\tilde{s}_1$ into $d_{m,K'}$,  $\omega_{j+1}$, and $\tilde{s}_{K'-1}$ individually.
	  \end{corollary}
  
  By Corollary \ref{Cor.normal-form.Tj} and \ref{Cor.U1.UK'}, we can deal with the case $z\in T_j(\varepsilon)$ in Theorem \ref{Thm.main}.
  
  \bigskip
  
	\begin{theorem}\label{Thm.main.Tj}
	All  finite order transcendental entire solutions of \eqref{expdiff.eq} are of c.r.g..	
	\end{theorem}

	 \bigskip

	\begin{example}\label{ex.exp-sum.1}
		The function $f(z)=e^{-\frac{4}{3}z}(1-7e^{z})$ solves the differential equation
		$$
		f'''+3e^zf''+\left(-\frac{4}{3}-2e^z\right)f'-\left(e^z-\frac{16}{27}\right)f=0.
		$$
		The equation can be transformed into its normal form
		$$
		\left(f'''-\frac{4}{3}f'+\frac{16}{27}f\right)+e^z\left(3f''-2f'-f\right)=0.
		$$
		Consider the  differential polynomials in $g$, 
		$$G_1(g)=\frac{g'''}{g}-\frac{4}{3}\frac{g'}{g}+\frac{16}{27},\enspace\text G_2(g)=3\frac{g''}{g}-2\frac{g'}{g}-1,$$
		and the equations $G_1=0,G_2=0.$
		When we take $f(z)=e^{-\frac{4}{3}z}(1-7e^{z})$ into $G_1$ and $G_2$ individually,
		we find 
		$$G_1(f(z))=1-\frac{1}{1-7e^z}\rightarrow0,\enspace z\in S_0(\varepsilon)=\{z|\pi/2+\varepsilon\leq\arg z\leq 3\pi/2-\varepsilon\},$$
		and
		$$G_2(f(z))=\frac{7}{1-7e^z}\rightarrow0,\enspace z\in S_1 (\varepsilon)=\{z|-\pi/2+\varepsilon\leq\arg z\leq \pi/2-\varepsilon\}.$$
		They are both $o\left(\frac{1}{z^N}\right)$ for $N\in\mathbb{N}$.
	\end{example}
    
    \begin{example}\label{ex.expsum.2}
    	The differential equation
    	\begin{equation}\label{eq.exam.2}
    		4ze^{iz}f'''+(6e^{iz}+4ize^{iz}-z)f''+\left((2i-1)e^{iz}-\frac{1}{2}\right)f'-\left(ie^{iz}-\frac{1}{4}\right)f=0
    	\end{equation}
    is given. The normal form of \eqref{ex.expsum.2} is 
    \begin{equation*}
    	e^{iz}\{4zf'''+(6+4iz)f''+(2i-1)f'-if\}+\left(-zf''-\frac{1}{2}f'+\frac{1}{4}f\right)=0. 
    \end{equation*}
    The solution $f=\cosh\sqrt{z}$ of \eqref{eq.exam.2} solves both  equations
     \begin{equation*}
    	4zf'''+(6+4iz)f''+(2i-1)f'-if=0,\quad -zf''-\frac{1}{2}f'+\frac{1}{4}f=0. 
    \end{equation*}
    \end{example}
	
	\subsection{LDE with exponential polynomials coefficients}\label{subsec.e-p.coeff}
 For an exponential polynomial \eqref{exppoly.eq1}, at least one of the degrees of the polynomials $Q_j,j=1,\ldots,l,$ is bigger than one, or it is an exponential sum or a polynomial. Set $s=\max\{\deg (Q_j)\}\geq 1.$ An exponential polynomial can be rewritten in the normalized form
	\begin{equation}\label{exppoly.eq2}
		G(z)=\hat{G_1}(z)+\sum_{j=2}^{k}\hat{G}_{j}(z)\exp\left({q_{j}z^s}\right)
	\end{equation} 
	where $k\in\mathbb{N}, $ the $q_j$ are pairwise distinct non-zero constants for $2\leq j\leq k$, and $ q_1=0;$ the coefficients $\hat{G_j}(z)$ are exponential polynomials of growth order $\rho(\hat{G_j})\leq s-1$ such that $\hat{G_j}(z)\not\equiv0,$ for $2\leq j\leq {k},$ and ${k}\leq l.$
	
	\bigskip
	Now, we step forward to deal with the equation with exponential polynomials coefficients.
		A differential equation 
		\begin{equation}\label{lde.exp-poly}
			f^{(n)}(z)+G_{n-1}(z)f^{(n-1)}(z)+\cdots +G_1(z)f'(z)+G_0(z)f(z)=0
		\end{equation}
		with the exponential polynomials coefficients	
		\begin{equation}\label{coef.exp-poly}
			G_m(z)=\sum_{t=1}^{l_m}P_{m,t}(z)\exp(Q_{m,t}(z)),\enspace0\leq m\leq n-1,\enspace l_m\in\mathbb{N},
		\end{equation}
		where $P_{m,t}$ and $Q_{m,t}$ are polynomials, can be transformed into the equation
		\begin{equation}\label{eq.exp-poly.coef.}
			\sum_{j=1}^{k}\exp\left(q_{j}z^s\right)
			\left\{\sum_{m=0}^{n}
			\hat{G}_{m,j}(z)f^{(m)}(z)\right\}=0,
		\end{equation}
		where $k$ is the maximal number of pairwise different complex number   $q_{j},$ appearing in all coefficients of $z^s$ in $Q_{m,t}$, $q_1=0$ and $s=\max\{\deg(Q_{m,t})\}$. Thereby, the degree of every polynomial $\hat{G}_{m,j}(z)$ is smaller or equal to $s-1.$
		
		The complex numbers ${q_j}$ are called \emph{leading coefficients} to the degree $s$. The differential equation 
		\begin{equation}\label{coeff.eq}
		\sum_{m=0}^{n}
		\hat{G}_{m,j}(z)f^{(m)}(z)=0
		\end{equation}
			is called a \emph{coefficient differential equation} at the conjugated leading coefficient $\overline{q}_{j}$ of \eqref{lde.exp-poly}.
			
			 To describe logically, we denote the convex hull to equation \eqref{eq.exp-poly.coef.} as $ co(W)$,  conjugated leading coefficients as  $\overline q_{j_1},j_1=1,\ldots,k$; denote the $r_1$ many vertexes of $co(W)$ as $\overline\omega_{j_1},1\leq j_1\leq r_1\leq k,$ in counterclockwise order and critical rays $\arg z=\eta_{j_1},$  defined as the ray originated at 0 with the direction of the outer normal to the segment between $\overline\omega_{j_1}$ and $\overline\omega_{j_1+1}$; denote sectors $S_{j_1}(\varepsilon),T_{j_1}(\varepsilon)$ as
		 \begin{equation*}
		 	S_{j_1}(\varepsilon)=\{z|\eta_{j_1-1}+\varepsilon\leq\arg z\leq\eta_{j_1}-\varepsilon\},\enspace	T_{j_1}(\varepsilon)=\{z|\eta_{j_1}-\varepsilon<\arg z<\eta_{j_1}+\varepsilon\},
		 \end{equation*} $1\leq j_1\leq r_1\leq k$.
	 
	  \begin{figure}[h]
	 	\ovalbox{
	 		\begin{tikzpicture}   
	 			[thick,scale=0.57, every node/.style={scale=0.93}]
	 			
	 			\node {$q_1$}
	 			child [grow=right] {node  {} edge from parent[draw=none]
	 				child [grow=right] {node  {} edge from parent[draw=none]
	 					child [grow=right] {node  {} edge from parent[draw=none]
	 						child [grow=right] {node  {} edge from parent[draw=none]
	 							child [grow=right] {
	 								node {$\cdots$}
	 								child [grow=down] {
	 									node (Bs-1) {}
	 									edge from parent[draw=none]
	 									child [grow=down] {
	 										node (Bs-2) {}
	 										edge from parent[draw=none]
	 										child [grow=down] {
	 											node (Bs-3) {$\Downarrow$}
	 											edge from parent[draw=none]	
	 											child [grow=down] {
	 												node  {$q_{\underline{j_1,\ldots,j_{t-1}}}$}
	 												edge from parent[draw=none]
	 												child  {
	 													node(P)  {$q_{\underline{j_1,\ldots,j_{t-1}},1}$}
	 													edge from parent	
	 												}
	 												child [missing] {}
	 												child [missing] {}
	 												child [missing] {} 		
	 												child  {
	 													node(Q)  {$q_{\underline{j_1,\ldots,j_{t-1}},k_{\underline{j_1,\ldots,j_{t-1}}}}$}
	 													edge from parent	
	 												}		
	 											}		
	 										}
	 									}
	 								}	
	 								edge from parent[draw=none]} 
	 							child [grow=right] {node  {} edge from parent[draw=none]
	 								child [grow=right] {node  {} edge from parent[draw=none]
	 									child [grow=right] {node  {} edge from parent[draw=none]
	 										child [grow=right] {node  {} edge from parent[draw=none]
	 											child [grow=right] {node  {} edge from parent[draw=none]
	 												child  {node {$q_k$} edge from parent[draw=none] [grow=down] 
	 													child  {node(c) {$q_{k,1}$}
	 														child {node(a) {$q_{k,1,1}$}
	 														}
	 														child [missing] {}        	
	 														child {node (b) {$q_{k,1,n(k,1)}$}
	 														}
	 													}
	 													child [missing] {}
	 													child [missing] {}
	 													child [missing] {} 
	 													child  { node(d) {$q_{\underline{k,n(k)
	 															}}$}[grow=down]
	 														child {node(e) {$q_{\underline{k,n(k)},1}$}
	 														}
	 														child [missing] {}
	 														child {node(f) {$q_{\underline{k,n(k)},{{\cdots}}}$}
	 															child [grow=down] {
	 																node  {}
	 																edge from parent[draw=none]
	 																child [grow=down] {
	 																	node  {}
	 																	edge from parent[draw=none]	
	 																	child [grow=down] {
	 																		node  {}
	 																		edge from parent[draw=none]	
	 																	}		
	 																}	
	 																child [grow=down] {
	 																	node  {}
	 																	edge from parent[draw=none]	
	 																}		
	 															}	
	 														}
	 													} 
	 												}
	 											}    
	 										}    
	 									}    
	 								}    
	 							}
	 						}    
	 					}    
	 				}	
	 			}
	 			child {node(C) {$q_{1,1}$}
	 				child {node(A) {$q_{1,1,1}$}
	 					child [grow=left] {node (s-2) {$e^{qz^{s-2}}:\quad$} edge from parent[draw=none]
	 						child [grow=down] {
	 							node  {$\vdots$}
	 							edge from parent[draw=none]	
	 							child [grow=down] {
	 								node  {$e^{qz^{s-j+2}}:$}
	 								edge from parent[draw=none]
	 								child [grow=down] {
	 									node  {$e^{qz^{s-j+1}}:$}
	 									edge from parent[draw=none]	
	 								}	
	 							}
	 						}
	 						child [grow=up] {node (s-1) {$e^{qz^{s-1}}:\quad$} edge from parent[draw=none]
	 							child [grow=up] {node (s) {$e^{qz^{s}}:\quad$} edge from parent[draw=none]}	}	 
	 					}	
	 				}
	 				child [missing] {}        	
	 				child {node (B) {$q_{1,1,k_{1,1}}$}
	 				}
	 			}    
	 			child [missing] {}    
	 			child [missing] {}    
	 			child [missing] {}        
	 			child { node(D) {$q_{1,k_1}$}
	 				child {node(E) {$q_{1,k_1,1}$}
	 				}
	 				child [missing] {}
	 				child {node(F) {$q_{1,k_1,n(1,k_1)}$
	 					}
	 				}
	 			};
	 			\path (a) -- (b) node [midway] {$\cdots$};
	 			\path (c) -- (d) node [midway] {$\cdots$};
	 			\path (e) -- (f) node [midway] {$\cdots$};
	 			\path (A) -- (B) node [midway] {$\cdots$};
	 			\path (C) -- (D) node [midway] {$\cdots$};
	 			\path (E) -- (F) node [midway] {$\cdots$};
	 			\path (P) -- (Q) node [midway] {$\cdots$};
	 	\end{tikzpicture}}
	 	\caption{leading coefficients $\{q\}$ of corresponding exponential polynomial terms}\label{fig.leading.coeff.}
	 \end{figure}

	 	In Figure \ref{fig.leading.coeff.}, $n(\cdot)$ is a number controlled by the variables $``\cdot"$. To write inductively, we set $n(j_1,\ldots,j_{t-1})=k_{j_1,\ldots,j_{t-1}}$.  We have already separated the dominant exponential polynomial terms $e^{q_{j_1}z^s}$ in \eqref{eq.exp-poly.coef.} and their coefficient differential equation \eqref{coeff.eq} at $q_{j_1}$. Within each coefficient differential equation \eqref{coeff.eq} at $q_{j_1}$, we can do this transformation again and sparate the dominant exponential polynomial terms $e^{q_{j_1,j_2}z^{s-1}}$ and their coefficient differential equations at $q_{j_1,j_2}$, where $q_{j_1,j_2}$ are leading coefficients to the degree $s-1$.
	 	We denote the convex hull to equation \eqref{coeff.eq} as $co(W_{j_1}),j_1=1,\ldots,k$
		  ; denote  the $r_{j_1,j_2}$ many conjugated leading coefficients on the vertexes of convex hull $co(W_{j_1})$ as  $\overline\omega_{j_1,j_2},j_1=1,\ldots,k,j_2=1,\ldots,k_{j_1}$ in counterclockwise order, where $k_{j_1}$ is the maximal number of pairwise different complex number   $q_{j_1,j_2}$; denote the $r_{j_1,j_2}$ many 
		  critical rays $\arg z=\eta_{j_1,j_2},$ defined as the ray originated at 0 with the direction of the outer normal to the segment between $\overline\omega_{j_1,j_2}$ and $\overline\omega_{j_1,j_2+1}$, corresponding sectors as
		   \begin{equation*}
		  	S_{j_1,j_2}(\varepsilon)=\{z|\eta_{j_1,j_2-1}+\varepsilon\leq\arg z\leq\eta_{j_1,j_2}-\varepsilon\},\enspace	T_{j_1,j_2}(\varepsilon)=\{z|\eta_{j_1,j_2}-\varepsilon<\arg z<\eta_{j_1,j_2}+\varepsilon\},
		  \end{equation*}
	$1\leq j_1\leq r_1\leq k,1\leq j_2\leq r_{j_1}\leq k_{j_1}$.

		Repeating the division of original equation by the dominant exponential polynomial terms  $e^{q_{j_1}z^s},e^{q_{j_1,j_2}z^{s-1}},\ldots,e^{q_{j_1,\ldots,j_{s}}z}$ for $s$ times inductively, if we retain the symbols and get a coefficient differential equation with only polynomials coefficients, we call it a \emph{fundamental coefficient differential equation} at  $\overline{\omega}_{j_1,\ldots,j_{s}}, 1\leq j_t\leq r_{j_1,\ldots,j_{t-1}}\leq k_{j_1,\ldots,j_{t-1}},1\leq t\leq s,r_{j_0}=r_1,k_{j_0}=k$.  $k_{j_1,\ldots,j_t}$ is the maximal number of pairwise different complex number   $q_{j_1,\ldots,j_{t}},$ see Figure \ref{fig.leading.coeff.},  the $r_{j_1,\ldots,j_{t}}$ many vertexes of $co(W_{j_1,\ldots,j_{t-1}})$ are denoted by $\overline\omega_{j_1,\ldots,j_{t}}$ in counterclockwise order and critical rays $\arg z=\eta_{j_1,\ldots,j_{t}},$ defined as the ray originated at 0 with the direction of the outer normal to the segment between $\overline\omega_{j_1,\ldots,j_t}$ and $\overline\omega_{j_1,\ldots,j_t+1}$, corresponding sectors as 
		 \begin{equation*}
		 	\begin{split}
		 		S_{j_1,\cdots,j_t}(\varepsilon)&=\{z|\eta_{j_1,,\cdots,j_t-1}+\varepsilon\leq\arg z\leq\eta_{j_1,\cdots,j_t}-\varepsilon\},\\\	T_{j_1,\cdots,j_t}(\varepsilon)&=\{z|\eta_{j_1,\cdots,j_t}-\varepsilon<\arg z<\eta_{j_1,\cdots,j_t}+\varepsilon\},
		 	\end{split}	
		\end{equation*}
	   where $1\leq j_t\leq r_{j_1,\ldots,j_{t-1}}\leq k_{j_1,\ldots,j_{t-1}},1\leq t\leq s.$ See Figure \ref{fig.convex.hull.exp-poly}.
        
        \begin{figure}[h]
       	\begin{tikzpicture}
       		\draw[<->](5.5,0)--(0,0)--(0,5.3);
       		\draw(-5,0)--(0,0)--(0,-3);
       		\draw[ -](0.5,1)--(1.5,4)--(4,4.5)--(5,1)--(3.5,-1)--(0.5,1)  node at (0.8,0.6){$\overline\omega_{j_1,\ldots,j_t+1}$} node at (1.5,4.5){$\overline\omega_{j_1,\ldots,j_t}$}
       		node at (4,5){$\overline\omega_{j_1,\ldots,j_t-1}$}
       		node at (6,1.0){$\overline\omega_{j_1,\ldots,j_{t-1},1}$}
       		node at
       		(3.5,-1.3){$\overline\omega_{j_1,\ldots,j_{t-1},r_{j_1,\ldots,j_{t-1}}}$}
       		node at
       		(3,2.2){${co(W_{j_1,\ldots,j_{t-1}})}$};
       		\draw[domain=-5:0] plot(\x,-0.25*\x) node at (-5.8,1.5){$\arg z=\eta_{j_1,\ldots,j_{t}}$};
       		\draw[dashed,domain=-4.8:0] plot(\x,-0.4*\x);
       		\draw[dashed,domain=-5:0] plot(\x,-0.08*\x);
       		\draw[<->]	(-2,0.5)	arc(170:155:1)
       		node at (-2.3,0.75){$\varepsilon$};
       		\draw[<->]	(-2,0.5)	arc(170:175:4)
       		node at (-2.4,0.35){$\varepsilon$};
       		\draw[dashed,domain=-1.6:0] plot(\x,-3*\x);
       		\draw [<->]	(-0.4,2)	arc(110:117:2)
       		node at (-0.6,2.4){$\varepsilon$};
       		\draw [<->]	(-1,3)	arc(110:155:3.5)
       		node at (-2.5,3){$S_{j_1,\ldots,j_{t}}(\varepsilon)$};
       		\draw [<->]	(-0.9,4.5)	
       		;
       		\draw [<->]	(-3.5,1.4)	arc(150:172:3)
       		node at (-4.6,0.8){$T_{j_1,\ldots,j_{t}}(\varepsilon)$};
       		\draw[domain=-1:0,smooth] plot(\x,-5*\x) node at (-1,5.6){$\arg z=\eta_{j_1,\ldots,j_{t}-1}$};
       		\draw[domain=0:1.5] plot(3.5*\x,\x) node at (6.5,1.5){$\eta_{j_1,\ldots,j_{t-1},1}$};
       		\draw[domain=-0.8:0] plot(2*\x,3*\x) node at (-1.6,-2.6){$\eta_{j_1,\ldots,j_{t}+1}$};
       		\draw[domain=0:1.6] plot(2*\x,-1.5*\x) node at (3.6,-2.6){$\eta_{j_1,\ldots,j_{t-1},r_{j_1,\ldots,j_{t-1}}}$};
       	\end{tikzpicture}
       	\caption{${co(W_{j_1,\ldots,j_{t-1}})},\arg z=\eta_{j_1,\ldots,j_{t}}, S_{j_1,\ldots,j_{t}}(\varepsilon),T_{j_1,\ldots,j_{t}}(\varepsilon)$}
       	\label{fig.convex.hull.exp-poly}
       \end{figure}

       \bigskip
        Additionally, we need some relations about the distribution of $z$ and $z^s$, where $s\in\mathbb{C}$. Let $0\leq\alpha<1$ and
        $$\Pi:=\{z\in\mathbb{C}||\Im(z)|<{|z|}^\alpha\}.$$
        If $\Phi$ is formed from $\Pi$ by rotating at a fixed angle around the origin, then $\Phi$ is called the \emph{parabolic
        strip of aperture $\alpha$} along the axis that by rotation around the same angle arising from the
        positive real axis.

     The Lemma  is given originally by Droletz \cite{Jurgen} in his dissertation, which is compiled below.
        
    \begin{lemma}\label{lem.Jurgen.1}
    	Let $s\in\mathbb{N},$ and $\Phi$ be a parabolic strip of aperture $\alpha$ with $z^s\in\Phi$. Then there are parabolic strips
    	$\Phi^{(1)}, \Phi^{(2)},\ldots, \Phi^{(s)}$, of aperture $\alpha$, such that for a sufficiently large $|z|$ the value $z$ lies in the union
    	$$\displaystyle\bigcup_{k=1}^{s}\Phi^{(k)}.$$
    	Moreover, if the argument to the axis of symmetry of $\Phi$ is $s\eta,$ then the argument to the axis of $\Phi^{(k)}$ are $\eta+\frac{2k\pi}{s}$ individually, for $k=1,\ldots,s.$
    \end{lemma}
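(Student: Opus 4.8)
The plan is to pass to an unrotated strip, turn the membership $z^{s}\in\Phi$ into a quantitative estimate on $\arg z$, and then check that this estimate alone confines $z$ to a finite union of parabolic strips. First I would exploit the rotation invariance built into the definition: since $\Phi$ is obtained from $\Pi$ by the rotation through the angle $s\eta$, we have $\Phi=e^{is\eta}\Pi$, hence $z^{s}\in\Phi$ holds precisely when $\bigl(e^{-i\eta}z\bigr)^{s}\in\Pi$. Writing $w:=e^{-i\eta}z$, it is therefore enough to treat $\eta=0$, $\Phi=\Pi$; the rotation $z=e^{i\eta}w$ then carries a parabolic strip with axis argument $\tfrac{2k\pi}{s}$ to one with axis argument $\eta+\tfrac{2k\pi}{s}$, which is the list claimed in the statement.

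Next, with $w=re^{i\phi}$ the condition $w^{s}\in\Pi$ reads $r^{s}|\sin(s\phi)|<r^{s\alpha}$, that is,
\[
|\sin(s\phi)|<r^{-s(1-\alpha)}.
\]
Since $0\le\alpha<1$, the right-hand side tends to $0$ as $r\to\infty$, so for large $r$ the angle $s\phi$ is forced near an integer multiple of $\pi$: there is $m\in\Z$ with $\bigl|\phi-\tfrac{m\pi}{s}\bigr|\le C\,r^{-s(1-\alpha)}$ for an absolute constant $C$, because $|\sin t|$ is comparable to the distance from $t$ to $\pi\Z$ near the zeros of $\sin$. Here only $m$ modulo $2s$ is relevant (as $\phi$ is defined modulo $2\pi$), and since a parabolic strip is invariant under $z\mapsto-z$ its axis is determined only modulo $\pi$; so as $m$ varies only $s$ distinct parabolic strips can occur, and undoing the rotation of the previous paragraph identifies them with the $s$ strips $\Phi^{(k)}$ of the statement.

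The remaining point — and the one I expect to be the real, though still elementary, obstacle — is to verify that this closeness of the \emph{argument} actually forces the \emph{point} $w$ into the parabolic strip $\Phi_{m}$ of aperture $\alpha$ with axis $\tfrac{m\pi}{s}$. Rotating $w$ by $-\tfrac{m\pi}{s}$ one has to bound the imaginary part:
\[
\bigl|\Im\bigl(e^{-im\pi/s}w\bigr)\bigr|=r\,\bigl|\sin\bigl(\phi-\tfrac{m\pi}{s}\bigr)\bigr|\le r\,\bigl|\phi-\tfrac{m\pi}{s}\bigr|\le C\,r^{\,1-s(1-\alpha)},
\]
and compare it with $r^{\alpha}=\bigl|e^{-im\pi/s}w\bigr|^{\alpha}$. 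For $s\ge2$ we have $1-s(1-\alpha)<\alpha$, since $(s-1)(1-\alpha)>0$; hence the left side is $<r^{\alpha}$ once $r$ is large, so $w\in\Phi_{m}$ (the case $s=1$ being trivial, with $\Phi^{(1)}=\Phi$). The only care needed is that the estimate on $|\sin(s\phi)|$ is informative only for large $|z|$ — near the origin $\Pi$, and likewise each $\Phi^{(k)}$, already contains a punctured disc because $|\Im z|\le|z|<|z|^{\alpha}$ there — which is exactly why the conclusion is asserted for $|z|$ sufficiently large. Putting these observations together, every $w$ with $w^{s}\in\Pi$ and $|w|$ large lies in $\bigcup_{k=1}^{s}\Phi_{k}$, and rotating back by $\eta$ finishes the proof.
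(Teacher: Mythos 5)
Your argument is essentially the paper's own: both convert $z^s\in\Phi$ into a bound of the form $|\sin(s\arg z-s\eta)|\le |z|^{-s(1-\alpha)}$ (the paper's version uses the slightly stronger exponent $\alpha-s$), locate $\arg z$ near a zero of this sine, and check that the resulting $O\bigl(r^{1-s(1-\alpha)}\bigr)$ bound on the transverse coordinate is dominated by $r^{\alpha}$ for large $r$; the final comparison of exponents, $1-s(1-\alpha)<\alpha$ for $s\ge 2$, is exactly the point both proofs turn on. Your write-up is in one respect more careful than the paper's: you note that the zeros of $\sin$ occur at all integer multiples of $\pi$, so the admissible axes are $\eta+m\pi/s$ for $m=0,\dots,s-1$ taken modulo $\pi$, whereas the paper asserts that $\arg z^s-s\eta$ is small and only considers shifts by $2k\pi$, silently discarding the case where $z^s$ sits in the ``negative half'' of $\Phi$. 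The one soft spot is your closing sentence identifying your $s$ strips with the stated list $\eta+2k\pi/s$: modulo $\pi$ the two lists agree precisely when $s$ is odd ($2$ is then invertible mod $s$), but for even $s$ the stated list collapses to $s/2$ distinct strips and misses, e.g., the axis $\eta+\pi/s$ (for $s=2$, $\eta=0$, the point $z=ir$ satisfies $z^2=-r^2\in\Pi$ yet lies only in the strip with axis $\pi/2$). This is a defect of the lemma's statement, shared by the paper's own proof, rather than of your reasoning; the correct conclusion, which your argument actually delivers, is that the $s$ axes are $\eta+k\pi/s$, $k=1,\dots,s$.
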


	Now a similar conclusion to Theorem \ref{Thm.main} is given for exponential polynomials coefficients. 
	
	\begin{theorem}\label{Thm.main.exp-poly}
		 Every finite order trancendental entire   solution $f$ of \eqref{lde.exp-poly} satisfies
		\begin{equation}\label{eq.log|f|.exp-poly.coeffi.}
			\log|f(z)|=\Re G_\theta(z^{1/N_\theta})+O(\log|z|), 
		\end{equation}
		for some polynomials $G_\theta$ in $z^{1/N_\theta}$, $N_\theta\in\mathbb{N^*},$ outside a $r$-value set $E$ of finite linear measure, in $\theta\leq\arg z\leq\theta+h, \theta\in(0,2\pi]$, sufficiently small $h>0$, and enough large $|z|$, besides two kinds of areas: 
		\begin{enumerate}
			\item
			$z$ in $$		T_{j_1,\ldots,j_t}^{(i)}(\varepsilon)=\left\{z:\frac{\eta_{j_1,\cdots,j_t}+2i\pi-\varepsilon}{s+1-t}<\arg z<\frac{\eta_{j_1,\cdots,j_j}+2i\pi+\varepsilon}{s+1-t}\right\},$$ 
			where $1\leq i\leq s+1-t, 1\leq t\leq s;$ $\varepsilon>0$, $1\leq j_t\leq r_{j_1,\ldots,j_{t-1}}\leq k_{j_1,\ldots,j_{t-1}},1\leq t\leq s$;  $\eta_{j_1,\ldots,j_{t}}$ are arguments of critical rays defined above;
			\item
			the logarithmic semi-strips 
			$$
			0\leq \arg z-\xi_{j_1,\cdots,j_s,M_{{j_1,\cdots,j_s}}}<\frac{\log^+|z|}{{|z|}^\frac{1}{p_{j_1,\cdots,j_s}}},  p_{j_1,\cdots,j_s}\in\mathbb{N^*},M_{j_1,\cdots,j_s}\in\mathbb{N}
			$$ in 
			$$\mathcal{S}_{j_1,\cdots,j_s}(\varepsilon)=\bigcap_{t=1,2,\cdots,s}\left(\bigcup_{i=1,\cdots, s+1-t}S_{j_{1},\cdots,j_t}^{(i)}(\varepsilon)\right),$$   where $\arg z=\xi_{j_1,\cdots,j_s,M_{{j_1,\cdots,j_s}}}$ are the Stokes rays to the coefficient differential equations at $\overline\omega_{j_1,\ldots,j_s}$ in $\mathcal{S}_{j_1,\cdots,j_s}(\varepsilon).$    
		\end{enumerate}
		\bigskip
		
		Thus,
		all  finite order transcendental entire solutions are of completely regular growth.
	\end{theorem}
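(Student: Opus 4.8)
The plan is to prove \eqref{eq.log|f|.exp-poly.coeffi.} by running the convex-hull decomposition set up before the statement a total of $s=\max\{\deg Q_{m,t}\}$ times, lowering the degree of the exponential-polynomial coefficients by one at each layer, and to invoke Steinmetz's Lemma~\ref{Thm1.stein} (or its sectorial refinement Lemma~\ref{lem.stein.modify}) once the coefficients have become genuine polynomials; completely regular growth then follows from Levin's Lemma~\ref{lem.levin.lim.crg}. One may phrase this as an induction on $s$, the base case $s=1$ being precisely Theorem~\ref{Thm.main}. Throughout, the $O(\log r)$ exceptional disks form a $C_0$-set and the $r$-value set $E$ has zero relative measure, so both are irrelevant for c.r.g.; only the excluded sectors and the logarithmic semi-strips need care.

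For the layer step, put \eqref{lde.exp-poly} into the normalized form \eqref{eq.exp-poly.coef.} with leading coefficients $q_1=0,q_2,\dots,q_k$ to degree $s$, form the convex hull $co(W)$, its vertices $\overline\omega_{j_1}$, critical rays $\arg z=\eta_{j_1}$, and sectors $S_{j_1}(\varepsilon)$, $T_{j_1}(\varepsilon)$. By Dickson's Lemma~\ref{lem.convex.Sj}, on $S_{j_1}(\varepsilon)$ the term $e^{q_{j_1}z^s}$ dominates every other $e^{q_mz^s}$ by a factor $\exp(c|z|^s)$ with $c=c(\varepsilon)>0$; dividing \eqref{eq.exp-poly.coef.} by $e^{q_{j_1}z^s}$ therefore exhibits $f$ as a solution of the coefficient differential equation \eqref{coeff.eq} at $\overline\omega_{j_1}$ up to a perturbation of the coefficients of size $O(e^{-c|z|^s})$, and the perturbed coefficients retain asymptotic expansions in $z^{-1}$ on the interior of $S_{j_1}(\varepsilon)$ with polynomial main parts of degree $\le s-1$. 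This is again of the shape \eqref{coeff.eq}, now with exponential-polynomial coefficients of degree $\le s-1$; repeating the construction inside $S_{j_1}(\varepsilon)$ produces $co(W_{j_1})$, vertices $\overline\omega_{j_1,j_2}$, critical rays $\eta_{j_1,j_2}$ and sectors $S_{j_1,j_2}(\varepsilon)$, and after $s$ steps one reaches, in nested sectors, the fundamental coefficient differential equations at $\overline\omega_{j_1,\dots,j_s}$, whose coefficients are polynomials (plus accumulated exponentially small perturbations). To each of these Lemma~\ref{lem.stein.modify} supplies, for $\theta$ interior to the relevant region, a $p_{j_1,\dots,j_s}\in\mathbb{N}^*$ and a polynomial $G_\theta$ in $z^{1/p_{j_1,\dots,j_s}}$ with $\log|f(z)|=\Re G_\theta(z^{1/p_{j_1,\dots,j_s}})+O(\log|z|)$ in $\theta\le\arg z\le\theta+h$, outside $O(\log r)$ disks $|z-z_\mu|<|z_\mu|^{1-\varepsilon}$ and one logarithmic semi-strip at each Stokes ray $\xi_{j_1,\dots,j_s,M}$.

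It remains to transport the nested sectors from the relevant power variables back to $z$: the decomposition at layer $t$ is naturally carried out in $z^{s+1-t}$ (first $e^{qz^s}$ is peeled off, then $e^{qz^{s-1}}$, and so on). By Droletz's Lemma~\ref{lem.Jurgen.1}, a parabolic strip in $z^{s+1-t}$ pulls back to a union of $s+1-t$ parabolic strips in $z$ rotated by $2i\pi/(s+1-t)$; hence each critical ray $\eta_{j_1,\dots,j_t}$ becomes the $s+1-t$ rays $(\eta_{j_1,\dots,j_t}+2i\pi)/(s+1-t)$, the excluded set $T_{j_1,\dots,j_t}(\varepsilon)$ becomes $\bigcup_i T^{(i)}_{j_1,\dots,j_t}(\varepsilon)$, and the good region surviving all $s$ layers is $\mathcal S_{j_1,\dots,j_s}(\varepsilon)=\bigcap_t\bigl(\bigcup_i S^{(i)}_{j_1,\dots,j_t}(\varepsilon)\bigr)$, inside which the Stokes semi-strips reside. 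Assembling the $s$ layers with this translation gives \eqref{eq.log|f|.exp-poly.coeffi.} (with $N_\theta=p_{j_1,\dots,j_s}$ on $\mathcal S_{j_1,\dots,j_s}(\varepsilon)$) off the two stated families of exceptional regions, the disks, and $E$. Dividing by $r^{\rho(f)}$ shows $\log|f(re^{i\theta})|/r^{\rho(f)}$ converges uniformly in $\theta$ to the indicator value $h_f(\theta)$ on every ray outside these thin regions, $h_f(\theta)$ being the real part of the coefficient of $z^{\rho(f)}$ in $G_\theta((\cdot)^{1/N_\theta})$, with $G_\theta$ piecewise constant in $\theta$ and changing only across Stokes rays. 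Letting $\varepsilon\to0$ collapses the excluded sectors to finitely many rays and the Stokes semi-strips to zero angular width, so $f$ is of completely regular growth on an everywhere dense set of rays; Lemma~\ref{lem.levin.lim.crg} upgrades this to completely regular growth of $f$ on the whole plane.

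The hard part will be making the iteration watertight. One must check that at each of the $s$ layers, dividing by the dominant exponential (and, where needed, by a leading polynomial coefficient whose zeros lie in a set of finite $r$-measure) leaves a coefficient differential equation whose coefficients admit asymptotic expansions of exactly the type demanded by Lemma~\ref{lem.stein.modify}---so that the Wasow normal form \eqref{matrix.sol} and the conclusion of Lemma~\ref{lem.stein.modify} genuinely apply, the exponentially small perturbations being too weak to alter them---and that, after pulling the finitely many exceptional rays and Stokes semi-strips produced at each layer back through the substitutions $w=z^{s+1-t}$ and intersecting, the net exceptional set is still a $C_0$-set in $r$ and still leaves a dense family of good rays. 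The combinatorial bookkeeping of Stokes rays across the $s$ layers---proving $G_\theta$ is piecewise constant and jumps only at these rays---is the technical heart of the argument.
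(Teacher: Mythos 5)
Your proposal follows essentially the same route as the paper: normalize to \eqref{eq.exp-poly.coef.}, peel off the dominant exponential layer by layer via the convex-hull geometry (Dickson's Lemma \ref{lem.convex.Sj}) and Gundersen's logarithmic-derivative estimate, translate the nested sectors between the $z^{s+1-t}$ and $z$ planes with Droletz's Lemma \ref{lem.Jurgen.1}, reduce after $s$ steps to a fundamental coefficient equation handled by Lemma \ref{lem.stein.modify}, and conclude c.r.g.\ from Lemma \ref{lem.levin.lim.crg}. The only differences are presentational (you frame the layer step as an induction on $s$ and as an exponentially small coefficient perturbation, where the paper argues by contradiction that each $H_{p_1,\ldots,p_t}(z_\mu)=o(e^{ar_\mu^{s-t}})$ for every $a$), so the argument matches the paper's own proof.
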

     Note. The polynomial $G_{\theta}$ is piecewise constant and  changes when $\arg z=\theta$ is a Stokes ray, but these changes occur only for finite many times.
     
     \bigskip

    \begin{example}\label{ex.exp-poly.1}
    	The function $f(z)=e^{z^2}$ solves the equation
    	\begin{equation}\label{eq.exam.exp-poly.1}
    		\begin{split}
    			(2z-e^z+\cos(z^2))f''+(e^z+e^{2z}-2-4z^2-2z\sin(z^2)-2z\cos(z^2))f'+&\\
    			(2e^z+4z^2e^z-2ze^z-2ze^{2z}+4z^2\sin(z^2)-2\cos(z^2))f=0.
    		\end{split}
    	\end{equation}
       The normal form  of \eqref{eq.exam.exp-poly.1} is
       \begin{equation*}
       	\begin{split}
       		&e^{iz^2}\left\{\frac{1}{2}f''+(iz-z)f'-(2iz^2+1)f\right\}+\\
       		&e^{-iz^2}\left\{\frac{1}{2}f''-(iz+z)f'+(2iz^2-1)f\right\}+\\
       		&e^{2z}\left\{f'-2zf\right\}+e^z\left\{-f''+f'+(2-2z+4z^2)f\right\}+\left\{2zf''-(2+4z^2)f'\right\}=0.
       	\end{split}
       \end{equation*}
    Set the differential polynomial  \begin{equation*}
    	\begin{split}
    		&G_1(f)=\frac{1}{2}\frac{f''}{f}+(iz-z)\frac{f'}{f}-(2iz^2+1),\\
    		&G_2(f)=\frac{1}{2}\frac{f''}{f}-(iz+z)\frac{f'}{f}+(2iz^2-1),\\
    		&G_3(f)=\frac{f'}{f}-2z,G_4(f)=-\frac{f''}{f}+\frac{f'}{f}+2-2z+4z^2,G_5(f)=2z\frac{f''}{f}-(2+4z^2)\frac{f'}{f},
    	\end{split}
    \end{equation*}
    and we afford $G_i(e^{z^2})=0,i=1,\ldots,5.$
    \end{example}

	\section{Proofs of main results}\label{Sec.proof}

	\subsection{Proof of Theorem \ref{Thm.main}}\label{sec.proof.main}
	\begin{proof}
		Dividing $f$ of both sides of \eqref{standard.eq}, we get
			\begin{equation}\label{standard.2.eq}
			\sum_{j=0}^{N_0}e^{\lambda_j z}\left(a_{n,j}f^{(n)}/f+a_{n-1,j}(z)f^{(n-1)}/f+\cdots +a_{0,j}(z)\right)=0.
		\end{equation}
	For $k,j\in 0,\ldots,N_0$ we set
		\begin{equation}\label{eq.kappa.kj}
			\kappa_{k,j}(z):=\left|\frac{e^{\lambda_kz}\left(\sum\limits_{t=0}^{n}a_{t,k}\dfrac{f^{(t)}}{f}\right)}{e^{\omega_jz}\left(\sum\limits_{t=0}^{n}a_{t,j}\dfrac{f^{(t)}}{f}\right)}\right|=e^{\Re{(\lambda_k-\omega_j)}z}\left|\frac{\left(\sum\limits_{t=0}^{n}a_{t,k}\dfrac{f^{(t)}}{f}\right)}{\left(\sum\limits_{t=0}^{n}a_{t,j}\dfrac{f^{(t)}}{f}\right)}\right|.
		\end{equation}
	From Lemma \ref{lem.convex.Sj}, for $z\in S_j(\varepsilon)$, $\lambda_k\neq\omega_{j}$ ($\omega_{j}$  is a vertex of the minimal convex set containing all the $\lambda_k, k=0,\ldots, N_0$), it follows that 
		$$\Re((\lambda_k-\omega_j)z)\leq-|z|\cdot|\lambda_k-\omega_j|\cdot|\sin\varepsilon|<0.$$
	By Gundersen \cite[Corollary 3]{Gundersen.deriv.}, if we set $\rho=\rho(f)$ as growth order of $f$, there exists $\varepsilon_0>0$ and a set $E$ with finite linear measure, such that $$\left|\frac{f^{(k)}}{f}\right|<{|z|^{k(\rho+\varepsilon_0)}},\quad |z|\notin E.$$ 
	We denote
	$$
		L_j(f)=\sum\limits_{t=0}^{n}a_{t,j}\dfrac{f^{(t)}}{f},
	$$ 
	so \eqref{standard.2.eq} is transformed to $$\sum_{j=0}^{N_0}e^{\lambda_j z}	L_j(f)=0.
	$$
	Then, we claim that the equality below is true:
	\begin{equation}\label{eq.f'/f.o(1)}
		L_j(f)=o\left(\frac{1}{z^{N_1}}\right), \enspace\text{for any}\enspace N_1\in\mathbb{N},\enspace z\in S_j(\varepsilon),\enspace |z|\notin E.	\end{equation}	
	 Otherwise, there is a sequence $\{z_i\}, z_i\in S_j(\varepsilon),|z_i|\notin E$, with $\lim\limits_{i\rightarrow\infty}z_i=\infty,$ such that
	 	\begin{equation}\label{ineq.counterexam}	\left|\sum\limits_{t=0}^{n}a_{t,j}(z_i)\frac{f^{(t)}(z_i)}{f(z_i)}\right|>\frac{C}{{|z_i|}^{N_1}},
	 	\end{equation}
	 	for any $C,N_1\in\mathbb{N}$. It holds that
	 	$\kappa_{k,j}(z_i)=o(1),$ for the reason that $e^{\Re{(\lambda_k-\omega_j)}z}$ decreases exponentially and the term in the absolute value in \eqref{eq.kappa.kj}  is up to a polynomial rate of growth. From \eqref{standard.2.eq},
	 	$$
	 		e^{\lambda_kz_i}\left(\sum\limits_{t=0}^{n}a_{t,k}(z_i)\dfrac{f^{(t)}(z_i)}{f(z_i)}\right)=o\left(	e^{\omega_jz_i}\left(\sum\limits_{t=0}^{n}a_{t,j}(z_i)\dfrac{f^{(t)}(z_i)}{f(z_i)}\right)\right), 
	 	$$
	 	which leads that for large $|z_i|$,
	 	$$	e^{\omega_jz_i}\left(\sum\limits_{t=0}^{n}a_{t,j}(z_i)\dfrac{f^{(t)}(z_i)}{f(z_i)}\right)(1+o(1))=0,$$
	 	a contradiction to the assumption \eqref{ineq.counterexam}. 	 	

    Therefore, for $z\in S_j(\varepsilon), |z|\notin E$, \eqref{eq.f'/f.o(1)} can be transformed into an equation
    \begin{equation}\label{eq.asy.coeffi}
    	f^{(n)}+P_{n-1,j}f^{(n-1)}+\cdots+P_{1,j}f'+\left(P_{0,j}+o\left(\frac{1}{z^{N_1+\deg(a_{n,j})}}\right)\right)f=0,
    \end{equation}
    where $P_{i,j}=a_{i,j}/a_{n,j},i=0,\ldots,n-1$.
    Briefly, denote $Q_{0,j}$ as the last coefficient in equation \eqref{eq.asy.coeffi}.
    Furthermore, it is deduced that every $P_{i,j}$ can be expanded  as
    \begin{equation}\label{eq.expansion.P_{i,j}}
    	P_{i,j}(z)=z^{d_{i,j}}\sum_{v=0}^{\infty}\frac{b_{i,j,v}}{z^v}, \quad |z|>|z_0|,
    \end{equation}
     for $d_{i,j}\in\mathbb{N},b_{i,j,v}\in\mathbb{C}, i=1,\ldots,n-1,$ $j=0,\ldots,N_0',$ enough large $|z_0|$, and $z\in S_j(\varepsilon)$
     . $Q_0$ is consequently in the asymptotic form of
     \begin{equation}\label{asym.Q_{0,j}}
    	Q_{0,j}(z)\sim z^{d_{0,j}}\sum_{v=0}^{\infty}\frac{b_{0,j,v}}{z^v}, \quad |z|>|z_0|,
    \end{equation}
    for $d_{0,j}\in\mathbb{N},b_{0,j,v}\in\mathbb{C},$ $j=0,\ldots,N_0',$ enough large $|z_0|$,  $z\in S_j(\varepsilon),$ and $|z|\notin E$.

    \bigskip
    Next, we introduce the essence of $|z|\in E$. In the proof of Gundersen's result \cite[Corollary 3]{Gundersen.deriv.}, the essential idea is to use Cartan's Lemma \cite[Lemma 9]{Gundersen.deriv.} to dig out constructive disks containing the zeros and poles of $f^{(j)}$ (He defaults $f$ as a meromorphic function and here we consider $j=0$ and $f$ entire). And introducing the set of exceptions $E$ with respect to $r$ is exactly the sum of the diameters of these disks.
    We recall the constructive part of this \cite[pp.~100-101]{Gundersen.deriv.}, which begins by partitioning the complex plane into a merge of countably many annular strips, i.e., there exists an integer $n_0>0$, and for an arbitrary integer $\nu\geq n_0$ and a constant $\alpha>1$, there is
    \begin{equation}\label{annulus.alpha}
    	\alpha^\nu\leq|z|\leq\alpha^{\nu+1}.
    \end{equation}
    In each such strip, there will be only a finite number of zeros  of $f$ and closed disks  $B_{\nu,1},\ldots,B_{\nu,p_\nu}$ containing them (note that the center of the circle here is not necessarily a  zero of $f$, and a single disk may contain more than one zero).
    For any $\varepsilon_0>0$, by \cite[p.~101]{Gundersen.deriv.}, when we take the sum of the diameters of these disks not exceeding
    $
    4\alpha^{-\varepsilon_0 \nu}
    $
    the corresponding $E$ is obtained as a finite linear measure by Cartan's Lemma \cite[Lemma 9]{Gundersen.deriv.}.

    \bigskip
     Thus, \eqref{eq.asy.coeffi} can be modified as
    \begin{equation}\label{eq.asy.coeffi.mod}
    	f^{(n)}+P_{n-1,j}f^{(n-1)}+\cdots+P_{1,j}f'+\left(P_{0,j}+o\left(\frac{1}{z^{N_1+\deg\{a_{n,j}\}}}\right)\right)f=0,
    \end{equation}
    which is established for $z\in S_j^*$, i.e., $z\in S_j(\varepsilon)$ and 
    $$z\notin \tilde{B}:=\bigcup_{\nu=n_0}^{\infty}\bigcup_{t=1}^{p_\nu} B_{\nu,t}
    ,~ n_0>N.$$

    In a step further, it can be shown that \eqref{eq.asy.coeffi.mod} satisfies the form of \eqref{eq.A.coeff.} if $f$ only possesses finite number of zeros in $S_j(\varepsilon)$, which means $\tilde{B}$ disappears when $|z|>|z_0|$ for sufficiently large $|z_0|$.
	Using Lemma \ref{lem.stein.modify}, there exists
	a polynomial $G_\theta$ in $z^{1/p_j}$, $p_j\in\N$, and   a set of zero relative measure $E_\theta$ such that
    \begin{equation}\label{eq.log|f|.asy}
    	\log |f(z)|= \Re G_\theta\left(z^{\frac{1}{p_j}}\right)+ O(\log |z|)
    \end{equation}
    for $|z|\notin  E_\theta, z\in S_j(\varepsilon)$ as $z\rightarrow\infty$  in $\theta\leq\arg z\leq\theta+h,h>0$, and outside of a logarithmic semi-strip $0\leq\arg z-\theta<C\frac{\log^+|z|}{|z|^{1/p_j}}$.
    The last semi-strip occurs only if $\arg z=\theta$ is a Stokes ray in $S_j$, otherwise, we set $C=0$. Thus, it is natural to be inferred $f(z)$ is of completely regular growth on every ray $\theta\in S_j(\varepsilon)\backslash\{\arg z=\xi_{j1},\ldots,\arg z =\xi_{j_{m_j}}\},j=0,\ldots,N_0,$ where $\{\arg z=\xi_{j1},\ldots,\arg z=\xi_{j_{m_j}}\}$ are the Stokes rays of equation \eqref{eq.asy.coeffi} in $S_j(\varepsilon),m_j\in\mathbb{N}.$
    
    \bigskip
   If $f$ has infinitely many zeros in $S_j(\varepsilon)$, the associated exceptional set $\tilde{B}$ consists of countably many closed disks constructed via Cartan's lemma. Following the notation in \eqref{annulus.alpha}, the complex plane is partitioned into annular strips $\alpha^\nu \le |z| \le \alpha^{\nu+1}$ with $\alpha>1$ and $\nu \ge n_0$. Within each strip, a finite collection of disks $B_{\nu,t} = B(\zeta_{\nu,t},    r_{\nu,t})$, $t=1,\dots,p_\nu$, covers the zeros of $f$ in that strip. The total linear measure of these disks is finite, i.e., $\sum_{\nu,t} 4\alpha^{-\varepsilon_0 \nu} < \infty$. The classical asymptotic theorems of Wasow and Steinmetz require holomorphy on a full sector, which is not guaranteed on $S_j^*$ due to the presence of these disks.
   
   To overcome this, we employ the modified Wasow theorem (Theorem \ref{thm.Wasow.mod}) on a subregion $S_0 \subset S_j^*\subset S_j(\varepsilon)$ that satisfies the uniform interior condition. The construction of $S_0$ involves slightly enlarging each exceptional disk: define $B^0_{\nu,t} = B\!\left(\zeta_{\nu,t},    r_{\nu,t} + \frac{\alpha_0}{|\zeta_{\nu,t}|}\right)$ for a fixed $\alpha_0>0$, and set $S_0 = S_j(\varepsilon) \setminus \bigcup_{\nu,t} B^0_{\nu,t}$. 
   
   The crucial observation is that the total linear measure of the enlarged disks remains finite. Indeed, since the original exceptional set has finite linear measure, we have $\sum_{\nu} 4\alpha^{-\varepsilon_0 \nu} < \infty$. Moreover, 
  because of $	\alpha^\nu\leq|\zeta_{\nu,t}|\leq\alpha^{\nu+1}$,  by \cite[eq. (7.2),(7.6),(7.7)]{Gundersen.deriv.}, we denote $p_\nu$ as the number of cartan disks which intersects with the annulus, and $n(r,1/f)$ as the counting function of zeros of $f$ inside the disk $B(0,r)$, and then we get $$p_\nu\leq n\left(\alpha^{\nu+2},\frac{1}{f}\right)\leq \frac{12}{\log\alpha}T(\alpha \alpha^{\nu+2},f)<\frac{12}{\log\alpha}\alpha^{(\nu+3)(\rho+\varepsilon_0)},$$
      so the sum of diameter-difference between $B_{\nu,t}$ and $B_{\nu,t}'$ is
   \begin{equation}\label{ineq.sum-diam}
   	 \sum_{\nu}\sum_{t=1}^{p_\nu} \left( \frac{2\alpha_0}{|\zeta_{\nu,t}|}\right) < \sum_\nu \sum_{t=1}^{p_\nu} \left( 2\alpha_0\alpha^{-\nu}\right) <C\sum_{\nu}\alpha^{\nu(\rho+\varepsilon_0-1)}, ~C>0.
   \end{equation}
    Consequently, when $\rho(f)<1,$
   \[
   \sum_{\nu} \left(4\alpha^{-\varepsilon_0 \nu} + \frac{\alpha_0}{|\zeta_{\nu,t}|}\right) < \infty,
   \]
   so the enlarged exceptional set $\bigcup_{\nu,t} B'_{\nu,t}$ also has finite linear measure. Therefore, the subregion $S_0$ has full measure in $S_j(\varepsilon)$ and satisfies the uniform interior condition: for every $z \in S_0$, the disk $B(z, \alpha/|z|)$ is contained in $S_j(\varepsilon) \setminus \bigcup_{\nu,t} B_{\nu,t} = S^*$. 
   
   Since the radius $\alpha/|z|$ of interior condition is sharp in \cite[Theorem 8.8]{Wasow} by the nature of Cauchy integral formula, we can not reduce it to $O(1/|z|^N)$ for some integer $N$, to allow a uniform termwise-differentiable asymptotic form of $f'(z)$ on  such $S_0$ outside ``much smaller" disks, in Corollary \ref{Cor.term-diff-inf}.
   
   \medskip
   
   However, when $\rho(f)\geq 1$ and $n(r,1/f)$ grows extremely fast as r growing,  the sum \eqref{ineq.sum-diam} may not be coved by a finite linear measure. In this case, we can deal with the exceptional disks on the $\xi$-plane under the tranformation $\xi=z^{n_0}$, $n_0=\lfloor\rho+1\rfloor$.  
   
   Let $g(\xi)=f(z)=f(\xi^{1/n_0})$. Without generality, we fix $j=1$ and abberviate the above symbols with the subscript $j$.  We choose a single-valued analytic branch of $\xi^{1/n_0}$ corresponding to the sector $S(\varepsilon)$, and denote by $S'(\varepsilon)$ the image of $S(\varepsilon)$ under this mapping.  The exceptional set $\tilde{B}$ is mapped to $\tilde{B}'$, 
    $$ \tilde{B}':=\bigcup_{\nu=n_0}^{\infty}\bigcup_{t=1}^{p_\nu} B_{\nu,t}'=	\bigcup_{\nu=n_0}^{\infty}\bigcup_{t=1}^{p_\nu} B(\xi_{\nu,t},r_{\nu,t}')
   ,~ n_0>N,$$
   and we set ${S^*}' = S'(\varepsilon) \setminus \tilde{B}'$.  Substituting $z=\xi^{1/n_0}$ into the asymptotic differential equation \eqref{eq.asy.coeffi.mod} and using the chain rule, we obtain a new linear differential equation for $g(\xi)$:
   \begin{equation}\label{eq.transformed}
   	g^{(n)}(\xi) + \sum_{k=1}^{n-1}Q_{k}(\xi)g^{(k)}(\xi)
   	+\Bigl(Q_{0}(\xi)+o\bigl(\xi^{-(N_1+\deg\{a_{n}\})/n_0}\bigr)\Bigr)g(\xi)=0,
   \end{equation}
   where the coefficients $Q_{k}(\xi)$ are holomorphic in ${S^*}' $ and admit asymptotic expansions of the form
   \begin{equation}\label{asym.Q}
   	Q_{k}(\xi)\sim \xi^{\delta_k}\sum_{\nu=0}^{\infty}\frac{c_{k,\nu}}{\xi^{\nu/n_0}},\qquad \xi\to\infty,\;\xi\in {S^*}',
   \end{equation}
   with rational exponents $\delta_k$ and constants $c_{k,\nu}\in\mathbb{C}$.  
   In parallelled, we can construct $S_0'$ involves slightly enlarging each exceptional disk: define ${B^*_{\nu,t}} = B\!\left(\xi_{\nu,t},    r_{\nu,t}' + \frac{\alpha_0}{|\xi_{\nu,t}|}\right)$ for a fixed $\alpha_0>0$, and set $S_0 '= S'(\varepsilon) \setminus \bigcup_{\nu,t} B^*_{\nu,t}$.  It is easy to check the uniform interior condition: there exists a constant $\alpha'>0$ such that for every $\xi\in S_0'$ the disk $B(\xi,\alpha'/|\xi|)$ is contained in ${S^*}'$. Therefore, the modified version of Wasow's asymtotic theorem (Theorem \ref{thm.Wasow.mod}) can be similarly used on the $\xi$-plane.
   
We will not repeat the process to get a similar estimate of $\log|g(\xi)|$ outside some exceptional set via Steinmetz's method, compared to \eqref{eq.log|f|.asy}. Instead, our main objective is to demonstrate that through transformation $\xi=z^{n_0}$, the expansion of the diameters of these disks is measurable. Since the pre-image of each ${B^*_{\nu,t}}$ is $ B\!\left(\zeta_{\nu,t},    r_{\nu,t} + \frac{\alpha_1}{|\zeta_{\nu,t}|^{n_0}}\right)\supset B_{\nu,t}$,  the sum of diameter-difference in \eqref{ineq.sum-diam} becomes
 \begin{equation*}
	\sum_{\nu}\sum_{t=1}^{p_\nu} \left( \frac{2\alpha_1}{|\zeta_{\nu,t}|}\right) < \sum_\nu \sum_{t=1}^{p_\nu} \left( 2\alpha_1\alpha^{-n_0\nu}\right) <C\sum_{\nu}\alpha^{\nu(\rho-n_0+\varepsilon_0)}, ~C>0.
\end{equation*}
In conclusion, no matter how large is $\rho(f)$, we can treat the cartan disks in Gundersen's estimate and following operations as a finite linear measure.
   
   \bigskip
   Thus, we may apply Theorem \ref{thm.Wasow.mod} to the linear differential equation on $S^*$, obtaining the asymptotic representation for $f$ as in (4.10) for $z \in S_0$ (hence for all $z \in S_j(\varepsilon)$ outside a set of zero relative linear measure). 
     On such punctured sector $S_j(\varepsilon)\backslash\tilde{B}$, Steinmetz's analysis framework (proof of Lemma \ref{Thm1.stein}) can still be generalized. We can then only replace the exceptional condition $|z|\notin E_\theta$ with $$|z|\notin E_\theta~\text{as well as}~z\notin\tilde{B},$$
    and maintain that \eqref{eq.log|f|.asy} holds. Since the definition of c.r.g. with exceptional set $C_0$-set and $E_0$-set are essentially equivalent, referring to Section \ref{Sec.crg}, here we simplify the discussion about them. 
\end{proof}

\subsection{Proof of Theorem \ref{Thm.main.Tj}}
    	\begin{proof}
    	To deal with the case $z\in T_j(\varepsilon)$, without generality, we set $j=1$.	Due to Corollary \ref{Cor.normal-form.Tj}, the normal form \eqref{standard.eq} possesses asymtptotic form \eqref{eq.asym.Tj} in $T_1(\varepsilon)$.
    	Dividing $f$ by both sides of \eqref{eq.asym.Tj}, we get
    	\begin{equation}\label{eq.asym.Tj2}
    		\sum_{\overline{\lambda}_k\in s_1}e^{\lambda_k z}	\sum_{m=0}^{n}c_{m,k}z^{d_{m,k}}(1+\varepsilon(z))\frac{f^{(m)}}{f}=0.
    	\end{equation}
    	First, we consider $z\in T_1(\varepsilon)\cap U_1$. Contrasting Lemma \ref{lem.convex.Sj} with Corollary \ref{Cor.U1.UK'}, the summation of \eqref{eq.asym.Tj2} can be divided by
    	\begin{equation}\label{eq.asym.Tj.U1}
    		\begin{split}
    			&e^{\omega_1 z}	\sum_{m=0}^{n}c_{m,1}z^{d_{m,1}}(1+\varepsilon(z))\frac{f^{(m)}}{f}+\sum_{\tau_{p,m}\notin \tilde{s}_1}e^{\lambda_p z}	\sum_{m=0}^{n}c_{m,p}z^{d_{m,p}}(1+\varepsilon(z))\frac{f^{(m)}}{f}\\
    			+&\sum_{\tau_{p,m}\in \tilde{s}_1}e^{\lambda_p z}	\sum_{m=0}^{n}c_{m,p}z^{d_{m,p}}(1+\varepsilon(z))\frac{f^{(m)}}{f}=0.
    		\end{split}
    	\end{equation}
    	We denote
    	$$
    	L_k^*(f)=\sum\limits_{m=0}^{n}c_{m,k}z^{d_{m,k}}(1+\varepsilon(z))\dfrac{f^{(m)}}{f},
    	$$ 
    	so \eqref{eq.asym.Tj.U1} is abbreviated to \begin{equation*}
    		e^{\omega_j z}	 L_1^*(f)+\sum_{\tau_{p,m}\notin \tilde{s}_1}e^{\lambda_p z}	 L_p^*(f)
    		+\sum_{\tau_{p,m}\in \tilde{s}_1}e^{\lambda_p z}	 L_p^*(f)=0.
    	\end{equation*}
    	Set $\max_m\{d_{m,1}\}$ as $d_M$. Then, we claim that the equality below is true:
    	\begin{equation}\label{eq.f'/f.o(1).L*1}
    		\frac{L_1^*(f)}{z^{d_M}}=o\left(
    		z^{N_1}\right), \enspace\text{for some}\enspace N_1\in\mathbb{N},\enspace z\in T_1(\varepsilon)\cap U_1,\enspace |z|\notin E,	\end{equation}	
    	where $E$ is a set of 	finite linear measure.

    	Otherwise, there is a sequence $\{z_i\}, z_i\in T_1(\varepsilon)\cap U_1,|z_i|\notin E$, with $\lim\limits_{i\rightarrow\infty}z_i=\infty,$ such that
    	\begin{equation}\label{ineq.counterexam.U1}	\left|\sum\limits_{m=0}^{n}c_{m,1}z_i^{d_{m,1}-d_M}(1+\varepsilon(z_i))\frac{f^{(m)}(z_i)}{f(z_i)}\right|>{C}{|z_i|}^{N_1},
    	\end{equation}
    	for any $C,N_1\in\mathbb{N}$.   \cite[Corollary 3]{Gundersen.deriv.} point out that if we set $\rho=\rho(f)$ as growth order of $f$, there exists $\varepsilon_0>0$
    	, such that \begin{equation}\label{ineq.gundersen}
    		\left|\frac{f^{(k)}}{f}\right|<{|z|^{k(\rho+\varepsilon_0)}},\quad z\notin E.
    	\end{equation}  It holds for $\tau_{p,m}\in \tilde{s}_1$ that
    	\begin{equation*}
    		\kappa_{p}(z_i):=\left|\frac{e^{\lambda_pz_i}\left(\sum\limits_{m=0}^{n}c_{m,p}z_i^{d_{m,p}}(1+\varepsilon(z_i))\dfrac{f^{(m)}}{f}\right)}{z^{d_M}e^{\omega_1z_i}\left(\sum\limits_{m=0}^{n}c_{m,1}z_i^{d_{m,1}-d_M}(1+\varepsilon(z_i))\dfrac{f^{(m)}}{f}\right)}\right|=
    		o(1),~|z_j|\notin E,
    	\end{equation*}
    	for the reason that $z^{d_{m,p}-d_M}e^{\Re{(\lambda_k-\omega_1)}z}$ decreases exponentially by Corollary \ref{Cor.U1.UK'}, and the remained term in the absolute value   is up to a polynomial rate of growth by \eqref{ineq.counterexam.U1} and  \eqref{ineq.gundersen}. For $\tau_{p,m}\notin \tilde{s}_1$,  	by Corollary \ref{Cor.U1.UK'} and \cite[Corollary 3]{Gundersen.deriv.}, $$\kappa_{p}(z_i)=o\left(|z_i|^{-v+n(\rho+\varepsilon_0)-N_1}\right)=o(1),$$ 
    	because of the existence of $N_1>n(\rho+\varepsilon_0)-v$.
    	Then for both of $\tau_{p,m}\in \tilde{s}_1$ and  $\tau_{p,m}\notin \tilde{s}_1$,
    	$$
    	e^{\lambda_pz_i}\left(\sum\limits_{m=0}^{n}c_{m,p}z_i^{d_{m,p}}(1+\varepsilon(z_i))\dfrac{f^{(m)}}{f}\right)=o\left(	e^{\omega_1z_i}\left(\sum\limits_{m=0}^{n}c_{m,1}z_i^{d_{m,1}}(1+\varepsilon(z_i))\dfrac{f^{(m)}}{f}\right)\right), 
    	$$
    	which leads that for large $|z_i|\notin E$,
    	$$	z_i^{d_M}e^{\omega_1z_i}\left(\sum\limits_{m=0}^{n}c_{m,1}z_i^{d_{m,1}-d_M}(1+\varepsilon(z_i))\dfrac{f^{(m)}}{f}\right)(1+o(1))=0,$$
    	a contradiction to the assumption \eqref{ineq.counterexam.U1}.

    	\bigskip
    	Therefore, for $z\in T_1(\varepsilon)\cap U_1, |z|\notin E$, \eqref{eq.f'/f.o(1).L*1} can be transformed into an equation
    	\begin{equation}\label{eq.asy.coeffi.U1}
    		f^{(n)}+P_{n-1}f^{(n-1)}+\cdots+P_{1}f'+\left(P_{0}+o\left(
    		z^{N_1-d_{n,1}}\right)\right)f=0,
    	\end{equation}
    	where $P_{m}=\frac{c_{m,1}z^{d_{m,1}}}{c_{n,1}z^{d_{n,1}}}(1+\varepsilon(z)),m=0,\ldots,n-1$.
    	It is deduced that  for $z\in T_1(\varepsilon)\cap U_1,$ $|z|\notin E$, every $P_{m}$ can be asymptotically expanded  as
    	\begin{equation}\label{eq.expansion.P_{m}}
    		P_{m}(z)\sim z^{d_{m,1}-d_{n,1}}\sum_{v=0}^{\infty}\frac{b_{m,v}}{z^v}, \quad |z|>|z_0|~\text{large enough}, 
    	\end{equation}
    	for $b_{m,v}\in\mathbb{C}, m=1,\ldots,n-1.$  
    	Taking a similar analysis to the proof section \ref{sec.proof.main}, using the modified version of Lemma \ref{lem.stein.modify} on a toplogical hull, Theorem \ref{thm.Wasow.mod}, there exists
    	a polynomial $G_\theta$ in $z^{1/p_1}$, $p_1\in\N,$    a set of zero relative measure $E_\theta$ such that
    	for $|z|\notin  E_\theta$, $$z\in\{z|\theta\leq\arg z\leq\theta+h,0<h<\varepsilon_1\varepsilon,\varepsilon_1>0\}\subset T_1(\varepsilon)\cap U_1~\text{as}~ |z|\rightarrow\infty,$$ and outside of a logarithmic semi-strip $0\leq\arg z-\theta<C\frac{\log^+|z|}{|z|^{1/p}}$, it holds that		\begin{equation}\label{eq.log|f|.asy.U1}
    		\log |f(z)|= \Re G_\theta\left(z^{\frac{1}{p_1}}\right)+ O(\log |z|).
    	\end{equation}
    
    \bigskip
    
   For the case $z\in T_1(\varepsilon)\cap U_{K'}$, we can take parallel consideration to the case $z\in T_1(\varepsilon)\cap U_{1}$.
 since $j$ was arbitarily chosen as 1 in the above paragraph, we have an estimate of $\log|f|$ in the form of \eqref{eq.log|f|.asy.U1} for every $j=0,1\ldots,N_0$. To denote the general symbols, we write $U_{1}$ and $U_{K'}$, the case to $j=1$, as $U_{j,1}$ and $U_{j,K'_j}$, the case to arbitrary $j$. The area
 $$
 T_j(\varepsilon)\cap U_{j,1} \bigcup S_j(\varepsilon)\bigcup T_{j-1}(\varepsilon)\cap U_{j-1,K'_{j-1}},
 $$
 which is a continuation of the sector $S_j(\varepsilon)$, contains every ray in $\overline{S}_j$ (see the definition in Section \ref{Sec.LDE.exp-sum}) originating from 0 when $|z|\to\infty$, besides the two critical rays $\arg z=\eta_{j-1}$ and $\arg z=\eta_{j}$.

    \bigskip
    
     In conclusion, combining \eqref{eq.log|f|.asy} and \eqref{eq.log|f|.asy.U1}, for $|z|\notin$ a $E_0$-set (or $z\notin$ a $C_0$-set) and $|z|\to\infty$, we get an estimate of $$\log|f|=\Re G_\theta\left(z^{\frac{1}{p_\theta}}\right)+ O(\log |z|)$$ in every narrow sector
     $\{z|\theta\leq\arg z\leq\theta+h,h>0\}$
     outside some possibly existing logarithmic semi-strips $0\leq\arg z-\theta<C\frac{\log^+|z|}{|z|^{1/p_\theta}},$ when $\arg z=\theta$ is a Stokes ray, and outside every critical ray $\arg z=\eta_j$.
    That means the set of arguments of c.r.g. is $$\mathbb{M}=(0,2\pi]\backslash\{\eta_{0},\ldots,\eta_{N_0},\xi_{01},\ldots,\xi_{0{m'_{0}}},\ldots,\xi_{N_0,1},\ldots,\xi_{N_0{{m_{N'_0}}}}\},$$
    where $\xi_{j,k}$, $k=1,\ldots,m_0,$ are the Stokes rays insides $\overline{S}_j$.
     Then, by Lemma \ref{lem.levin.lim.crg}, $f(z)$ is of c.r.g. on the set of rays with arguments $\overline{\mathbb{M}}=(0,2\pi]$.
    \end{proof}

    \subsection{Proof of Theorem \ref{Thm.main.exp-poly}}
    \begin{proof}
    Change \eqref{lde.exp-poly} to the normalized form \eqref{eq.exp-poly.coef.}. Dividing $f$ of both sides of \eqref{eq.exp-poly.coef.}, we afford
    	\begin{equation}\label{eq.exp-poly.coef.o(e^{ar^s})}
    	\sum_{j=1}^{k}\exp\left(q_{j}z^s\right)\left\{	\sum_{m=0}^{n}
    \hat{G}_{m,j}(z)\frac{f^{(m)}(z)}{f(z)}\right\}=0,
    \end{equation}
    for every $a\in\mathbb{R}.$ Denote $H_j$ by
    $$
    H_j(z)=	\sum_{m=0}^{n}
    \hat{G}_{m,j}(z)\frac{f^{(m)}(z)}{f(z)}=0,
    $$ and then \eqref{eq.exp-poly.coef.o(e^{ar^s})} becomes
    \begin{equation}\label{eq.Hk.o(e^{ar^s})}
    	\sum_{j=1}^{k}H_j(z)\exp(q_{j}z^s)=0.
    \end{equation}
    The symbols $r,k,q,H,\omega,co(\cdot),S(\varepsilon),T(\varepsilon)$ here below refer to the definition in Section \ref{subsec.e-p.coeff}. Also, set $L_p$ as the side between the vertex point $\overline\omega_{p}$ and $\overline\omega_{{p-1}}$ of the convex hull $co(W)$,
    and $\Phi_{p}$ and is the parabolic strip of a sufficiently large opening along the critical ray $\arg z=\eta_{p}$ as an axis of symmetry.
     There is a sequence $\{z_\mu\}_{\mu\in\mathbb{N}}$ with $\lim_{\mu\to\infty}z_\mu=\infty,$ such that every $z_\mu^s$ lies in $S_j(\varepsilon)$. As a result of $$S_j(\varepsilon)\cap(\Phi_{p}\cup\Phi_{{p-1}})=\emptyset$$ for $ |z_{\mu}^s|\rightarrow\infty$,  $\{z_\mu^s\}$ is   contained in the area between $\Phi_{p}$ and $\Phi_{{p-1}}$. 
    
    \bigskip
    Furthermore, we claim there is a $A\in \mathbb{R} $ such that for a sufficiently large sequence $|z_\mu|=r_\mu$,
    \begin{equation}\label{ineq.Hp}
    	|H_p(z_\mu)|>ce^{Ar_\mu^{s-1}}
    \end{equation}
    is true with an appropriate $c>0$. Therefore, for $t,p=1,\ldots,k,$ we find
    \begin{align*}
    	\kappa_t(z_\mu):=\left|\frac{H_t(z_\mu)e^{q_{t}z_\mu^s}}{H_p(z_\mu)e^{q_{p}z_\mu^s}}\right|
    	=\left|\frac{H_t(z_\mu)}{H_p(z_\mu)}\right|e^{\Re{((q_{t}-q_{p})z_\mu^s})}
    	<\frac{|H_t(z_\mu)|}{c}e^{\Re{((q_{t}-q_{p})z_\mu^s})-Ar_\mu^{s-1}},t\not= p.
    \end{align*}
    Further, three circumstances will show up. 
    \begin{enumerate}
    	\item[Case 1.] If $\overline{q}_{t}$ does not lie on $L_p$ or $L_{p+1}.$ Then it follows from Lemma \ref{lem.convex.Sj}:
    	$$
    	\Re((q_{t}-q_{p})z_\mu^s)\leq-|z_\mu^s||q_{t}-q_{p}||\sin\varepsilon|,
    	$$
    	with $z_\mu^s\in S_p(\varepsilon)$ a sufficiently small $\varepsilon>0.$ Then it follows
    	$$
    	\kappa_t(z_\mu)\leq \exp(-|z_\mu^s||q_{t}-q_{p}||\sin\varepsilon|-(A-B)r_\mu^{s-1})=o(1),
    	$$
    	for a $B\in\mathbb{R}$, except for a set $E$ in $r=|z|$ of finite linear measure, since $f^{(m)}/f$ is of polynomial growth except for some set $E$ in $r=|z|$ of finite linear measure, see Gundersen \cite[Corollary 3]{Gundersen.deriv.}, such that
    	\begin{equation}\label{ineq.Ht}
    		\frac{1}{c}|H_t(z_\mu)|=\frac{1}{c}\left|\sum_{m=0}^{n}
    		\hat{G}_{m,j}(z_\mu)\frac{f^{(m)}(z_\mu)}{f(z_\mu)}\right|\leq e^{Br_\mu^{s-1}},
    	\end{equation}
       with $|z_\mu|\notin E,z_\mu\rightarrow\infty, t\neq p$. 
       \item[Case 2.] If $\overline{q}_{t}$ is on $L_p,t\neq p$. Without loss of generality, we assume the axis of symmetry of $\Phi_{p}$ is the positive axis. Then $q_{t}-q_{p}$ is imaginary and thus
       $$q_{t}-q_{p}=-i|q_{t}-q_{p}|.$$
      For the reason that $\{z_\mu\}$ are not in the parabolic strips, it is followed by the definition of parabolic strips that 
       \begin{align*}
       \kappa_{t}(z_\mu)	<&\frac{|H_t(z_\mu)|}{c}e^{-{(|q_{t}-q_{p}|\Im(z_\mu^s}))-Ar_\mu^{s-1}}\\
       \leq&\frac{|H_t(z_\mu)|}{c}e^{\left(-{|q_{t}-q_{p}|r_\mu^{s\alpha}}\left(1-A_1r_\mu^{s\left(1-\alpha-\frac{1}{s}\right)}\right)\right)}.
       \end{align*}
       Now, $\alpha$ can be chosen from the interval $(1-1/s,1)$ and
       \begin{equation}\label{eq.kappa}
       \kappa_{t}(z_\mu)=o(1).
       \end{equation}
       \item[Case 3.] If $\overline{q}_{t}$ lies on $L_{p+1}, t\neq p.$ As in Case 2, we have  $\kappa_{t}(z_\mu)=o(1).$
    \end{enumerate}
    Together with \eqref{eq.Hk.o(e^{ar^s})},\eqref{ineq.Ht},\eqref{eq.kappa}, 
    \begin{equation}\label{eq.Hp.z.mu}
    	\sum_{j=1}^{k}H_j(z_\mu)e^{q_{t_s}z_\mu^s}=(1+o(1))H_p(z_\mu)e^{q_{p_s}z_{\mu}^s}
    \end{equation}
    for every $a\in\mathbb{R}, z_\mu\in S_p(\varepsilon), |z_\mu|\notin E.$ Then for \eqref{ineq.Hp}, we have
    \begin{equation}\label{ineq.Ht/e^qz}
    	\frac{\left|\sum\limits_{j=1}^{k}H_j(z_\mu)e^{q_{j}z^s_\mu}\right|}{
    		\left|e^{q_{p}z_\mu^s}\right|}\geq c(1+o(1))e^{Ar_\mu^{s-1}}, 
    \end{equation}
    where $c>0,z_\mu\in S_p(\varepsilon),|z_\mu|\notin E.$ Then it follows
    $$
    \left|\sum_{j=1}^{k}H_j(z_\mu)e^{q_{j}z_mu^s}\right|\geq c(1+o(1))e^{Ar_\mu^{s-1}-|q_{p}z_\mu^s|},\enspace |z_\mu|\notin E,
    $$
    but it contradicts \eqref{eq.Hk.o(e^{ar^s})}. Consequently, there are only two possibilities:
    \begin{enumerate}
    	\item[(I)] The assumption \eqref{ineq.Hp} is incorrect. Then for
    	every $a\in\mathbb{R}$, vertex point $\overline \omega_{p}$ of $co(W)$, we have
    	\begin{equation}\label{eq.Hp.case1}
    		H_p(z_\mu)=o(e^{ar_\mu^{s-1}}),z_\mu\in S_p(\varepsilon).
    	\end{equation} 
     \item[(II)] The sequence $z_\mu^s$ lies  in $T_{p}$ for a sufficiently large $|z_\mu|$, the axis of symmetry is the critical ray $\arg z^s_\mu=\eta_{p}$ of the convex hull $co(W)$.
    Thus, there are $s$ subsequences $\{z_{\mu1},\ldots,z_{\mu s}\}$  of  the sequence $\{z_\mu\}(\mu\in\mathbb{N})$ and $s$ branches $T_p^{(1)},\ldots,T_p^{(s)}$ of $T_p$, such that $z_{\mu j}$ lies in subsectors $$T_{p}^{(j)}=\left\{z:\frac{\eta_p+2j\pi-\varepsilon}{s}<\arg z<\frac{\eta_p+2j\pi+\varepsilon}{s}\right\},\quad 1\leq j\leq s.$$ 
\end{enumerate}
    Now we set $p=p_1,$ $	H_{p_1,p_2}$ be a term of the coefficient differential equation to $\overline\omega_{{p_1,p_2}}$, the vertex of the convex hull of $co(W_{p_1})$. If $W_{p_1}$ vanishes, we can take $p_1=1$ such that the coefficient differential equation does not disappear.
     We may always denote the subsequence of $\{z_\mu\}$ as $\{z_\mu\}$, consequently arrive to 
    $$
    	H_{p_1,p_2}(z_\mu)=o\left(e^{ar_\mu^{s-2}}\right), z_\mu^s\in S_{p_1}(\varepsilon)\cap S_{p_1,p_2}(\varepsilon),
    	$$
    	where $j=1,\ldots,s;$  $i=1,\ldots,s-1; 1\leq p_2\leq r_{p_1}\leq k_{p_1}.$
     Repeatedly, we get a relationship
    \begin{align*}
    		H_{p_1,p_2,\cdots,p_s}(z_\mu)=o\left(e^{ar_\mu^{0}}\right)=o(1),a\in\mathbb{R},z_\mu^s\in \bigcap_{j=1,2,\cdots,s}S_{p_{1},\cdots,p_j}(\varepsilon),	
    \end{align*}
    Therefore, there exist the sequence $\{z_\mu\}(\mu\in\mathbb{N})$ and $i$ branches $T_{p_1,\ldots,p_j}^{(1)},\ldots,T_{p_1,\ldots,p_j}^{(i)}$ of $T_{p_1,\ldots,p_j}(\varepsilon)$, such that $z_{\mu}$ does not lie in subsectors $$		T_{p_1,\ldots,p_j}^{(i)}(\varepsilon)=\left\{z:\frac{\eta_{p_1,\cdots,p_j}+2i\pi-\varepsilon}{s+1-j}<\arg z<\frac{\eta_{p_1,\cdots,p_j}+2i\pi+\varepsilon}{s+1-j}\right\},$$ 
    where $1\leq i\leq s+1-j, 1\leq j\leq s.$ 
     
      We note that here $H_{p_1,p_{2},\cdots,p_s}(z_\mu)=o(z_\mu^{-n_0})$ for any $n_0\in\mathbb{N}$. Otherwise, we will get a contradiction with the same analysis under \eqref{eq.f'/f.o(1)}.
     Since $H_{p_1,p_{2},\cdots,p_s}(z)=0$ is a linear differential equation with polynomial coefficients, we can construct an equation which coefficients have asymptotic forms in sectors
     $$\bigcap_{j=1,2,\cdots,s;\atop i=1,\cdots, s+1-j}S_{p_{1},\cdots,p_j}^{(i)}(\varepsilon),$$
     where
     $$		S_{p_1,\ldots,p_j}^{(i)}(\varepsilon)=\left\{z:\frac{\eta_{p_1,\cdots,p_j-1}+2i\pi+\varepsilon}{s+1-j}\leq\arg z\leq\frac{\eta_{p_1,\cdots,p_j}+2i\pi-\varepsilon}{s+1-j}\right\}.$$ 
      By the similar proof of theorem \ref{Thm.main} and Lemma \ref{lem.stein.modify} (when Lemma \ref{lem.stein.modify} lose effect for infinite many zeros of $f$ in each sector, we take the same discussion with \eqref{eq.log|f|.asy} into account, which is abbreviated in this part),  we afford
     \begin{equation}\label{eq.final.log|f|}
     	\log|f(z_\mu)|=\Re G_\theta(z_\mu^{1/N_\theta})+O(\log|z_\mu|), z_\mu\in\bigcap_{j=1,2,\cdots,s}\left(\bigcup_{i=1,\cdots, s+1-j}S_{p_{1},\cdots,p_j}^{(i)}(\varepsilon)\right),
     \end{equation} 
     with some polynomial $G_\theta$ in $z^{1/N_{\theta}}$,  for $N_{\theta}\in\mathbb{N},$ $\theta\in(0,2\pi],$ $|z_\mu|\notin E$, as $z_\mu\rightarrow\infty,$ in $\theta\leq\arg z_\mu\leq\theta+h,h>0, $ 
        outside of a logarithmic semi-strip $0\leq\arg z_\mu-\theta<C\frac{\log^+|z_\mu|}{|z_\mu|^{1/p}}$. The last semi-strip occurs only if $\arg z_\mu=\theta$ is a Stokes ray, otherwise, we set $C=0$.

In this proof, we will not repeat the analysis of the Cartan discs for the uniform interior expansion. It is always contained within a finite linear measure. See the ending of section \ref{sec.proof.main}.

    In the final, we consider the property of completely regular growth of $f$. \eqref{eq.final.log|f|} holds on any ray from the original point except for finitly many possible rays -- all stokes rays in $$\bigcap_{j=1,2,\cdots,s}\left(\bigcup_{i=1,\cdots, s+1-j}S_{p_{1},\cdots,p_j}^{(i)}(\varepsilon)\right),$$ 
     and all axes of symetry in $T^{(i)}_{p_1,\ldots,p_j}$ for $i=1,\ldots,s+1-j; j=1,\ldots,s$ and admissble $p_1,\ldots,p_s$.  Then, by Lemma \ref{lem.levin.lim.crg}, $f(z)$ is of completely regular growth on the set of rays with argument $(0,2\pi]$.
    \end{proof}

\section{A dynamical aspect of  c.r.g. solutions of LDEs}\label{sec.exp-poly.type}
\subsection{A nested concept -- exponential polynomial type}
Observing  the two main Theorems \ref{Thm.main} and \ref{Thm.main.exp-poly}, the solutions of completely regular growth have asymptotic expressions 
\begin{equation}\label{eq.aym.express}
	\sum_{j=0}^{n}e^{P_j(z)}z^{c_j} Q_j(z,\log z)
\end{equation}
in every sector except for at most finite many narrow sectors. Here $P_j(z)$ are at most different polynomials in $z^{1/p_j}$ for some integer $p_j$, $c_j$ is a complex number, and $Q_j$ are polynomials in $\log z$ whose coefficients have asymptotic forms
$$\sum_{k=0}^{\infty}\alpha_{j,k}z^{-\frac{k}{p}}.$$
The asymptotic solutions \eqref{eq.aym.express} are mainly controlled by one exponential polynomial term $\exp{P_j(z)}$ in any sector for some $j$. Therefore, we discover that the essence of the space of solutions of equation \eqref{expdiff.eq} does not have a big difference with its coefficients, the class of exponential polynomials. The following further result is deduced:

\begin{theorem}\label{Thm.coeff.sol}
	Let the coefficients $a_0,\ldots,a_{n-1}$ of the equation
	\begin{equation}\label{eq.coeff.sol}
		f^{(n)}+a_{n-1}f^{(n-1)}+\cdots+a_1f'+a_0f=0
	\end{equation} 
be functions in the class of solutions of \eqref{lde.exp-poly}. All finite order trancendental solutions of \eqref{eq.coeff.sol} are of c.r.g..
\end{theorem}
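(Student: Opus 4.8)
The plan is to bootstrap from the two main theorems. Since each coefficient $a_k$ ($0\le k\le n-1$) is a (tacitly finite order, so that Theorem~\ref{Thm.main.exp-poly} applies) transcendental entire solution of an equation of type \eqref{lde.exp-poly}, Theorems~\ref{Thm.main} and \ref{Thm.main.exp-poly} together with the discussion of \eqref{eq.aym.express} show that $a_k$ admits, in every sufficiently narrow sector outside a finite set of directions $\Xi_k$, an asymptotic representation $\sum_m e^{P_{k,m}(z)}z^{c_{k,m}}Q_{k,m}(z,\log z)$, where the $P_{k,m}$ are polynomials in $z^{1/p}$ for a common $p\in\mathbb{N}^*$, the $c_{k,m}\in\mathbb{C}$, and the $Q_{k,m}$ are polynomials in $\log z$ whose coefficients possess power-series asymptotic expansions in $z^{-1/p}$. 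First I would collect the finite set $\Xi=\bigcup_k\Xi_k$ together with the finitely many directions along which some $\Re(P_{k,m}-P_{l,m'})$ vanishes (the anti-Stokes directions of the exponents), obtaining a finite set $\Xi'$. Fix $\theta\in(0,2\pi]\setminus\Xi'$ and a narrow sector $\theta\le\arg z\le\theta+h$; on it each $a_k$ is asymptotic to a single term $e^{\widetilde P_k(z)}z^{c_k}R_k(z,\log z)$ and the exponents occurring are totally ordered in real part.

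Next I would repeat, essentially verbatim, the reduction in the proof of Theorem~\ref{Thm.main.exp-poly}. Dividing \eqref{eq.coeff.sol} by $f$, Gundersen's estimate \cite[Corollary 3]{Gundersen.deriv.} gives that the logarithmic derivatives $f^{(j)}/f$ are of polynomial growth off an $r$-set $E$ of finite linear measure. Grouping the terms $a_k f^{(k)}/f$ by the exponential $e^{\widetilde P_k}$ they carry and peeling off, step by step, the dominant exponential on the sector — after passing to the variable $w=z^{1/p}$ so that the $\widetilde P_k$ become genuine polynomials, invoking the convex hull / critical ray bookkeeping of \eqref{eq.exp-poly.coef.} and Lemma~\ref{lem.Jurgen.1} to translate narrow $w$-sectors back into finitely many narrow $z$-sectors — one is led to a linear differential equation for $f$ whose coefficients have honest asymptotic power-series expansions in the sector, i.e. an equation of type \eqref{eq.A.coeff.}. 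As under \eqref{eq.f'/f.o(1)}, one checks that the ``$o(1)$'' remainders generated by the peeling are in fact $o(z^{-N})$ for every $N$, hence absorbable into the asymptotic coefficients.

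Then Lemma~\ref{lem.stein.modify} applies and yields $\log|f(z)|=\Re Q_\theta(z^{1/N_\theta})+O(\log|z|)$ as $z\to\infty$ in $\theta\le\arg z\le\theta+h$, outside the countable disk family with counting function $O(\log r)$ and outside a logarithmic semi-strip present only when $\arg z=\theta$ is a Stokes ray of the reduced equation. Thus $f$ is of completely regular growth on the ray $\arg z=\theta$ for every $\theta$ outside the union of $\Xi'$ with the finitely many Stokes directions; this exceptional set is finite, its complement is everywhere dense, and Lemma~\ref{lem.levin.lim.crg} (in the form of the remark following it) gives that $f$ is of completely regular growth on the whole plane.

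\emph{Main obstacle.} The genuinely new feature compared with Theorem~\ref{Thm.main.exp-poly} is that the exponentials carried by the coefficients have exponents that are polynomials in the fractional power $z^{1/p}$ rather than in $z$, so the multi-layer ``fundamental coefficient differential equation'' peeling must be carried out in the variable $w=z^{1/p}$ while all growth and measure statements are required in $r=|z|$. Keeping the two families of sectors (and their exceptional $r$-sets) compatible, and verifying that the total number of bad directions in $z$ remains finite after pulling the $w$-picture back through $z\mapsto z^{1/p}$, is the step that demands care; the rest is a transcription of the arguments already given in Sections~\ref{Sec.asym} and \ref{Sec.proof}.
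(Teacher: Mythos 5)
Your proposal is correct and follows essentially the same route as the paper: both pass to the variable $t=z^{1/p}$ (with $p$ a common denominator of the fractional exponents) so that the coefficients acquire the quasi-exponential-sum form \eqref{asym.bj}, then rerun the convex-hull/peeling argument of Theorem \ref{Thm.main.exp-poly} to reach an equation of type \eqref{eq.A.coeff.} on narrow sectors, and finish with Lemma \ref{lem.stein.modify} and Lemma \ref{lem.levin.lim.crg}. The only detail the paper makes explicit that you gloss over is that the exponents now carry an extra $O(\log r)$ perturbation (from the $t^{pc_j}$ and $\log t$ factors), which the paper absorbs by letting $\varepsilon$ shrink like $\arcsin\bigl((2C\log r)/(r|\lambda_l-\omega_m|)\bigr)$ in the quotient estimates; for a fixed sector this is harmless, so it does not affect correctness.
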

\begin{proof}
	The main idea to prove this theorem is similar to Theorem \eqref{Thm.main.exp-poly}.
	Firstly, we know the influential factor of $p$ is the number of sums of critical rays by Lemma \ref{thm.Wasow}, so the quantity of $p$ for all solutions in angular domains is limited. Owing to it being an integer, we can find the lowest common multiple and also denote it as $p$. Then, we transform \eqref{eq.coeff.sol} with $t=z^{1/p}$ to an equation in $t$
	\begin{equation}\label{eq.coeff.sol.g}
			{g}^{(n)}+b_{n-1}(t)g^{(n-1)}+\cdots+b_1(t)g'+b_0(t)g=0,
	\end{equation}
    with $f(z)=g(t),a_j(z)=b_j(t),j=1,\ldots,n-1.$ So 
    $$b_j(t)=	\sum_{j=0}^{n} e^{P_j(t)}t^{pc_j} Q_j(t,p\log t),$$
    where $Q_j$ is a polynomial in $\log z$ over the field of formal series
    $\sum_{k=0}^{\infty}\alpha_{j,k}z^{-{k}}.$ Given a $\theta$, there exists a sector $S:|\arg z-\theta|<h$ 
    \begin{equation*}
    	\begin{split}
    			b_j(t)&=	\sum_{j=0}^{n} e^{P_j(t)}t^{pc_j} \sum_{m=0}^{d}p^m\left(\sum_{k=0}^{\infty}\alpha_{j,k,m}t^{-k}\right)\log^m t\\
    			&=\sum_{j=0}^{n} e^{P_j(t)}t^{pc_j} \sum_{m=0}^{d}\left(\hat{Q}_{j,m}(t)\log^mt+o\left(\frac{{|\log t|}^m}{{|t|}^N}\right)\right),
    	\end{split}
    \end{equation*}
    where $|t|^{N}\hat{Q}_{j,m}$ are polynomials of degree $N$, for some $N\in\mathbb{N}$. Accordingly, 
    \begin{equation}\label{asym.bj}
    	b_j(t)=\sum_{j=0}^{n} e^{P_j(t)+O(\log |t|)}R_j(t)(1+o(1)),\quad t\rightarrow\infty. 
    \end{equation} Here, $R_j(t)$ are rational functions with only pole 0.

    Further, if we take $b_j$ as the coefficients in equation \eqref{lde.exp-poly}, we can also construct coefficient differential equations, by the method of Theorem \ref{Thm.main.exp-poly}, successively construct the fundamental coefficient differential equations in sectors. Only the last recursive process will be taken into consideration. Namely,  
    $$
    	H_{j_1,j_{2},\cdots,j_{s-1}}(t_\mu)=\sum_{j_s=1}^{k_{j_1,\ldots,j_{s-1}}}e^{q_{j_1,j_2,\cdots,j_s}t+O(\log t)}\cdot\sum_{m=0}^{n}\hat{R}_{j_1,j_{2},\cdots,j_s,m}(t)(1+o(1))\frac{g^{(m)}}{g}=o\left(e^{ar_\mu}\right),
    $$
    where $a\in\mathbb{R}$, $q_{j_1,j_2,\cdots,j_s}$ are some complex numbers, $\hat{R}_{j_1,j_{2},\cdots,j_s,m}$ are rational functions with only pole 0, and $r_\mu=|t_\mu|$. Without generality, we simplify this equation as
    \begin{equation}\label{eq.H.fund.coeff.}
    	\sum_{l=1}^{K}e^{c_{l}t+O(\log |t|)}\cdot\sum_{m=0}^{n}\hat{R}_{l,j}(t)(1+o(1))\frac{g^{(j)}}{g}=o\left(e^{ar}\right), \quad r=|t|,\enspace K\in\mathbb{N},\enspace\in\mathbb{C}.
    \end{equation}
    Set
    \begin{equation*}
    	\tilde\kappa_{l,m}(t):=\left|\frac{e^{\lambda_lt+O(\log |t|)}\left(\sum\limits_{j=0}^{n}\hat{R}_{l,j}\dfrac{g^{(j)}}{g}\right)}{e^{\omega_mt+O(\log |t|)}\left(\sum\limits_{j=0}^{n}\hat{R}_{m,j}\dfrac{g^{(j)}}{g}\right)}\right|=e^{\Re{(\lambda_l-\omega_m)}t+O(\log r)}\left|\frac{\sum\limits_{j=0}^{n}\hat{R}_{l,j}\dfrac{g^{(j)}}{g}}{\sum\limits_{j=0}^{n}\hat{R}_{m,j}\dfrac{g^{(j)}}{g}}\right|.
    \end{equation*}
    	From lemma \eqref{lem.convex.Sj},  there exists $\varepsilon=\arcsin\left(\left(2C\log r\right)/\left(r|\lambda_l-\omega_m|\right)\right)$ satisfies
    $$\Re((\lambda_l-\omega_m)t)+C\log r\leq-|t|\cdot|\lambda_l-\omega_m|\cdot|\sin\varepsilon|+C\log r<0,
   $$
   for $t\in S_m(\varepsilon)$, every $C>0$, $r\geq r_0$. Thus, the remanent proof is the same with the sections in \ref{Thm.main} and \ref{Thm.main.exp-poly}.
    The part relevant to denote the angular domains is more complicated, which is omitted here. Besides some areas near the critical rays, and parabolic strips, it is always established the fundamental coefficient differential equations are in the form of
    \begin{equation}\label{eq.fund.coeff.de}
    	\gamma_n(t)g^{(n)}+\gamma_{n-1}(t)g^{(n-1)}+\cdots+\gamma_0(t)g=0,
    \end{equation}
where $\gamma_j(t)$ has an asymptotic expansion
$$
\gamma_j(t)=t^{n_j}\sum_{k=0}^{\infty}\frac{1}{t^{k}}, t\rightarrow\infty.
$$
 By the similar proof of Theorem \ref{Thm.main} with Lemma \ref{lem.stein.modify} (when Lemma \ref{lem.stein.modify} lose effect for infinite many zeros of $f$ in each sector, we take the same discussion with \eqref{eq.log|f|.asy} into account, which is abbreviated in this part), it follows
\begin{equation}\label{eq.final.log|f|.general}
	\log|g|=\Re G+O(\log|t|),
\end{equation} 
for every narrow sector, except for some areas constructed by the critical rays, stokes rays and discussed above. Here $G$ is a polynomial in $t^{1/p_0},p_0\in\mathbb{N^*}.$

Considering the indicator function of $f$ by \eqref{eq.final.log|f|.general}, $t=z^p$, $f(z)=g(t)$ and 
Lemma \ref{lem.levin.lim.crg}, $f$ is of finite rational order and completely regular growth.
\end{proof}

\bigskip

We set the Class of finite order transcendental solutions of \eqref{lde.exp-poly} in Theorem \ref{Thm.main.exp-poly}, and  \eqref{eq.coeff.sol} in Theorem \ref{Thm.coeff.sol} as $\mathcal{EP}^0$, $\mathcal{EP}^1$ individually. On this point of view, if we take the solutions of equation \eqref{eq.coeff.sol} as coefficients of a linear differential again, then we will get the same conclusion the finite order transcendental  solutions are of c.r.g. in a class defined as $\mathcal{EP}^2$ and the orders are rational. As if we do this progress recursively, even if there may exist new finite order transcendental functions, but every asymptotic form in a narrow sector with an open angle $\varepsilon$ is just the same form as 
\begin{equation}\label{asym.log|f|}
\log|f|=\Re P(z)+O(\log|z|)	
\end{equation}
with $P(z)$ a polynomial in $z^{1/p}$ for some integer $p$.
We name the class of functions, which are in $\bigcup_{k=0}^\infty\mathcal{EP}^k$, by repeatedly putting the class of solutions of former equations to the coefficients of later equations, as of \emph{exponential polynomial type}.   Thus, it is not hard to get this result below

\begin{corollary}
	\label{cor.coeff.sol}
		Set the coefficients $a_0,\ldots,a_{n-1}$ of the equation
		\begin{equation}\label{eq.coeff.exp-poly type}
			f^{(n)}+a_{n-1}f^{(n-1)}+\cdots+a_1f'+a_0f=0
		\end{equation} 
		be functions of exponential polynomial type. All finite order transcendental solutions of \eqref{eq.coeff.exp-poly type} are of c.r.g.. In other words, functions of exponential polynomial type are of c.r.g..
\end{corollary}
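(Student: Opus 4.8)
The plan is to derive the corollary from Theorem~\ref{Thm.coeff.sol} by induction on the nesting depth, with Theorem~\ref{Thm.main.exp-poly} as the base case. Recall that $\mathcal{EP}^0$ is the class of finite order transcendental solutions of \eqref{lde.exp-poly}, that $\mathcal{EP}^{k+1}$ is the class of finite order transcendental solutions of linear differential equations whose coefficients lie in $\mathcal{EP}^k$, and that the functions of exponential polynomial type are the members of $\bigcup_{k\ge 0}\mathcal{EP}^k$. I would isolate the following property of an entire function $g$ of finite order, to be called $(\ast)$: for every $\theta$ outside a finite set of exceptional directions, and for $z\to\infty$ in $\theta\le\arg z\le\theta+h$ (some $h>0$) avoiding an $r$-set of finite linear measure and at most one logarithmic semi-strip along $\arg z=\theta$, one has $\log|g(z)|=\Re P_\theta(z^{1/p})+O(\log|z|)$ for some polynomial $P_\theta$ in $z^{1/p}$, $p\in\mathbb{N^*}$. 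Theorem~\ref{Thm.main.exp-poly} together with its concluding Note says exactly that every $g\in\mathcal{EP}^0$ satisfies $(\ast)$; this is the base case.

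For the inductive step, assume every $g\in\mathcal{EP}^k$ satisfies $(\ast)$ and let $f$ be a finite order transcendental solution of $f^{(n)}+a_{n-1}f^{(n-1)}+\cdots+a_0f=0$ with $a_0,\dots,a_{n-1}\in\mathcal{EP}^k$. The key point is that the proof of Theorem~\ref{Thm.coeff.sol} uses nothing about the coefficients beyond $(\ast)$: after the substitution $t=z^{1/p}$, with $p$ a common multiple of the finitely many branch orders that occur, each coefficient acquires in every narrow sector --- off finitely many directions --- an asymptotic representation of the shape \eqref{asym.bj}, i.e. a finite sum $\sum_j e^{P_j(t)+O(\log|t|)}R_j(t)(1+o(1))$ with $P_j$ polynomials and $R_j$ rational with pole only at $0$. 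I would then re-run the recursive construction of Theorem~\ref{Thm.main.exp-poly}: separate the dominant exponential term over each edge of the successive convex hulls, discard the subdominant terms off the critical rays by Lemma~\ref{lem.convex.Sj}, lift the parabolic strips back to the $z$-plane by Lemma~\ref{lem.Jurgen.1}, and thus reach in each narrow sector --- outside finitely many critical rays, Stokes rays and parabolic-strip axes --- a fundamental coefficient differential equation of the form \eqref{eq.fund.coeff.de} whose coefficients have power-type asymptotics at $\infty$. Lemma~\ref{lem.stein.modify} then gives $\log|f(z)|=\Re G_\theta(z^{1/N})+O(\log|z|)$ in that sector, so $f$ again satisfies $(\ast)$. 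Hence $(\ast)$ propagates to $\mathcal{EP}^{k+1}$, and by induction to every function of exponential polynomial type.

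Finally I would convert $(\ast)$ into completely regular growth exactly as at the end of the proofs of Theorems~\ref{Thm.main} and \ref{Thm.main.exp-poly}: $(\ast)$ says that $f$ is of completely regular growth on every ray $\arg z=\theta$ outside a finite set, hence on an everywhere dense set of rays. By Lemma~\ref{lem.levin.lim.crg} and the remark following it, $f$ is then of completely regular growth on the whole plane, and since $G_\theta$ is a polynomial in $z^{1/N}$ the order $\rho(f)$ is rational. This yields both assertions of the corollary: all finite order transcendental solutions of \eqref{eq.coeff.exp-poly type} are of c.r.g., and in particular every function of exponential polynomial type is of c.r.g.

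The main obstacle is organizational rather than analytic: one must check that the recursion does not spawn an infinite family of exceptional directions. Concretely, at each of the finitely many nesting levels the construction adds only finitely many critical rays, Stokes rays and parabolic-strip axes, and the branch order and the number of sides of all convex hulls entering the step remain bounded by the data of that equation; it should be verified that these bounds compose across the finite nesting, so that the set on which $(\ast)$ could fail stays a finite union of rays together with an $r$-set of finite linear measure. Once this uniform finiteness is granted, the density argument through Lemma~\ref{lem.levin.lim.crg} closes the proof, the genuine analytic content being entirely contained in Theorems~\ref{Thm.main.exp-poly} and \ref{Thm.coeff.sol}.
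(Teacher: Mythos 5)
Your proposal is correct and follows essentially the same route as the paper: the paper also treats the corollary as an induction on the nesting depth $k$ of $\mathcal{EP}^k$, with the invariant being precisely your property $(\ast)$ (the asymptotic form \eqref{asym.log|f|} in narrow sectors off finitely many exceptional rays), propagated by re-running the argument of Theorem~\ref{Thm.coeff.sol} at each level and concluded via Lemma~\ref{lem.levin.lim.crg}. Your explicit flagging of the need to verify that the exceptional directions stay finite across the recursion is a point the paper passes over in silence, so your write-up is if anything slightly more careful.
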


This corollary reveals a fundamental closure property: the class of functions of exponential polynomial type, defined inductively as $\bigcup_{k=0}^{\infty} \mathcal{EP}^k$, constitutes a vast and coherent hierarchy in which the property of completely regular growth is \emph{hereditary}. Starting from the classical exponential polynomials ($\mathcal{EP}^0$), each step $\mathcal{EP}^{k} \to \mathcal{EP}^{k+1}$ corresponds to taking all finite-order transcendental solutions of linear differential equations whose coefficients lie in $\mathcal{EP}^{k}$. Our main theorems guarantee that this process never leaves the world of completely regular growth.

While the inductive definition suggests a potentially expanding hierarchy, the strict inclusion $\mathcal{EP}^{k} \subsetneq \mathcal{EP}^{k+1}$
is not established rigorously here. Perturbative constructions, such as those alluded to in later sections, might generate new functions from $\mathcal{EP}^{0}$ to  $\mathcal{EP}^{1}$  for an example, hinting that the classes are indeed proper extensions. However, a full verification would require explicit examples and lies beyond the immediate scope. This hierarchical perspective naturally raises deeper questions about the scope and limits of the class of exponential polynomial type, especially in relation to the broader theory of completely regular growth.

\subsection{An Operator-Theoretic Perspective and Parameter Space Analysis}
\label{sec:operator-framework}

The classical question of existence and regularity of finite-order solutions to linear differential equations can be reframed fruitfully within the framework of \emph{operator theory on function spaces}. This perspective not only clarifies the underlying mechanisms of growth inheritance but also opens pathways to construct new classes of solutions and to analyze the structure of the parameter space in perturbed equations.

The asymptotic theory of linear differential equations naturally leads to integral formulations. Consider a homogeneous linear differential equation
\[
f^{(n)} + P_1 f^{(n-1)} + \cdots + P_{n-1} f' + P_n f = 0.
\]
By introducing suitable approximating functions $Q_i$ and defining the non-homogeneous term
\[
F(f) = (Q_1 - P_1)f^{(n-1)} + \cdots + (Q_n - P_n)f,
\]
the equation can be rewritten as
\[
Lf := f^{(n)} + Q_1 f^{(n-1)} + \cdots + Q_n f = F(f).
\]
Using the method of variation of constants, the general solution takes the form
\[
f(z) = \sum_{i=1}^n c_i F_i(z) + \sum_{i=1}^n F_i(z) \int_{g_i}^z \frac{\Delta_i(t)}{\Delta(t)} F(f(t))  dt,
\]
where $\{F_i\}$ are linearly independent solutions of $Lf=0$, $\Delta$ is the Wronskian, and $\Delta_i$ are corresponding cofactors \cite[p.~128]{Sternberg}. This representation is a \emph{Volterra integral equation of the second kind}:
\begin{equation}\label{eq.volterra-intro}
	f(z) = f_{\text{hom}}(z) + \int_0^z K(z,t) f(t)  dt,
\end{equation}
where $f_{\text{hom}}$ is a particular solution of the homogeneous equation $Lf=0$ and $K$ is a kernel derived from the coefficients. The associated affine integral operator
\begin{equation}\label{operator.T.intro}
	(Tf)(z) = \tilde f(z) + \int_0^z K(z,t) f(t)  dt
\end{equation}
becomes the central object of study.

To control growth, we work in \emph{weighted Banach spaces of entire functions}. For a continuous, non-increasing weight $v: [0, \infty) \to [0, \infty)$ with $\lim_{r\to\infty} r^m v(r)=0$ for all $m\in\mathbb{N}$, define
\[
\mathcal{H}_v^\infty(\mathbb{C}) := \{ f \in \mathcal{H}(\mathbb{C}) : \|f\|_v := \sup_{z\in\mathbb{C}} v(|z|)|f(z)| < \infty \}.
\]
Of particular importance are the exponential weights $v(r)=e^{-a r^\sigma}$ ($a>0$, $\sigma>0$), which give spaces
\[
\mathcal{B}_{\sigma,a} = \left\{ f \in \mathcal{H}(\mathbb{C}) : \|f\|_{\sigma,a} = \sup_{z\in\mathbb{C}} |f(z)| e^{-a|z|^\sigma} < \infty \right\}.
\]
These spaces are closely related to generalized weighted Bergman spaces of entire functions; see, e.g., \cite{BBF,Bonet09,Galbis,Garling-W,Lusky95,Lusky00}. An entire function $f$ belongs to $\mathcal{B}_{\sigma,a}$ if its growth order $\rho(f)$ satisfies $\rho<\sigma$, or $\rho=\sigma$ with type $\tau < a$.

Within this framework, the inheritance of completely regular growth (c.r.g.) for equations with exponential polynomial coefficients can be understood as a consequence of the \emph{compression properties} of the associated Volterra operator on these weighted spaces. The structure of the coefficients ensures the integral kernel preserves the geometric constraints needed for the operator to act as a contraction, forcing the iterative scheme to converge to a well-behaved (c.r.g.) solution.

The integral operator formulation offers a potential pathway for constructing solutions and analyzing their growth. Consider a perturbed equation of the form
\begin{equation}\label{eq.intro.nonhomo-term}
	L(f) = \varepsilon F(f),
\end{equation}
where $L$ and $F$ are differential polynomials with polynomial coefficients and exponential polynomial coefficients seperately, and $\deg L > \deg F$. Under this framework, the operator $T$ in \eqref{operator.T.intro} can be formally decomposed as $Tf = f_{\text{hom}} + \varepsilon T' f$. A natural line of inquiry is to investigate the conditions under which $T'$ acts boundedly on a suitably chosen weighted Bergman space $\mathcal{B}_{\sigma,a}$. If such boundedness can be established, then for sufficiently small $|\varepsilon|$, the operator $\varepsilon T'$ might become a contraction, leading to a unique fixed point of $T$ via the Banach fixed-point theorem. This fixed point would then correspond to a solution of the differential equation with controlled growth properties.

This perspective, while promising, immediately raises several non-trivial questions and highlights the delicate interplay between the operator's properties, the choice of function space, and the perturbation parameter:

\begin{enumerate}
	\item \textbf{Feasibility of the boundedness condition.} Under what precise conditions on the coefficients of $L$ and $F$ is the associated operator $T'$ bounded on a space $\mathcal{B}_{\sigma,a}$? The answer is not automatic and depends crucially on the structure of the integral kernel derived from the differential equation.
	
	\item \textbf{Geometry of the valid parameter set.} If a boundedness condition holds, what is the nature of the set of $\varepsilon$ for which the fixed-point argument succeeds? Is it necessarily a disk, or could it exhibit a more complex structure (e.g., sectors or petals) due to asymptotic dependencies on $\arg\varepsilon$, akin to Stokes phenomena \cite[Chap.~7]{Wasow}?
	
	\item \textbf{Global existence beyond small perturbations.} Can the fixed-point approach be extended beyond the regime of small $|\varepsilon|$? For large $|\varepsilon|$, the operator may no longer be contractive, and the existence of finite-order solutions becomes a much more subtle problem, potentially connected to the spectral and dynamical properties of $T'$.
\end{enumerate}

These questions underscore that the operator-theoretic reframing is not a ready-made theorem but a conceptual lens that transforms classical questions about growth inheritance and solution structure into concrete problems in functional analysis. The remainder of this work explores one concrete manifestation of this perspective, establishing the inheritance of completely regular growth for equations with exponential polynomial coefficients. The broader questions above remain open and provide a rich direction for future research.

The operator perspective also sheds light on the relationship between growth orders and the dynamical properties of $T$. When $|\varepsilon|$ is small, $T$ is a contraction on a closed subspace of $\mathcal{B}_{\sigma,a}$ determined by the initial conditions, guaranteeing a unique finite-order fixed point. As $|\varepsilon|$ increases, the contraction property may break down. The operator can, in principle, exhibit more complex dynamics such as hypercyclicity or chaos (i.e., the existence of a dense orbit and a dense set of periodic points) on the function space \cite{Beltran,BBF,Bonet09}.


\bigskip
In summary, recasting the HITW problem within the operator-theoretic framework does more than provide an alternative proof; it unveils a deep connection between the asymptotic theory of differential equations, the dynamics of linear operators on function spaces, and the geometry of parameter spaces. This viewpoint not only clarifies the inheritance of completely regular growth but also opens new avenues for constructing exotic solutions and understanding the global landscape of perturbed linear differential equations.

\section{Appendix: A modified Wasow Theorem on sectors with exceptional sets}\label{Sec.Wasow.top}
We use the definition of the asymptotic symbol ``$\sim$" in a point-set $T$ near 0 given by Wasow, see \cite[p.~32]{Wasow}.
\begin{defin}\label{def.asym}
	Let the function $g(x)$ be defined in a point-set $T$ of the complex $x$-plane having $x=0$ as an accumulation point. The power series 
	$\sum_{j=0}^{\infty}a_jx^j$
	is said to represent $g(x)$ asymptotically, as $x\to0$ in $T$, if 
	$$
	x^{-m}\left[g(x)-\sum_{j=0}^{m}a_jx^j\right]
	$$
	tend to zero, for all $m\geq 0$, as $x$ tends to zero in $T$.
\end{defin}

Our goal is to weaken the conditions for Wasow’s asymptotic conclusions so that the differentiating-operation can be performed on a point-set $T$, rather than necessarily on a complete sector. However, it is not always true, if we give no limitation on this set. For example, if $T$ is only a real axis,  then the function $f(x)=e^{-x}\sin(e^{-1/x})$ cannot be termwised differentiated. See \cite[p.~38]{Wasow}. 

Without generality, we write sectors $T$ as $$T:\{x~|~\theta_1\leq\arg x\leq\theta_2,\theta_2>\theta_1,0<|x|<x_0,x_0>0\}$$
and $\tilde T$ as
$$\tilde T:\{x~|~\theta_1<\tilde{\theta}_1\leq\arg x\leq\tilde{\theta}_2<\theta_2,\tilde{\theta}_2>\tilde{\theta}_1,0<|x|<x_0,x_0>0\},$$
so $\tilde{T}\subset T$. 
We consider a subset $T^*$ of $T$
with two properties:
\begin{enumerate}
	\item
	$T^*\cup\{0\}$ is path connected;
	\item
	$T\backslash T^*$ only has countably many closed connected components $\{D_j\}$.
	\item 
	(Uniform interior.) There exists a $T_0$ such that $T_0\subset T^*$, and 	$T\backslash T_0$ only has countably many closed connected components $\{E_j\}$ with $D_j\subset E_j$ and 
	$\mathrm{dist}(z,\partial D_j)\geq\alpha |z|$ for a constant $\alpha>0$ and uniformly for all sufficiently small $z\in T_0$.
\end{enumerate}

$T^*$ is defined by the reason below.

 Suppose $g(x)$ is holomorphic in $T^*$. We know that the asymptotic form of $f(x)$ is term-by-term integrable when the integration path is in $T$, see \cite[Theorem 8.7]{Wasow}.

The set $T^*$
generalizes a sector by allowing the removal of countably many closed connected components (the ``holes" $D 
_j$). However, to retain the ability to differentiate asymptotic expansions termwise, we must ensure that near each point of $T^*$
there exists a disk of radius proportional to the distance to the origin that stays entirely within $T^*$. This is guaranteed by the third property, which posits the existence of a subset $T_0\subset T^*$
such that every point of $T_0$
maintains a distance at least $\alpha|x|$ from the boundary of any hole. This uniform interior condition allows us to apply Cauchy estimates exactly as in Wasow's original proof. Consequently, we obtain the following lemma, which extends \cite[Theorem 8.8]{Wasow} to sectors with holes.

\begin{lemma}\label{lem.term-diff}
	If $f(x)$ is holomorphic in a set $T^*$ with properties defined above, and if 
	$$
	f(x)\sim\sum_{j=0}^{\infty}a_jx^j, ~~x\in T^*,
	$$
	then
	$$
	f'(x)\sim\sum_{j=1}^{\infty}ja_jx^{j-1}
	$$
	in $\tilde{T}\cap T_0$, for every proper subsector $\tilde{T}\subset T$. 
\end{lemma}

Similarly, when we consider the asymptotic form of $f(z)$ near $\infty$, by taking $ z=1/x $ and $f(z)=g(x)$, we define sectors
$$
S:\{z~|~\theta_1\leq\arg z\leq\theta_2,\theta_2>\theta_1,|z|>r_0,r_0>0\},$$
and for a proper subsector $\tilde{S}$ of $S$ (with $\theta_1<\tilde{\theta}_1\leq\arg z\leq\tilde{\theta}_2<\theta_2$ and $|z|>r_0$), we have $\tilde{S}\subset S$.

We consider a subset $S^*$ of S with the following properties:

\begin{enumerate}
	
	\item
	
	$S^*\cup\{\infty\}$ is path connected;
	
	\item
	
	$S\setminus S^*$ consists of countably many closed connected components $\{D_j'\}$ (the ``holes'');
	
	\item
	(Uniform interior.)
	There exists a subset $S_0\subset S^*$ such that $S\setminus S_0$ consists of countably many closed connected components $\{E_j'\}$ with $D_j'\subset E_j'$ and

	$$\mathrm{dist}(z,\partial D_j')\geq\frac{\alpha}{|z|},\quad \text{for a constant }\alpha>0,$$
	
	uniformly for all sufficiently large $z\in S_0$.
	
\end{enumerate}

Roughly speaking, $S^*$ is $S $ with ``holes'', and $S_0$ is a subregion of $S^*$ that maintains a uniform distance from the holes, scaled appropriately near infinity.

\begin{corollary}\label{Cor.term-diff-inf}
	
	If $f(z)$ is holomorphic in a set $S^*$ with properties defined above, and if
		$$f(z)\sim\sum_{j=0}^{\infty}a_j z^{-j},\quad z\in S^*,$$	
	then
	$$f'(z)\sim\sum_{j=1}^{\infty}(-j)a_j z^{-j-1}$$
	in $\tilde{S}\cap S_0$, for every proper subsector $\tilde{S}\subset S$.
	
\end{corollary}

\begin{definition}
	Let \(G\) be a subset of the Riemann sphere \(\mathbb{C}\cup\{\infty\}\). 
	The \emph{topological hull} of \(G\), denoted by \(T(G)\), is defined as the union of \(G\) and all bounded components of its complement. 
	In other words, \(T(G)\) is the smallest simply connected set (in \(\mathbb{C}\cup\{\infty\}\)) that contains \(G\).
\end{definition}
A simple conclusion is that $T(S^*)=S$.

\bigskip

Considering the linear system
\begin{equation}\label{matrix.infinity.mod}
	z^{-d}\frac{d\bm Y}{dz}=\bm A(z)\bm Y,
\end{equation}
\begin{equation}\label{asym.A(x).mod}
	\bm A(z)\sim\sum_{j=0}^{\infty}\bm A_jz^{-j},\enspace z\in S^*.
\end{equation} 
We will give a modified result from Wasow's theorem, and illustrate that the results on $S$ and $S^*$ are kept consistent.
\begin{theorem}\label{thm.Wasow.mod}
	Suppose $S^*$, $\tilde S$ and $S_0$ be defined above. Let $\bm A(z)$ be an $n$-by-$n$ matrix function, which is holomorphic in $S^*$ and has the asymptotic form \eqref{asym.A(x)} in $S^*$.  Then, for any sufficiently narrow subsector $\tilde{S}\subset S$
	, the differential equation \eqref{matrix.infinity} possesses a fundamental matrix solution of the form
	\begin{equation}\label{matrix.sol.mod}
		\bm Y(z)=\hat{\bm Y}(z)z^{\bm G}e^{\bm Q(z)}
	\end{equation}
	in the region $\tilde{S}^*:=\tilde{S}\cap S_0$, where $\hat{\bm Y}(z)$ admits an asymptotic series in powers of $z^{-1/p}$ (for some positive integer $p$) as $z\to\infty$ in $\tilde{S}^*$, $\bm G$ is a constant matrix, and $\bm Q(z)$ is a diagonal matrix whose diagonal elements are polynomials in $z^{1/p}$.
\end{theorem}

\begin{proof}
	By the conditions of $S^*$ we set and Cor. \ref{Cor.term-diff-inf}, we have shown that the asymptotic form of the coefficient matrix $\bm A(z)$ of equation \eqref{asym.A(x).mod} is term-by-term integrable and term-by-term differentiable. We ensure basic operations on the elements in a matrix in the asymptotic sense. (Other operations can be found in \cite[Section 8.1]{Wasow}.)  
	
	We know that Wasow's Theorem \ref{thm.Wasow} on the description of the sector $S$ exists only for the Main Asymptotic Existence Theorem \cite[Theorem 12.1]{Wasow} and for dealing with the effect of the Stokes phenomenon on asymptotic solutions \cite[Theorem 15.3]{Wasow}.
	Here we will not repeat the other steps of the proof of Theorem \ref{thm.Wasow}, since  the only ideas of these steps are  classifications of the eigenvalues of the matrix $\bm A_0$ in \eqref{asym.A(x).mod}. We first  come to illustrate the improvements to the Main Asymptotic Existence Theorem.
	
	We introduce this theorem here below,
	\begin{lemma}\label{lem.Main A. E.}
		Let $S$ be an open sector of the complex $z$-plane with vertex at the origin and a positive central angle not exceeding $\pi/(d+1)$($d$ a nonnegative integer). Let $f(z,\bm y)$ be an $N$-dimensional vector of $z$ and the $N$-dimensional vector $\bm y$ with the following properties.
		\begin{enumerate}
			\item
			$f(z,\bm y)$ is a polynomial in the components $y_j$ of $\bm y$, $j=1,\ldots,N$, with coefficients that are holomorphic in $z$ in the region
			\begin{equation*}
				0<r_0\leq|z|<\infty,~~x\in S,~(r_0~\text{a constant}).
			\end{equation*}
			\item
			The coefficients of the polynomial $f(z,\bm y)$ have asymptotic series in power of $z^{-1}$, as $z\to\infty$ in $S$.
			\item
			If $f_j(z,\bm y)$ denotes the components of $f(z,\bm y)$ then all eigenvalues $\lambda_j$, $j=1,2,\ldots,N$ of the Jacobian matrix
			\begin{equation*}
				\left\{\lim\limits_{z\to\infty\atop z\in S}\left(\left.\frac{\partial f_j}{\partial y_k}\right|_{\bm y=\bm 0}\right)\right\}
			\end{equation*}
			are different from zero.
			\item
			The differential equation
			\begin{equation}\label{Wasow.12.11}
				z^{-d}\bm y'=f(x,\bm y)
			\end{equation}
			is formally satisfied by a power series of the form
			\begin{equation*}
				\sum_{j=1}^{\infty}\bm \alpha_jz^{-j}.
			\end{equation*}
		\end{enumerate}
		Then there exists, for sufficiently large $z$ in $S$, a solution $\bm y=\bm\phi(z)$ of \eqref{Wasow.12.11} such that, in every proper subsector $\tilde S$ of $S$, $$\bm\phi(z)\sim	\sum_{j=1}^{\infty}\bm \alpha_jz^{-j},~~z\to\infty.$$
	\end{lemma}
	
	\bigskip
	We claim that $S$ in the Lemma \ref{lem.Main A. E.} can be replaced by $S^*$, as well as the proper subsector $\tilde{S}$ of $S$ substituted by $\tilde{S}^*$ such that the topological hull  $T(\tilde{S}^*)=\tilde{S}$.
	
	We recognize the proof of Lemma \ref{lem.Main A. E.} can be simplified by the assumption that the eigenvalues 
	$\lambda_j,~j=1,2,\ldots,N$, are distinct. In this part, we will follow the idea of \cite[pp.~65-75]{Wasow}.
	When we set 
	\begin{equation}
		\bm a(z)=f(z,\bm0),~\bm{A}(z)=\left\{\left.\frac{\partial f_j}{\partial y_k}\right|_{\bm y=\bm 0}\right\},~g(z,\bm y)=f(z,\bm y)-a(z)-\bm A(z)\bm y,
	\end{equation}
	the differential equation \eqref{Wasow.12.11} becomes
	\begin{equation}\label{Wasow.14.3}
		z^{-d}\bm y'=\bm a(z)+\bm A(z)\bm y+g(x,\bm y).
	\end{equation}
	In \eqref{asym.A(x).mod}, without loss of generality we may assume that
	\begin{equation*}
		\bm A_0=\mathrm{diag}(\lambda_1,\lambda_2,\cdots,\lambda_N),
	\end{equation*}
	assured by a preliminary constant linear transformation of $\bm y$ with nonvanishing determinant.
	
	By \cite[Theorem 9.3]{Wasow}, there exists a vector function $\bm\phi(z)$ holomorphic in $S$ for $|z|>r_0$ such that
	\begin{equation}\label{eq.phi.S*}
		\bm\phi(z)\sim	\sum_{j=1}^{\infty}\bm \alpha_jz^{-j},~z\to\infty,~z\in S.
	\end{equation}
	It is obvious that \eqref{eq.phi.S*} holds on $S^*$, and
	we have guaranteed the termwise differentiability in \eqref{eq.phi.S*} on $S_0\subset S^*$, the uniform interior-point set with boundary distance $\alpha/|z|$ for all sufficiently large $z\in S_0$. 
	By the transformation
	\begin{equation*}
		\bm u=\bm y-\bm\phi(z),
	\end{equation*}
	\eqref{Wasow.14.3} is changed to
	\begin{equation}\label{Wasow.14.9}
		z^{-d}\bm u'=\bm b(z)+\bm B(z)\bm u+h(z,\bm u),
	\end{equation}
	where $\bm b(z)\sim 0$ for $z\to\infty,z\in S_0$,
	\begin{equation*}
		\lim\limits_{z\to\infty}\bm B(z)=\bm A_0,~z\in S_0,
	\end{equation*} 
	and $h(z,\bm u)$ is a polynomial in the components $u_j$ of $\bm u$ with coefficients that admit asymptotic power series in $z^{-1}$, as $z\to\infty$ in $S_0$, which also has no constant or linear terms in the $u_j$, $j=1,\ldots,N$. Therefore, the main work now is to show the differential equation \eqref{Wasow.14.9} admits a solution that is asymptotic to zero, as $z\to\infty$ in $S_0$.
	
	Further, equation \eqref{Wasow.14.9} can be modified as
	\begin{equation}\label{Wasow.14.11}
		z^{-d}\bm u'=\bm A_0\bm u+p(z,\bm u),
	\end{equation}
	where
	\begin{equation*}
		\bm p(z,\bm u)=\bm b(z)+(\bm B(z)-\bm A_0)\bm u+\bm h(z,\bm u).
	\end{equation*}
	Roughly speaking, here the idea is to prove that the function $p(z,\bm u)$ is much smaller than $\bm u$, for large $z$ and small $\bm u$, while $\bm b(z)$ is asymptotic to zero, $\bm{B}(z)-\bm A_0$ tends to zero, and $h(z,\bm u)$ contains no linear terms in $\bm u$. If $p(z,\bm u)$ equals to zero identically, then equation \eqref{Wasow.14.11} has the solution $\bm u\equiv\bm 0$.
	
	We will replace \eqref{Wasow.14.11} by an equivalent integral equation
	\begin{equation}\label{Wasow.14.13}
		\bm u(z)=\bm V(z)\bm V^{-1}(z_0)\bm k+\int_{z_0}^{z}\bm V(z)\bm V^{-1}(t)t^{d}\bm p(t,\bm u(t))dt.
	\end{equation}
	Here $\bm k$ is a certain constant vector, $z_0$
	is some fixed point, and $\bm V(z)$ is some fundamental matrix solution of the differential equation
	\begin{equation}\label{Wasow.14.14}
		\bm z^{-d}\bm V'=\bm A_0\bm V.
	\end{equation}
	Substituting a particular solution
	\begin{equation*}
		\bm V(z)=\exp\left[\frac{z^{d+1}}{d+1}\bm A_0\right]
	\end{equation*}
	into \eqref{Wasow.14.13} with slightly modifying the path of integration, 
	we have
	\begin{equation}\label{Wasow.14.16}
		\bm u(z)=\int_{\bm\Gamma^*(z)}\exp\left[\frac{z^{d+1}-t^{d+1}}{d+1}\bm A_0\right]t^{d}\bm p(t,\bm u(t))dt,
	\end{equation}
	where $\bm\Gamma^*(z)$ is a set of $N$ individual paths $\gamma_j(z)$ corresponding to the components $u_k$ of $\bm{u}$. It is known that $\bm u(z)$ in \eqref{Wasow.14.16} is independent of the choice of $z_0$ in \eqref{Wasow.14.13}.
	The abbreviation form of \eqref{Wasow.14.16} can be written as a functional equation
	\begin{equation}\label{Wasow.14.17}
		\bm u=\mathscr{P}\bm u,
	\end{equation}
	which the solution is asymptotic to zero by the classic method of successive approximations. Let $\bm u_j(z)$ be a sequence of vector functions for $j=0,1,\ldots$. The convergence of the sequence will be established by estimating the difference
	\begin{equation}\label{Wasow.14.18}
		\bm u_{j+1}-\bm u_j=\mathscr{P}\bm u_j-\mathscr{P}\bm u_{j-1},
	\end{equation}
	where
	$\bm u_0\equiv0$ and $\bm u_{j+1}=\mathscr{P}\bm u_j.$
	Then we should find a suitable set of path $\bm\Gamma^*(z)$.
	
	\bigskip
	The specific construction of the integration path can be found in \cite[Sec.~14.3]{Wasow}. Here, we provide a brief overview of Wasow's method, slightly modify some of the paths $\gamma_j(z)$, and explain that such modifications are necessary and do not alter the estimates of the continuous approximation method to get the solution of \eqref{Wasow.14.17} asymptotic to zero in $S_0$.
	
	We should make the exponential term in \eqref{Wasow.14.16} be bounded by choosing the paths $\gamma_j(z)$.
	By changing the variables,
	\begin{equation}\label{Wasow.14.20}
		\tau=t^{d+1}~\text{and}~\xi=z^{d+1},
	\end{equation}
	the image of $S_0$ and the sector $T(S_0)$ of the $t$-plane($z$-plane) individually become $\Sigma^*$ and a sector $T(\Sigma^*)$ in the $\tau$-plane($\xi$-plane) with central angle not exceeding $\pi$. We know there exist $N$ lines in the $\tau$-plane, consequently $2N$ rays denoted by $l_1,\ldots,l_{2N}$ sorted anticlockwise,  such that
	$$
	\Re(\tau\lambda_j)=0,
	$$ 
	for $j=1,\ldots,N$. We only consider the case inside the sector $$\Sigma_j=\{z~|\arg l_{j-1}<\arg z<\arg l_{j+1}\}, $$
	$j=1,\ldots,2N$, and the analysis in every $\Sigma_j$ are same. For the sake of simplicity, We set $\Sigma_j$ as $\Sigma_1$, so there exists only one $l_1\subset \Sigma_1$ such that $
	\Re(\tau\lambda_k)=0$ for $k=1,\ldots,N$.
	Let $\Sigma_1^*=\Sigma^*\cap \Sigma_1$ and the pre-image of $\Sigma_1^*$ and $\Sigma_1$ be $S_1^*$ and $S_1$ individually. Without generality, we can also treat $T(\Sigma_1^*)$ as a sector and maintain the property $T(\Sigma_1^*)=\Sigma_1$, which means that the ``holes'' in $\Sigma$ have no intersection with the sector $\Sigma_1$. Therefore, $T(S_1^*)=S_1$.
	
	Next, we arrange all the $\lambda_j$ such that $\Re(\tau\lambda_j)<0$ in $\Sigma_1^*$ for $j=1,\ldots,j_1-1$,  $\Re(\tau\lambda_{j_1})=0$ in $\Sigma_1^*$, and $\Re(\tau\lambda_j)>0$ in $\Sigma_1^*$ for $j=j_1+1,\ldots,N$. Let $\xi_1$ be some point on $\Sigma_1^*$
	and $r_0$ be a sufficient large constant, such that $|\xi_1|>r_0^{q+1}$, and denote by 	$\Sigma_1(\xi_1)$ the closed sector with vertex at $\xi_1$ and boundary rays parallel to those of $\Sigma_1$. Further, we set $$\Sigma_1^*(\xi_1)=\Sigma_1(\xi_1)\cap\Sigma_1^*.$$
	Due to the Hausdorff space's property on $\C$, we can choose $\xi_1$ sufficiently close to $\xi$ such that the vector $\vec{\xi_1\xi}$ satisfies
	\begin{equation*}
		\vec{\xi_1\xi}~\bigcap~\Sigma_1(\xi_1)\backslash\Sigma_1^*(\xi_1)=\emptyset.
	\end{equation*}
	Thus, for $j=1,\ldots,j_1-1$, we choose the common path by the preimage curve of $\vec{\xi_1\xi}$ in $S_1^*$ as integration path $\gamma_j(z)$ of $\bm \Gamma^*(z)$ in \eqref{Wasow.14.16}. We notice that the exponential term 
	in \eqref{Wasow.14.16} decreases along $\gamma_j(z)$.

	For $j\geq j_1$, we choose some ray $l_k$, along with $\Re(\tau\lambda_k)>0$
	from the origin into $\Sigma_1^*$. Let 
	$l^*_{k}$ be the ray from $\infty$ to the point $\zeta$ parallelling with $l_k$. 
	As same as the circumstance above, 
	we can choose $\xi_2$ on $l_k^*$ sufficiently close to $\xi$ such that the vector $\vec{\xi_2\xi}$ satisfies
	\begin{equation*}
		\vec{\xi_2\xi}~\bigcap~\Sigma_1(\xi_1)\backslash\Sigma_1^*(\xi_1)=\emptyset.
	\end{equation*}

	\begin{figure}[htbp]
		\centering
		\includegraphics[width=0.5\textwidth]{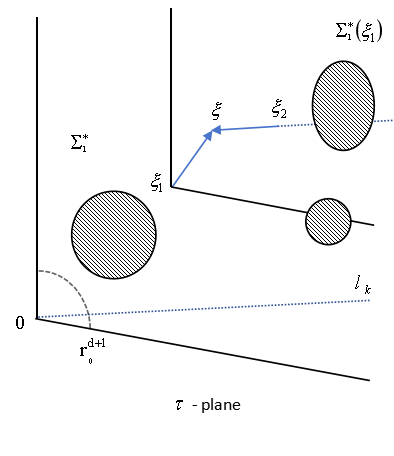}
		\caption{$\Sigma_1^*$ and $\Sigma_1^*(\xi_1)$ in $\tau$-plane}
		\label{fig:tau-plane}
	\end{figure}
	
	Thus, for $j=j_1,\ldots,N$,  we choose the common path by the preimage curve of $\vec{\xi_2\xi}$ in $S_1^*$ as integration path $\gamma_j(z)$ of $\bm \Gamma^*(z)$ in \eqref{Wasow.14.16}. We find that the exponential term 
	in \eqref{Wasow.14.16} also decreases along $\gamma_j(z)$.
	
	The pre-image $S_1(z_1)$ of $\Sigma_1(\xi_1)$ is a region in the $z$-plane bounded by two curves meeting at $z_1=\xi_1^{1/(d+1)}$ and asymptotical to the boundary rays 
	of $S_1$. Therefore, $S_1(z_1)$ contains all points at sufficiently large distance from the origin of any closed subsector of $S_1$. Contrast \cite[Figure~14.1]{Wasow}, we given a visualized construction in Figure \ref{fig:tau-plane} and Figure \ref{fig:t-plane}.

	\begin{figure}[htbp]
		\centering
		\includegraphics[width=0.5\textwidth]{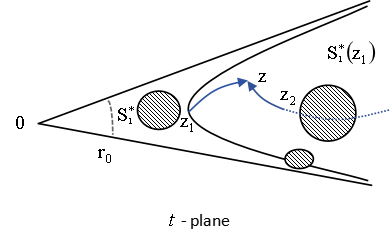}
		\caption{$S_1^*$ and $S_1^*(z_1)$ in $t$-plane}
		\label{fig:t-plane}
	\end{figure}

	We constructed the same integration path  $\gamma_j$ for $j<j_1$ as Wasow gave. For the other integral paths, $\gamma_j$ for $j_1\leq j\leq N$, we only changed the path from $\infty$ to a fixed point to be part of this path. This local integration path can be found at any inner point of $S_1^*(z_1)$, consequently at any inner point of $S_1^*$ when $|z|\to\infty$, further at  any inner point of $S_0$ when the  opening of topological hull $T(S_0)$ is not larger than $\pi/(d+1)$ and $|z|\to\infty$, by the arbitrary choice of $\Sigma_1^*$ in $\Sigma^*$. Therefore, it is established that
	\begin{equation}\label{Wasow.14.23}
		\Re\left[(z^{d+1}-t^{d+1})\lambda_j/(d+1)\right]\leq-\left|z^{d+1}-t^{d+1}\right|\lambda_0\mu/(d+1),
	\end{equation}
	where $\lambda_0=\min\limits_{j}|\lambda_j|$ and $\mu$ is a positive constant independent of $\lambda_0,j$ and $\xi_1,\xi_2$.

	\bigskip
	We use \eqref{Wasow.14.23} on the successive approximation to the estimation of \eqref{Wasow.14.18}. We won't elaborate on this part of the proof because its steps are basically same as those in  \cite[Sec.~14.4-14.5]{Wasow}. It is deduced that 
	\begin{equation*}
		\sum_{k=0}^{\infty}||\bm u_{j+1}-\bm u_j||
	\end{equation*}
	is dominated by a convergent geometric series in $S_0$. Namely,
	\begin{equation*}
		\lim\limits_{k=0}\bm u_k=\lim\limits_{k\to\infty}\sum_{s=0}^{k-1}(\bm u_{s+1}-\bm u_{s})
	\end{equation*}
	exists and is holomorphic for $z\in S_0$.
	Note that the only difference is the estimate on the component of $$||\bm u_{k+1}-\bm u_{k}||,~~k=0,1,2,\cdots,$$ where the component path $\gamma_j$ for $j\geq j_1$ is a curve from $z_2=\xi_2^{1/(d+1)}$ to $z$, which is a part of the curve from $\infty$ to $z$. In fact, combining \eqref{Wasow.14.16} we have given a more accurate estimate under every relation
	\begin{equation*}
		\begin{split}
			||\bm u_{k+1}-\bm u_{k}||&\leq\int_{\bm\Gamma^*(z)}e^{\Re\left[\frac{z^{d+1}-t^{d+1}}{d+1}\bm A_0\right]}\left|t^{d}\right| \big|\big|\bm p_{j+1}(t,\bm u_{j+1}(t))-\bm p_{j}(t,\bm u_j(t))\big|\big|~|dt|\\
			&<\int_{\bm\Gamma(z)}e^{\Re\left[\frac{z^{d+1}-t^{d+1}}{d+1}\bm A_0\right]}\left|t^{d}\right| \big|\big|\bm p_{j+1}(t,\bm u_{j+1}(t))-\bm p_{j}(t,\bm u_j(t))\big|\big|~|dt|.	
		\end{split}
	\end{equation*}
	Here $\bm\Gamma^*(z)$ is the set of all curves $\gamma_j(z) $ from $z_1$ to $z$, for $j<j_1$, and curves $\gamma_j(z) $ from $z_2$ to $z$, for $j_1\leq j\leq N$; $\bm\Gamma(z)$ is the set of all curves $\gamma_j(z) $ from $z_1$ to $z$, for $j<j_1$, and curves from $\infty$ to $z$ containing $\gamma_j(z)$, for $j_1\leq j\leq N$.
	
	\bigskip
	Now the functional equation \eqref{Wasow.14.17} has a solution $\bm u(z)\sim 0$ for $z\in S_0$ when the opening of sector $T(S_0)$ is not larger than $\pi/(d+1)$, see \cite[Eq.~14.40]{Wasow}. Then by
	$$
	\bm y=\bm u+\bm\phi(z),~z\in S_0,
	~\text{and}~~
		\bm y=\sum_{j=1}^{\infty}\bm \alpha_jz^{-j},~z\in S_0,
	$$
	we obtain that
	$$\bm\phi(z)\sim	\sum_{j=1}^{\infty}\bm \alpha_jz^{-j},~z\to\infty,~z\in S_0.$$

	\bigskip
	Combing \cite[Theorem 12.1]{Wasow} and the classification of eigenvalues of $\bm A_0$, see \cite[pp.~88-111]{Wasow}, the differential equation \eqref{matrix.infinity} possesses a fundamental matrix solution of the form
	\eqref{matrix.sol.mod}
	corresponding to every subset $\tilde{S}^*$ of $S^*$, with the properties of $\tilde{S}^*$:
	\begin{enumerate}
		\item
		The topological hull $T(\tilde{S}^*)=T(\tilde{S})$ is a 
		subsector of $T(S^*)$, with the opening of sector $T(S^*)$ not larger than $\pi/(d+1)$;
		\item
		$\tilde{S}^*\cup\{\infty\}$ is path connected;
		\item
		$\tilde{S}^*$ has uniform interior.
	\end{enumerate}
	Here $\hat{\bm Y}(z)$ permits an asymptotic series in power of $z^{-1/p},p\in\mathbb{N^*}$, in $\tilde{S}^*$ as $z\rightarrow\infty;$ $\bm G$ is a constant matrix, $\bm Q(z)$ is a diagonal matrix whose diagonal elements are polynomials in $z^{1/p}$.

	\bigskip
	The last step is continue the set $S^*$, when the opening of sector $T(S^*)$ is not larger than $\pi/(d+1)$, to a set $ S^*$ without any limitation of the angular measure of sector $T(S^*)$. 
	
	Due to \cite[Theorem~15.3]{Wasow} and the note there below, when some ray rotates in a sector $S$ around 0 anticlockwise (or clockwise) across a ray $l_0$ with $\bm y(z)$ having an extreme change of growth rate, where appears Stokes phenomenon (see \cite[Sec.~15.3]{Wasow}) separating $S$ into $S_1$ and $S_2$, the fundamental matrix solution $\bm Y_2(z)=\bm Y_1(z)\bm C$ with $\bm Y_2(z)$ for $z\in S_2$, $\bm Y_1(z)$ for $z\in S_1$, and $\bm C$ a constant matrix with entries 1 on the main diagonal. 
	
	On the one hand, the only factor to affect the opening factor is the appearence of Stokes rays, but it can be characterized by just one constant, which means $T(S^*)$ can be continued to any sector without opening's limitation. On the other hand, we can not unify the asymptotic form in the whole set $S^*\subset S$, but in subset $\tilde{S}^*$ contained by every extremely narrow subsector $\tilde{S}=T(\tilde{S}^*)$, since there is a constant matrix difference in the solution form across a Stokes ray. It is clear that the components of $T(S^*)\backslash S^*$, namely the ``holes'' in $S$, do not affect the Stokes constant $\bm C$. So Theorem \ref{thm.Wasow.mod} has been proved.
\end{proof}

    \section*{Acknowledgements}
  %
    %
    %
    %
  Thanks to my advisor, Professor Zhi-Tao Wen
  , for his expert guidance throughout this research, particularly for his insight on the theoretical framework.
    
    I am grateful to Professor Walter Bergweiler 
      for his valuable suggestions during my exchange program, especially regarding functions of completely regular growth and the mindset of holomorphic Dynamics.
    
    A preliminary version of this work was presented as a poster at the conference ‘Classical Function Theory in Modern Mathematics’ (Edinburgh, 07, 2024).
    
    My appreciation extends to the thesis defense committee, especially Yuqiu Zhao (SYSU) and Lun Zhang (FDU), for their rigorous evaluation and constructive feedback on the asymptotic analysis methodology on related topics.
    
  
  I sincerely thank the anonymous referees for their thorough and insightful reviews, which have greatly improved the quality of this work.
  
   \section*{Funding}
  The author acknowledges the financial support from the Shantou University PhD Student Overseas Exchange Program (2023).
  
    \section*{Note on references}
    We annotate literature across different fields:
    
    \textbf{[CADE]} Complex analysis and differential equations on the complex plane

    \textbf{[FAD]} Functional analysis and dynamics

     \textbf{[HA]} Harmonic analysis
    
    
      \textbf{[ISAA]} Integrable systems and asymptotic analysis

	\bigskip
	\noindent
	\emph{X.-Y.~Li}\\
	\textsc{Department of Mathematics, University of Manchester,\\
Oxford Road,
Manchester,
M13 9PL, United Kingdom}\\
	\texttt{e-mail:19xyli@alumni.stu.edu.cn}
	

\begin{thebibliography}{99}
	
		\bibitem{AO}
	 \textbf{[CADE]} Amemiya I. and Ozawa, M.,
		\emph{Non-existence of finite order solutions of $w''+e^{-z}w'+Q(z)w=0$}.
		Hokkaido Math.~J.~\textbf{10} (1981), Special Issue, 1--17.
		
			\bibitem{Beltran}
		\textbf{[FAD]}
		Beltr\'an, M. J.,  \emph{Dynamics of differentiation and integration operators on weighted spaces of entire functions.} Studia Math.1 (2014), 35-60.
		
		\bibitem{BBF}
		\textbf{[FAD]}
		Beltr\'an,	M. J.; Bonet,   J.; Fern\'andez, C.,  \emph{Classical operators on weighted Banach spaces of entire functions}. Proc. Amer. Math. Soc. 141 (2013), 4293–4303.
		
		
		\bibitem{Bergweiler}
	 \textbf{[CADE]} Bergweiler, W.,
		\emph{A question of Goldberg and Ostrovskii concerning linear differential equations with coefficients of completely regular growth}.
		 Proc. Amer. Math. Soc.151 (2023), no.5, 2097–2101.

		\bibitem{Bonet09}
		 \textbf{[FAD]}
	Bonet,	J.,  \emph{Dynamics of the differentiation operator on weighted spaces of entire functions}. Math. Z. 261,(2009) 649–657. 
		
		
		
		
		\bibitem{Dickson}
	 \textbf{[CADE]}  Dickson, D. G., \emph{Expansions in series of solutions of linear difference-differential and infinite order differential equations with constant coefficients}. Thesis (Ph.D.) – Columbia University. 1958.
		
		\bibitem{Jurgen}
 \textbf{[CADE]} 	Droletz,	J.,  \emph{\"Uber das Wachstum der L\"osungen linear-homogener Differentialgleichungen mit Exponentialpolynomen als Koeffizienten.} Thesis (Ph.D.) - University of Siegen, 1984.
		
		
		\bibitem{Fredholm}
		 \textbf{[CADE\&FAD]}
		Fredholm, E. I., \emph{Sur une classe d'equations fonctionnelles"}. Acta Mathematica, \textbf{27} 365–390. (1903)
		
		\bibitem{Frei0}
 \textbf{[CADE]} 	Frei, M.,
		\emph{\"Uber die L\"osungen linearer Differentialgleichungen mit ganzen Funktionen als Koeffizienten}.
		Comment.~Math.~Helv.~\textbf{35} (1961), 201--222. (German)
		
		
		
		
		\bibitem{Galbis}
		 \textbf{[FAD]}
	Galbis, A., \emph{Weighted Banach spaces of entire functions}. Arch. Math. (Basel) 62 (1994).
		
		\bibitem{Garling-W}
		 \textbf{[FAD]}
	Garling, D. J. H.; Wojtaszczyk, P., \emph{Some Bargmann Spaces of Analytic Functions}. Lecture Notes in Pure and Applied Math. ``Function Spaces. The second conference." (1995)

		\bibitem{Gary}
	 \textbf{[CADE]} 	Gundersen, G.~G.,
		\emph{On the question of whether $f''+e^{-z}f'+B(z)f=0$ can admit a
			solution $f\not\equiv 0$ of finite order}.
		Proc.~Roy.~Soc.~Edinburgh Sect.~A \textbf{102} (1986), no.~1--2, 9--17.
		
	
		
		\bibitem{Gundersen.deriv.}
		 \textbf{[CADE]} Gundersen, G. G., 
		\emph{Estimates for the logarithmic derivative of a meromorphic function, plus similar estimates.}
		J. London Math. Soc. (2) 37 (1988), no. 1, 88–104.
		
		\bibitem{Gundersen.possible-order}
	 \textbf{[CADE]} Gundersen, 	G.~G.; Steinbart, E.~M.  and Wang, S.~P., \emph{The possible orders of solutions of linear differential equations with polynomial coefficients}. Trans. Amer. Math. Soc. {\bf 350} (1998), no.~3, 1225--1247
		
		
		
	
		
		
			\bibitem{problembook}
	  \textbf{[CADE]}	Havin, V.~P. and Nikolski, N.~K.,
		\emph{Linear and Complex Analysis. Problem Book 3. Part II}.
		Lecture Notes in Mathematics, 1574. Springer-Verlag, Berlin, 1994.
	
	\bibitem{Hayman}
    \textbf{[CADE]} Hayman, W., \emph{Meromorphic functions.} Oxford Math. Monogr. Clarendon Press, Oxford, 1964.	

		\bibitem{GOP}
	 \textbf{[CADE]}	Heittokangas, J.; Ishizaki, K.; Tohge, K. and Wen, Z.-T.,
		\emph{Completely regular growth solutions of second order complex linear differential equations}. Ann.~Acad.~Sci.~Fenn.~\textbf{40} (2015), no.~2, 985--1003.
		
		\bibitem{HITW2018}
	 \textbf{[CADE]}	Heittokangas, J.; Ishizaki, K.; Tohge, K. and Wen, Z.-T.,
		\emph{Zero distribution and division results for exponential polynomials.} Israel J. Math.227 (2018), no.1, 397–421.
		
		
		\bibitem{HITW2022}
 \textbf{[CADE]}	Heittokangas, J.; Ishizaki, K.; Tohge, K. and Wen, Z.-T.,
	\emph{Value distribution of exponential polynomials
		and their role in the theories of complex
		differential equations and oscillation theory}.
Bull. Lond. Math. Soc. 55 (2023), no. 1, 1–77.

   \bibitem{Kato}
    \textbf{[FAD]}
    Kato, 
    T., \emph{Perturbation Theory for Linear Operators (2nd ed.)}. New York, NY: Springer-Verlag. (1980)

		\bibitem{Laine}
    \textbf{[CADE]}	Laine, I.,
	\emph{Nevanlinna Theory and Complex Differential Equations}.
	Walter de Gruyter \& Co., Berlin, 1993.
	
		
	\bibitem{Langley}
	 \textbf{[CADE]}
	Langley, J.~K.,
	\emph{On complex oscillation and a problem of Ozawa}.
	Kodai Math. J. \textbf{9} (1986), no. 3, 430-439.
	
	
		
		\bibitem{Levin1}
		 \textbf{[CADE]}
		Levin, B.~Ja.,
		\emph{Distribution of Zeros of Entire Functions}.
		Translated from the Russian by R.~P.~Boas, J.~M.~Danskin, F.~M.~Goodspeed, J.~Korevaar, A.~L.~Shields and H.~P.~Thielman.
		Revised edition. Translations of Mathematical Monographs, 5. American Mathematical Society, Providence, R.I., 1980.
	

	\bibitem{Li-Wang-Wen}
	 \textbf{[CADE]}
	Li, X.-Y.; Wang, J.; Wen. Z.-T., \emph{Representation of finite order solutions to linear differential equations with exponential sum coefficients}. Journal of Differential Equations,
	vol. 423, (2025), 118-132.

		\bibitem{Lusky95}
		 \textbf{[HA]}
		Lusky, W., \emph{On weighted spaces of harmonic and holomorphic functions.} J. Lond. Math. Soc. 51, 309–320
		(1995)
		
		\bibitem{Lusky00}
		 \textbf{[HA]}
		Lusky, W., \emph{On the Fourier series of unbounded harmonic functions.} J. Lond. Math. Soc. 61, 568–
		580 (2000)
		
		\bibitem{DLMF}
		\textbf{[ISAA]} 
	Olver, 	F. W. J.; Olde Daalhuis,  A. B.; Lozier, D. W.; Schneider, B. I.; Boisvert, R. F.; Clark, C. W.; Miller, B. R.; Saunders, B. V.; Cohl, H. S.;  McClain, M. A., \emph{NIST Digital Library of Mathematical Functions}. https://dlmf.nist.gov/, Release 1.2.4 of 2025-03-15. 
		
			\bibitem{Ozawa}
			 \textbf{[CADE]}
		Ozawa, M.,
		\emph{On a solution of $w''+e^{-z}w'+(az+b)w=0$.}
		Kodai Math. J. \textbf{3} (1980), no. 2, 295--309.
		
		
		\bibitem{Petru}
		 \textbf{[CADE]}
		Petrenko, V.~P.,
		\emph{Entire Curves}.
		Vishcha Shkola, Kharkov, 1984. (Russian)
		
		\bibitem{Polya20}
		 \textbf{[CADE]}
		P\'olya, G., \emph{Geometrisches über die Verteilung der Nullstellen spezieller
		ganzer Funktionen}.
	    Sitz.-Ber. Bayer. Akad. Wiss. (1920), 285-290.
	    
	    \bibitem{Polyanin}
	    \textbf{[ISAA]} 
	    Polyanin, P.; Manzhirov, A.V., \emph{Handbook of Integral Equations: Second Edition (2nd ed.)}. Chapman and Hall/CRC.(2008) 
	    
	    
		\bibitem{Ronkin}
		 \textbf{[CADE]}
		Ronkin, L.~I.,
		\emph{Functions of completely regular growth}.
		Translated from the Russian by A. Ronkin and I. Yedvabnik.
		Mathematics and its Applications (Soviet Series), 81.
		Kluwer Academic Publishers Group, Dordrecht, 1992.

\bibitem{Schwengeler}
 \textbf{[CADE]}
Schwengeler, E.,
 \emph{Geometrisches über die Verteilung der Nullstellen
	spezieller ganzer Funktionen}. Dissertation. Univ. Z\"urich, Zurich, 1925.

		\bibitem{Stein}
		 \textbf{[CADE]}
		Steinmetz, N.,
		\emph{Exceptional values of solutions of linear differential equations}.
		Math.Z. 201 (1989), no. 3, 317--326.
		
			\bibitem{stein.exp}
			 \textbf{[CADE]}
		Steinmetz, N., \emph{Zur Wertverteilung der Quotienten von Exponentialpolynomen}. Arch. Math. (Basel) 35 (1980),
		no. 5, 461–470 (German).
		
		\bibitem{Sternberg}
		\textbf{[CADE\&ISAA]} 
		 Sternberg, W., \emph{\"Uber die asymptotische Integration von Differentialgleiehungen.} Math. Ann. 81, 
	119-186 (1920) (German)
		
			\bibitem{VPT}
			 \textbf{[CADE]}
		Voorhoeve, M.; Van der Poorten, A. J. and Tijdeman, R.,
		\emph{On the number of zeros of certain functions}.
		Nederl. Akad. Wetensch. Proc. Ser. A 78 Indag. Math. \textbf{37}(5) (1975) 407--416.
		
		\bibitem{Wasow}
		\textbf{[CADE\&ISAA]} 
		Wasow, Wolfgang,
		\emph{Asymptotic expansions for ordinary differential equations.}
		Pure Appl. Math., Vol. XIV
		Interscience Publishers John Wiley \& Sons, Inc., New York-London-Sydney, 1965. ix+362 pp.

	\end{thebibliography}
\end{document}